\definecolor{darkgreen}{rgb}{0,0.5,0}
\definecolor{darkred}{rgb}{0.7,0,0}
\def\a{\alpha}
\def\b{\beta}
\def\({\left (}
\def\){\right )}
\def\<{\left\langle}
\def\>{\right\rangle}
\newtheorem{thm}{Theorem}[section]
\newtheorem{cor}[thm]{Corollary}
\newtheorem{lem}[thm]{Lemma}
\newtheorem{defn}[thm]{Definition}
\newtheorem{rem}[thm]{Remark}
\newtheorem*{rem*}{Remark}
\numberwithin{equation}{section}
\newcommand{\norm}[1]{\left\Vert#1\right\Vert}
\newcommand{\abs}[1]{\left\vert#1\right\vert}
\newcommand{\set}[1]{\left\{#1\right\}}
\newcommand{\Real}{\mathbb R}
\newcommand{\pfrac}[2]{\frac{\partial #1}{\partial #2}}
\newcommand{\osc}{\mbox{osc}}
\title[Expansion formula]
{Expansion formula for complex Monge-Amp\`ere equation along cone singularities}
\author{Hao Yin}
  \address{School of mathematical sciences, university of science and technology
of China, Hefei, 230026, China}
  \email[Hao Yin]{haoyin@ustc.edu.cn}
\author{Kai Zheng}
  \address{Mathematics Institute, University of Warwick, Coventry, CV4 7AL, UK}
  \email[Kai Zheng]{K.Zheng@warwick.ac.uk}
\date{\today}
\begin{document}

\maketitle
\begin{abstract}
In this paper, we prove the asymptotic expansion of the solutions to some singular complex Monge-Amp\`ere equation which arise naturally in the study of the conical K\"ahler-Einstein metric.	
\end{abstract}

\section{Introduction}\label{sec:intro}

Let $M$ be a closed K\"ahler manifold and $D$ be a smooth divisor in $M$. A K\"ahler metric $g$ is said to be of cone angle $2\pi \beta$ (with $0<\beta<1$) along $D$ if $g$ is smooth away from $D$ and for each $p\in D$, there is a holomorphic coordinates $\set{z_1,\cdots,z_n}$ around $p$ with $D=\set{z_1=0}$ such that $g$ is comparable to the standard cone metric
\begin{equation*}
	g_{cone}= \abs{z}_1^{2\beta-2} dz_1d\bar{z}_1 + dz_2d\bar{z}_2+ \cdots + dz_n d\bar{z}_n.
\end{equation*}
If the metric is Einstein away from the divisor, then it is called conical K\"ahler-Einstein metric. 

The study of K\"ahler cone metrics could be traced back to Tian \cite{tian1996kahler}. Since then, there are researches on uniqueness and existence of conical K\"ahler-Einstein metrics \cite{jeffres2000uniqueness, mazzeo1999kahler}. Recently, in \cite{donaldson2012kahler}, Donaldson introduced a new function space $C^{2,\a}_\b$ (see Section \ref{sec:setup} for the definition) and proved a Schauder estimate of the linear equation, which provides the key analysis tool and stimulates more research work along this line. 
It is now known that the uniqueness and existence of conical K\"ahler-Einstein metrics in $C^{2,\alpha}_\beta$ space, i.e. the K\"ahler potential of the conical K\"ahler-Einstein metric lies in this new space, could be proved without knowing the higher order regularity near the divisors (see \cite{li2015continuity} and references therein). Nevertheless, the expansion formula has been used in \cite{MR3470713} to prove a Chern number inequality for some conical K\"ahler metrics. Therefore, it is interesting to understand it down to earth and more accurately, and this is the main topic we would explore in this paper.

For those metrics generated by using Donaldson's Schauder estimate, the K\"ahler potential (together with its tangential derivatives) lies automatically in the $C^{2,\alpha}_\beta$ space.  Under the angle restriction $0<\beta<\frac{1}{2}$, the higher regularity near the divisor is studied by Brendle \cite{brendle2013ricci}, meanwhile the higher order Donaldson's spaces are defined and used to improve the regularity of the more general constant scalar curvature K\"ahler metric with cone singularities in \cite{calamai2015geodesics,li2016uniqueness}. For $0<\beta<1$, Jeffres, Mazzeo and Rubinstein \cite{jeffres2011k} proved that the metric is polyhomogeneous, in the sense that
\begin{equation*}
	\varphi_{KE}(r,\theta,Z)\sim \sum_{j,k\geq 0}\sum_{l=0}^{N_{j,k}} a_{j,k,l}(\theta,Z) r^{j+k/\beta} (\log r)^l
\end{equation*}
where $r=\abs{z_1}^\beta/\beta$, $\theta=\mbox{arg} z_1$ and $Z=(z_2,\cdots,z_n)$. We refer the readers to Theorem 2 of \cite{jeffres2011k} for the complete detail of this result.

The main result of this paper is to show that by following a method of \cite{yin2016}, which is very different from the method in \cite{jeffres2011k}, we can prove an expansion with more detailed information. Among other things, we can show that 
\begin{equation*}
	N_{j,k}\leq \max \set{0,k-1}\quad \text{and}\quad   j \text{ is even.}
\end{equation*}
 Briefly speaking, the extra information is obtained by making full use of the fact that the section of the cone (modulo tangential directions) is $S^1$. Moreover, we shall see in the proof of the main theorem that the exact form of the expansion relies both on this geometric structure of the singularity and on the particular nonlinear structure of the complex Monge-Amp\`ere equation. It makes a very interesting comparison to note that in \cite{yin2016}, the author showed that the expansion of the Ricci flow solution on conical surfaces involves no $\log$ term.

Since the nature of the regularity problem is local, we start with a local form. Its relation to the conical K\"ahler-Einstein metric shall be clear in a minute. Consider a solution $\varphi$ to the following singular Monge-Amp\`ere equation
\begin{equation}\label{eqn:main}
	\det (\varphi_{i\bar{j}}) = \frac{e^{\lambda \varphi +h}}{\abs{z_1}^{2-2\beta}}
\end{equation}
on $B_1\subset \mathbb C^n$. Here $B_1$ is the ball of radius $1$ centered at the origin. Assume that  
\begin{enumerate}[(S1)]
	\item $h$ is a smooth function of $z=(z_1,\dots,z_n)\in B_1\subset \mathbb C^n$; 
	\item $\varphi$ is in $C_\beta^{2,\alpha}$ space (see Section \ref{subsec:estimate} for the definition);
	\item there is some constant $c>1$ such that
		\begin{equation*}
			\frac{1}{c}\omega_{cone}\leq \sqrt{-1}\partial\bar{\partial} \varphi \leq c\omega_{cone},
		\end{equation*}
		where $\omega_{cone}=\sqrt{-1} \partial \bar{\partial} \left( \frac{1}{\beta^2}\abs{z_1}^{2\beta} + \sum_{i=2}^n \abs{z_i}^2 \right)$.
\end{enumerate}

To describe the regularity of the solution $\varphi$, we need some definitions. Define the polar coordinates $(\rho,\theta,\xi)$ by
		\begin{equation*}
			\rho=\frac{1}{\beta}\abs{z_1}^\beta, \quad z_1= \abs{z_1}e^{i\theta}, \quad \xi=(z_2,\cdots,z_n).
		\end{equation*} 
		Let $\mathcal T_{log}$ be the set of functions
\begin{equation*}
	\rho^{2j+\frac{k}{\beta}} (\log \rho)^m \cos l\theta,\rho^{2j+\frac{k}{\beta}} (\log \rho)^m \sin l\theta
\end{equation*}
with $j,k,m,l$ satisfying
\begin{enumerate}[(T1)]
	\item $k,j,l,m=0,1,2,\cdots$;
	\item $\frac{k-l}{2}\in \mathbb N\cup\set{0}$;
	\item $m\leq \max\set{0,k-1}$.
\end{enumerate}

The main theorem of this paper is
\begin{thm}
	\label{thm:main}
	Suppose that $\varphi$ is a solution to \eqref{eqn:main} on $B_1\subset \mathbb C^n$ and that (S1)-(S3) hold. Then for each fixed $\xi$ with $\abs{\xi}<\frac{1}{2}$ and $q>0$, $\varphi$ has an expansion up to order $q$ in the sense that there exists $\eta$ in $Span(\mathcal T_{log})$ such that
	\begin{equation*}
		\varphi(\rho,\theta,\xi)= \eta + \tilde{O}(q).
	\end{equation*}
	Here $Span(\mathcal T_{log})$ is the vector space of finite linear combinations of $\mathcal T_{log}$ and $\tilde{O}(q)$ stands for a function of $\rho,\theta$ satisfying
	\begin{equation*}
		\abs{(\rho\partial_\rho)^{k_1} \partial_\theta^{k_2}\tilde{O}(q)}\leq C(k_1,k_2,\xi) \rho^q \quad \text{for any} \quad k_1,k_2\in \mathbb N\cup \set{0}.
	\end{equation*}
	Moreover, any derivatives of $\varphi$ along $\xi$ direction have expansion up to any order in the same sense.
\end{thm}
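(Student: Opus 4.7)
The plan is to construct the expansion inductively. First, I would change to polar coordinates $(\rho, \theta, \xi)$ in which $z_1 = (\beta \rho)^{1/\beta} e^{i\theta}$, so that any smooth function of $(z_1, \bar z_1, \xi, \bar\xi)$ formally expands as a sum of elements of $\mathcal{T}_{log}$ with no $\log$ factor, the parity condition (T2) being automatic from the identity $z_1^a \bar z_1^b = (\beta\rho)^{(a+b)/\beta} e^{i(a-b)\theta}$. The data $h$, the factor $\abs{z_1}^{2-2\beta}$, and hence the right-hand side of \eqref{eqn:main} all live in $Span(\mathcal{T}_{log})$ modulo smooth $\xi$-coefficients.

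Suppose inductively that we have built $\eta_N \in Span(\mathcal{T}_{log})$ with $\varphi = \eta_N + u$ and $u = \tilde{O}(q_N)$ for some $q_N \nearrow \infty$. Plugging into \eqref{eqn:main} and linearising the determinant at $\eta_N$ yields $\Delta_{g_N} u = F_N + (\text{terms quadratic in } u)$, where $g_N$ is the metric of $\eta_N$ (comparable to $\omega_{cone}$ by (S3)) and $F_N$ is the explicit error of $\eta_N$. The crucial structural observation is that $F_N$ itself still lies in $Span(\mathcal{T}_{log})$: Taylor expanding $e^{\lambda\varphi + h}$, computing $\det(\eta_{N, i\bar j})$, and dividing by $\abs{z_1}^{2-2\beta}$ all preserve membership in this class as well as the parity condition, since products of monomials in $z_1, \bar z_1$ keep $k - l$ even. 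To produce $\eta_{N+1}$ I would solve the model equation $\Delta_{cone} v = F_N$ Fourier-mode by Fourier-mode in $\theta$, treating $\xi$ as a parameter. On the mode $\cos(l\theta)$ the cone Laplacian reduces to $\partial_\rho^2 + \rho^{-1}\partial_\rho - l^2/(\beta^2 \rho^2)$ with indicial exponents $\pm l/\beta$, so a forcing $\rho^{2j + k/\beta}(\log\rho)^m$ produces a particular solution of the form $\rho^{2j + 2 + k/\beta}(\log\rho)^m$ in the non-resonant case, and with one extra factor of $\log\rho$ at resonance. This explains both the shift by $+2$ in the radial exponent (giving the $j$-even structure) and the mechanism by which $\log$-powers can grow.

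The main obstacle is the propagation of the log-power bound $m \leq \max\set{0, k-1}$ through the resonance analysis. Since resonance $2j + k/\beta + 2 = \pm l/\beta$ requires specific arithmetic relations between $\beta$ and the integers $(j, k, l)$, I would isolate the cases in which a $\log$ is genuinely created and verify by a combinatorial argument that each such resonance forces the new $k$-index to be large enough to accommodate the additional log factor under (T3). The perturbative terms $(\Delta_{g_N} - \Delta_{cone})u$ and the quadratic remainder in $u$ contribute only lower-order corrections, so they can be absorbed in subsequent iterations. Donaldson's Schauder estimate on $C^{2,\alpha}_\beta$ then converts each formal improvement into a genuine $\tilde O(q)$-bound with all $(\rho\partial_\rho, \partial_\theta)$-derivatives controlled, and commuting the whole argument with $\partial_\xi, \partial_{\bar\xi}$ yields the last assertion about $\xi$-derivatives.
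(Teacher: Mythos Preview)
Your overall architecture---build an approximate solution $\eta_N$, compute the error, solve a model problem, and iterate---matches the paper's in spirit, but several load-bearing steps are either missing or misattributed.

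\textbf{Starting the induction.} You never explain why $\varphi$ (and its tangential derivatives) lie in $\tilde O(0)$ to begin with. Membership in $C^{2,\alpha}_\beta$ gives H\"older control of the complex Hessian, not bounds on $(\rho\partial_\rho)^{k_1}\partial_\theta^{k_2}\varphi$ for all $k_1,k_2$. The paper devotes an entire section (Theorem~\ref{thm:interior}) to this: one rescales to $\tilde V$-coordinates, observes that the complex Hessian $g_{i\bar j}=\tilde\varphi_{\tilde V,i\bar j}$ satisfies a quasilinear elliptic system with quadratic gradient nonlinearity, and uses $C^{1,\alpha'}$ estimates for such systems to get uniform $C^k$ bounds on $g_{i\bar j}$ in the scaled chart. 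Only then does Lemma~\ref{lem:tildeW} translate this into the weighted bounds defining $\tilde O(0)$. Without this, your induction has no base case.

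\textbf{Converting formal to actual.} Donaldson's Schauder estimate does not ``convert each formal improvement into a genuine $\tilde O(q)$-bound''; it only yields $C^{2,\alpha}_\beta$ regularity. The actual mechanism is different: once the right-hand side of $(B_\chi)$ is shown to be $\eta+\tilde O(q)$ with $\eta\in Span(\mathcal T_{rhs})$, one first solves $\tilde\triangle \eta'=\eta$ exactly inside $Span(\mathcal T_{\log})$ (Lemma~\ref{lem:rhs}), then invokes a separate analytic lemma (Lemma~\ref{lem:harmonic}(2)) asserting that $\tilde\triangle u=v$ with $v\in\tilde O(q)$ admits a solution $u\in\tilde O(q')$ for $q'<q+2$, and finally uses that bounded $\tilde\triangle$-harmonic functions have full expansions. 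None of this is Donaldson's estimate.

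\textbf{Structure of the equation.} You linearise the full $n$-dimensional operator $\triangle_{g_N}$ and then pass to $\triangle_{cone}$. The paper instead isolates the \emph{two-dimensional} cone Laplacian $\tilde\triangle=\partial_\rho^2+\rho^{-1}\partial_\rho+\beta^{-2}\rho^{-2}\partial_\theta^2$ via equation $(A)$, moving everything else to the right. The point is that the residual terms have a very specific form: products $P\varphi_{\bar\xi}\cdot\bar P\varphi_\xi$, which by Lemma~\ref{lem:pair} are polynomials in $\partial_\rho f,\ \rho^{-1}\partial_\theta f$ with \emph{constant} coefficients (the $e^{\pm i\theta}$ factors cancel in conjugate pairs). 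This is what guarantees the right-hand side lands in $Span(\mathcal T_{rhs})$ rather than merely $\rho^{-2}Span(\mathcal T_{\log})$, and hence avoids the forbidden terms $\rho^{k/\beta-2}(\log\rho)^{k-1}\cos k\theta$ that would violate (T3). Your ``combinatorial argument'' for the log-power bound is exactly this, and it is not automatic: it is Lemma~\ref{lem:logrhs}(2), which needs both factors to be non-constant. Because these residual terms involve tangential derivatives $\varphi_{\xi\bar\xi}$, $P\varphi_{\bar\xi}$, etc., the expansion for $\varphi$ cannot be proved in isolation---one must simultaneously bootstrap all $\varphi_\chi$ via the system $(B_\chi)$, not commute with $\partial_\xi$ as an afterthought.
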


Before we discuss the application of Theorem \ref{thm:main}, we briefly introduce the ideas involved in its proof. 

It follows from Donaldson's linear theory that the tangential regularity of $\varphi$ is not a problem at all, hence we treat $z_2, \cdots, z_n$ as parameters and study a regularity problem on conical surface. For that purpose, we set $\tilde{\triangle}= \partial_\rho^2 +\frac{1}{\rho}\partial_\rho + \frac{1}{\beta^2 \rho^2} \partial_\theta^2$ and rewrite \eqref{eqn:main} into the form
\begin{equation*}
	\tilde{\triangle} \varphi = \text{RHS}
\end{equation*}
where the $\text{RHS}$ is an expression involving derivatives of $\varphi$(see Section \ref{subsec:equation}). 

The first ingredient of the proof is a formal analysis, which we use to decide which functions are needed for the expansion of $\varphi$. Roughly speaking, we pretend that $\varphi$ and its tangential derivatives {\it are} finite linear combinations of functions in $\mathcal T$ for some set of functions $\mathcal T$ and compute the $\text{RHS}$ above so that the result is the linear combination of some other set of functions $\mathcal T_{rhs}$, which may not be identical to $\mathcal T$. We look for the smallest $Span(\mathcal T)$ such that every function in $\mathcal T_{rhs}$ lies in the image $\tilde{\triangle}(Span(\mathcal T))$. Moreover, we require that $Span(\mathcal T)$ contains the terms in the expansion of bounded harmonic functions (see (1) in Lemma \ref{lem:harmonic}). This is how we obtain $\mathcal T_{\log}$.

The second ingredient is an estimate of $\varphi$ (Theorem \ref{thm:interior}) away from the singular set $\set{z_1=0}$, which serves as the starting point of the bootstrapping argument in the proof of Theorem \ref{thm:main}. The idea is that away from the singular set, since the complex Monge-Amp\`ere equation is elliptic and that we have assumed Donaldson's $C^{2,\alpha}_\beta$ estimate, we should be able to get estimates for higher order derivatives of $\varphi$, which blow up at some fixed rate near the singular set. 

The rest of the proof is a bootstrapping argument. Once we know that $\varphi$ and its tangential derivatives have expansion up to a certain order, we can improve this order by at least $1$. The details of this argument appear in Section \ref{subsec:finite} and \ref{subsec:proof}.

\vskip 2mm

To apply Theorem \ref{thm:main} to the regularity problem of conical K\"ahler-Einstein metric, we briefly recall the basic setting. Suppose that $M$ is a compact K\"ahler manifold with a smooth K\"ahler form $\omega_0$ and $D$ is a smooth divisor in $M$, whose corresponding line bundle $L$ has a global holomorphic section $s$ vanishing on $D$. Assume that 
\begin{equation}\label{eqn:homology}
	c_1(M)= \mu [\omega_0] + (1-\beta) [D]
\end{equation}
for some $\mu\in \Real$. Here $[D]$ is the cohomology class defined by the closed $(1,1)$ current defined by the divisor $D$.  

By \eqref{eqn:homology}, there exists an (smooth) hermitian metric $h_0$ of $L$ such that its curvature form $\Theta_{h_0}$ satisfies
\begin{equation}\label{eqn:R2}
	Ric(\omega_0) = \mu \omega_0 + (1-\beta) \Theta_{h_0}
\end{equation}

For $\delta$ sufficiently small,
\begin{equation*}
	\omega_D= \omega_0 + \frac{\delta i}{2\pi}\partial\bar{\partial} \abs{s}_{h_0}^{2\beta}
\end{equation*}
is a K\"ahler metric on $M\setminus D$ and is asymptotically a cone metric along $D$ (see Section 4.3 in \cite{donaldson2012kahler}). For $\psi\in C^{2,\alpha}_\beta(M)$ (the Donaldson H\"older space, see Section 2 for the precise definition), the K\"ahler metric
\begin{equation*}
	\omega_\psi=\omega_D + \frac{i}{2\pi} \partial \bar{\partial} \psi
\end{equation*}
is called a conical K\"ahler-Einstein metric if 
\begin{equation*}
	Ric(\omega_\psi) = \mu \omega_\psi + (1-\beta) D
\end{equation*}
in the sense of currents, where by abuse of notation, we also use $D$ for the current associated to the divisor $D$. By the Poincar\'e-Lelong formula, this is equivalent to
\begin{equation}\label{eqn:R1}
	Ric(\omega_\psi) = \mu \omega_\psi + \frac{i}{2\pi}\partial\bar{\partial} \abs{s}_{h_0}^{2(1-\beta)} + (1-\beta) \Theta_{h_0}.
\end{equation}

Subtracting \eqref{eqn:R2} from \eqref{eqn:R1} yields (in the sense of currents)
\begin{equation*}
	\partial\bar{\partial} \log \frac{\omega_\psi^n}{\omega_0^n} = \partial \bar{\partial} (\mu \psi + \mu \delta \abs{s}_{h_0}^{2\beta}) + \partial \bar{\partial} \abs{s}_{h_0}^{2(1-\beta)},
\end{equation*}
which implies
\begin{equation}
	\frac{\omega_\psi^n}{\omega_0^n}= \abs{s}_{h_0}^{2(1-\beta)} e^{\mu (\psi + \delta \abs{s}_{h_0}^{2\beta})}.
	\label{eqn:Rglobal}
\end{equation}

To reduce the global equation \eqref{eqn:Rglobal} into a local one, we take a holomorphic coordinate system $\set{z_i}$ around some $p\in D$ such that $D$ (in this neighborhood) is given by $\set{z_1=0}$. We pick the trivialization of $L$ such that $s$ is given by $z_1$ and denote the hermitian metric in this trivialization by a real-valued function $\tilde{h}$. Moreover, we assume that $\omega_0= \frac{i}{2\pi} \partial \bar{\partial} \psi_0$. Keeping the above notations in mind and setting
\begin{equation}\label{eqn:varphi}
	\varphi = \psi_0 + \delta \abs{s}_{h_0}^{2\beta} + \psi,
\end{equation}
we obtain
\begin{equation}\label{eqn:cor}
	\det \varphi_{i\bar{j}} = \det (\psi_0)_{i\bar{j}}  \abs{z_1}^{2(2-\beta)} \tilde{h}^{1-\beta} e^{ \mu (\varphi- \psi_0)},
\end{equation}
which is just equation \eqref{eqn:main} if we take
$\lambda=\mu$ and
\begin{equation}\label{eqn:mainh}
	h= \log \left( \det (\psi_0)_{i\bar{j}} \cdot \tilde{h}^{1-\beta} e^{ -\mu \psi_0} \right).
\end{equation}
Hence, we can apply Theorem \ref{thm:main} to get
\begin{cor}
	\label{cor:main} If $\omega_\psi$ is a conical K\"ahler-Einstein metric as defined above, then $\varphi$ (hence $\psi$) has an expansion up to any order as defined in Theorem \ref{thm:main}.
\end{cor}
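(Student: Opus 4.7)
The plan is to apply Theorem \ref{thm:main} pointwise along $D$, after verifying that the reduction from the global K\"ahler--Einstein equation \eqref{eqn:Rglobal} to the local form \eqref{eqn:cor}, already carried out in the text, places us in its hypotheses. Given $p\in D$, I would choose holomorphic coordinates $\set{z_1,\dots,z_n}$ with $D=\set{z_1=0}$ near $p$ and a trivialization of $L$ in which $s=z_1$; then define $\varphi$ by \eqref{eqn:varphi} and $h$ by \eqref{eqn:mainh}, so that $\varphi$ satisfies \eqref{eqn:main} with $\lambda=\mu$ on a small ball. After a linear change of the $z$-coordinates one may assume the ball has radius $1$.

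What remains is to verify (S1)--(S3). For (S1), $\psi_0$ is a smooth local potential of $\omega_0$ and $\tilde{h}$ is a smooth positive function, so $h=\log(\det(\psi_0)_{i\bar j}\cdot\tilde{h}^{1-\beta}e^{-\mu\psi_0})$ is smooth. For (S2), the hypothesis $\psi\in C^{2,\alpha}_\beta$ combines with smoothness of $\psi_0$ and with the fact that $\abs{s}_{h_0}^{2\beta}=\tilde{h}^\beta\abs{z_1}^{2\beta}$ is a smooth multiple of the model weight (hence in $C^{2,\alpha}_\beta$) to give $\varphi\in C^{2,\alpha}_\beta$. For (S3), the identities $\omega_0=\tfrac{i}{2\pi}\partial\bar\partial\psi_0$, $\omega_D=\omega_0+\tfrac{\delta i}{2\pi}\partial\bar\partial\abs{s}_{h_0}^{2\beta}$ and $\omega_\psi=\omega_D+\tfrac{i}{2\pi}\partial\bar\partial\psi$ add up to
\begin{equation*}
	\sqrt{-1}\,\partial\bar\partial\varphi = 2\pi\,\omega_\psi,
\end{equation*}
so the assumption that $\omega_\psi$ is K\"ahler with cone angle $2\pi\beta$ along $D$ translates, after absorbing the factor $2\pi$ into the constant, into exactly the two-sided bound against $\omega_{cone}$ required by (S3).

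With (S1)--(S3) in place, Theorem \ref{thm:main} produces the desired expansion of $\varphi$ in $Span(\mathcal T_{log})$ to any order, together with tangential derivatives. To pass from $\varphi$ to $\psi=\varphi-\psi_0-\delta\abs{s}_{h_0}^{2\beta}$, I would observe that both subtracted terms already expand in $Span(\mathcal T_{log})$: a smooth function of $(z_1,\bar z_1,\xi,\bar\xi)$ has a Taylor series whose monomials $z_1^a\bar z_1^b$ correspond, via $z_1=(\beta\rho)^{1/\beta}e^{i\theta}$, to $\rho^{(a+b)/\beta}\cos((a-b)\theta)$ and $\rho^{(a+b)/\beta}\sin((a-b)\theta)$, which satisfy (T1)--(T3) with $j=m=0$, $k=a+b$, $l=\abs{a-b}$; meanwhile $\delta\abs{s}_{h_0}^{2\beta}=\delta\tilde{h}^\beta\beta^2\rho^2$ lies in the span via the term with $j=1$, $k=l=m=0$ multiplied by a smooth factor in $(z,\bar z)$. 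I do not anticipate a serious obstacle: the entire argument is essentially bookkeeping, and the one calculation that matters is the identification $\sqrt{-1}\partial\bar\partial\varphi=2\pi\omega_\psi$, which transfers the cone-angle hypothesis on $\omega_\psi$ directly into hypothesis (S3) of Theorem \ref{thm:main}.
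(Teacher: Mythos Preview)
Your proposal is correct and follows essentially the same route as the paper: verify (S1)--(S3) for the local $\varphi$ built from \eqref{eqn:varphi} and \eqref{eqn:mainh}, invoke Theorem \ref{thm:main}, and then observe that $\varphi-\psi=\psi_0+\delta\abs{s}_{h_0}^{2\beta}$ already expands in $Span(\mathcal T_{\log})$. The only cosmetic difference is that the paper appeals to Lemma \ref{lem:smooth} (applied to $z_1$) for the expansion of smooth functions, whereas you spell out the Taylor monomial computation directly; both arguments are equivalent.
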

The proof of this corollary is given in Section \ref{subsec:cor}.

\bigskip

\textbf{Acknowledgements}. 
The work of H. Yin is supported by NSFC 11471300.
The work of K. Zheng has received funding from the European Union's Horizon 2020 research and innovation programme under the Marie Sk{\l}odowska-Curie grant agreement No 703949, and was also partially supported by the Engineering and Physical Sciences Research Council (EPSRC) on a Programme Grant entitled "Singularities of Geometric Partial Differential Equations" reference number EP/K00865X/1.

\section{Preliminaries}\label{sec:setup}
We include in this section a few elementary discussions needed for this paper. First, we define several coordinate systems and explain the notations that we use. Second, we prove a few lemmas on how estimates are translated between different coordinate systems. Finally, we rewrite the complex Monge-Amp\`ere equation for the use of the proof of our main result.  

\subsection{Various coordinates}\label{subsec:coordinate}
Recall that $D$ is a smooth divisor of the K\"ahler manifold $M$. We shall list a few coordinate systems which appear naturally in the study of conical K\"ahler metric. Please note that some of them are defined in a neighborhood of any point in $D$ and some are defined in a small ball away from $D$. 

\begin{enumerate}[(1)]
	\item The {\bf holomorphic coordinates} $z$. This is the holomorphic coordinates $(z_1,\cdots,z_n)$ of the underlying complex manifold such that the divisor $D$ is given by $z_1=0$. It is the most natural coordinate system and we have introduced our equation \eqref{eqn:main} using it. Partial derivatives of $\varphi$ with respect to $z$ coordinates are denoted by $\varphi_i, \varphi_{\bar{j}}$, $\varphi_{i\bar{j}}$ and so on. The roles played by $z_1$ and $z_l(l=2,\cdots,n)$ are very different. To emphasize this, we usually write $\xi$ for any one of $z_2,\cdots,z_n$. Hence, $\varphi_\xi$ can be any one of $\varphi_2,\cdots,\varphi_n$ and the same convention applies to $\varphi_{\bar{\xi}}$, $\varphi_{\xi\bar{\xi}}$ and so on.

	\item The {\bf polar coordinates} $(\rho,\theta,\xi)$. If $(z_1,\cdots,z_n)$ is the holomorphic coordinates above, the polar coordinates are defined by
		\begin{equation*}
			\rho=\frac{1}{\beta}\abs{z_1}^\beta, \quad z_1= \abs{z_1}e^{i\theta}, \quad \xi=(z_2,\cdots,z_n).
		\end{equation*} 
		Here we take $\xi$ as a real vector in $\Real^{2n-2}$.
		Partial derivatives are denoted by $\partial_\rho$, $\partial_\theta$, $\nabla_\xi$, $\nabla^2_\xi$ and so on.

	\item The {\bf lifted holomorphic coordinates} $v$ (or $V$-coordinate for short). This coordinate system is only defined in small neighborhood away from (but near) $D$. More precisely, given a point $Z_0=(\rho_0,\theta_0,\xi_0)$ (in the polar coordinates) with $\rho_0\ne 0$ and $\theta_0\in [0,2\pi)$, we consider a neighborhood of $Z_0$ defined by
\begin{equation}\label{eqn:omega}
	\Omega = \set{(\rho,\theta,\xi)|\quad \rho_0/2<\rho<2\rho_0, \theta_0-0.1<\theta<\theta_0+0.1, \abs{\xi_0-\xi}<\rho_0}.
\end{equation}
For points in $\Omega$, it makes sense to define
\begin{equation*}
	v_1:=\frac{1}{\beta}z_1^\beta=\frac{1}{\beta} \abs{z_1}^\beta e^{i \beta \theta}.
\end{equation*}
We write $v=(v_1,\xi)=(v_1,\cdots,v_n)\in \mathbb C^n$. Note that for any such $Z_0$, the lifted holomorphic coordinates are defined for $\abs{v-v_0}\leq c_\beta \rho_0$ for some constant $c_\beta<1$ (depending only on $\beta$) and $v_0=(\rho_0 e^{i\beta\theta_0}, \xi_0)$. Partial derivatives are denoted by $\varphi_{V,i}, \varphi_{V,\bar{j}}, \varphi_{V, i\bar{j}}$ and so on. As before, we use $\nabla_{V,\xi}$ to indicate $\partial_{v_i}$ or $\partial_{\bar{v}_i}$ for $i=2,\cdots,n$.

\item The {\bf scaled lifted holomorphic coordinates} $\tilde{v}$ (or $\tilde{V}$-coordinate for short). This is simply a scaling (by $\rho_0$) and translation of $v$ (by $v_0$) so that $\tilde{v}$ is defined in $B_{c_\beta}(0)\subset \mathbb C^n$. Precisely,
	\begin{equation*}
		\tilde{v}=\frac{v-v_0}{\rho_0}.
	\end{equation*}
	Partial derivatives are denoted by $\varphi_{\tilde{V},i}, \varphi_{\tilde{V},\bar{j}}$, $\varphi_{\tilde{V},i\bar{j}}$ and so on and $\nabla_{\tilde{V},\xi}$ is understood as in $V$-coordinates.
\end{enumerate}

We also define H\"older spaces by using (scaled) lifted holomorphic coordinates. 
$C^{k,\alpha}_{V}$ is the set of function $f:B_{c_\beta \rho_0}(v_0)\to \Real$ that is $C^{k,\alpha}$ in the usual sense (i.e. with respect to the derivative and distance given by the $v$-coordinates) and the norm $\norm{\cdot}_{C^{k,\alpha}_V}$ is also the same with the usual H\"older norm $\norm{\cdot}_{C^{k,\alpha}(B_{c_\beta \rho_0}(v_0))}$. Here we use the subscript to emphasize that we are using the $V$-coordinates around $v_0$. Similar convention holds for $C^{k,\alpha}_{\tilde{V}}$.


\subsection{Estimates in various coordinates}\label{subsec:estimate}

There is little doubt that all the above mentioned coordinate systems are important in the study of conical K\"ahler metrics and that with some efforts, one can switch between them if necessary. In this section, we prove a few lemmas along this line. 

First, we show how a bound of weighted derivatives (in polar coordinates) can be proved by using the lifted holomorphic coordinates.  

\begin{lem}\label{lem:tildeW}
	Suppose that $u:B_1\subset \mathbb C^n\to \Real$ is a function of holomorphic coordinates. The following are almost equivalent in the sense that: (ii) implies (i), while (i) implies (ii) if the $1/2$'s in (ii) are replaced by $1/4$.
	
	(i) For any $k_1,k_2,k_3\in \mathbb Z^+\cup\set{0}$, there exists constant $C(k_1,k_2,k_3)>0$ such that
	\begin{equation*}
		\abs{(\rho \partial_\rho)^{k_1} \partial_\theta^{k_2} (\rho \nabla_{\xi})^{k_3} u} \leq C(k_1,k_2,k_3)
	\end{equation*}
	for any $\rho\in (0,1/2)$ and any $\abs{\xi}<1/2$.

	(ii) For any $k\in \mathbb Z^+\cup\set{0}$, any $Z_0= (\rho_0,\theta_0,\xi_0)$ with $\rho_0\in (0,1/2)$ and $\abs{\xi_0}<1/2$, the $k$-th order derivatives of $u$ on $B_{c_\beta}$ in scaled lifted holomorphic coordinates are bounded by a constant $C(k)$ (independent of $Z_0$).
\end{lem}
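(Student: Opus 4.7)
The plan is to treat this as a pure change-of-coordinates exercise: write each set of differential operators (the weighted polar ones in (i), and the scaled lifted holomorphic ones in (ii)) as a smooth-coefficient linear combination of operators from the other set, and then transfer the bounds by induction on derivative order.

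First I would compute the chain rule between the $(\rho,\theta,\xi)$ and $v$ coordinates. Starting from $v_1=\frac1\beta z_1^\beta=\rho e^{i\beta\theta}$, a direct computation gives
\[
v_1\partial_{v_1}=\tfrac12\bigl(\rho\partial_\rho-\tfrac{i}{\beta}\partial_\theta\bigr),\qquad \bar v_1\partial_{\bar v_1}=\tfrac12\bigl(\rho\partial_\rho+\tfrac{i}{\beta}\partial_\theta\bigr),
\]
so that $\rho\partial_\rho$ and $\partial_\theta$ are $\pm$-combinations of $v_1\partial_{v_1}$ and $\bar v_1\partial_{\bar v_1}$; for $j\ge 2$ one has the trivial identity $\partial_{v_j}=\partial_{z_j}$. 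Passing to scaled coordinates through $\partial_{\tilde v_i}=\rho_0\partial_{v_i}$, and using $v_1/\rho_0=e^{i\beta\theta_0}+\tilde v_1$ together with $\rho/\rho_0=\abs{e^{i\beta\theta_0}+\tilde v_1}$, one observes that on the ball $\abs{\tilde v}<c_\beta$ (with $c_\beta<1$) each of $v_1/\rho_0$, $\rho_0/v_1$ and $\rho/\rho_0$ is a smooth function of $\tilde v$ taking values in a compact subset of $\mathbb C\setminus\set{0}$ (respectively of $(0,\infty)$), and therefore each has all its $\tilde v$-derivatives bounded by constants depending only on $\beta$. Consequently every $\partial_{\tilde v_i}$ or $\partial_{\bar{\tilde v}_i}$ can be expressed as a smooth-coefficient linear combination of $\rho\partial_\rho,\partial_\theta,\rho\partial_{z_j},\rho\partial_{\bar z_j}$, and conversely, with all coefficient functions bounded in $C^\infty$ on $\set{\abs{\tilde v}<c_\beta}$ by constants depending only on $\beta$.

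With these identities both implications follow by the same bookkeeping induction on derivative order. For (ii)$\Rightarrow$(i), given any target point $Z=(\rho_*,\theta_*,\xi_*)$ with $\rho_*\in(0,1/2)$ and $\abs{\xi_*}<1/2$, centre the scaled coordinates at $Z_0=Z$; expanding $(\rho\partial_\rho)^{k_1}\partial_\theta^{k_2}(\rho\nabla_\xi)^{k_3}$ into scaled operators via iterated Leibniz yields a finite sum of terms, each a bounded smooth function of $\tilde v$ times a mixed scaled derivative of $u$ of order at most $k_1+k_2+k_3$; evaluation at $\tilde v=0$ (which corresponds to $Z$) and an invocation of (ii) finishes this direction. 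For (i)$\Rightarrow$(ii), the tightened hypothesis $\rho_0,\abs{\xi_0}<1/4$ guarantees that the scaled ball $\abs{\tilde v}<c_\beta$ corresponds in polar coordinates to the region $\rho\in(\rho_0(1-c_\beta),\rho_0(1+c_\beta))\subset(0,1/2)$ and $\abs{\xi-\xi_0}<c_\beta\rho_0$, so $\abs{\xi}<1/2$, and (i) is applicable throughout the scaled ball; expanding $\partial_{\tilde v}^\alpha\partial_{\bar{\tilde v}}^\beta$ in weighted polar operators and bounding each resulting term by (i) completes the argument. I do not expect any substantive obstacle: the only mildly delicate point is that at each step of the induction the operators further differentiate coefficient functions produced at the previous step, and this is controlled precisely by the uniform $C^\infty$ bounds on the transition functions established above. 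The $1/2$-versus-$1/4$ asymmetry is a purely geometric domain-containment issue and is the sole reason the two implications are not perfectly symmetric.
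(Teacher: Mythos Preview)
Your proposal is correct and is essentially the paper's own proof. The paper writes out the same chain-rule identities explicitly (its equations for $\partial_{\tilde v_1}$, $\partial_{\bar{\tilde v}_1}$, $\partial_{\tilde v_l}$ in terms of $\rho\partial_\rho$, $\partial_\theta$, $\rho\nabla_\xi$, and their inverses), verifies that the coefficient functions themselves satisfy the weighted polar bounds, and then iterates via Leibniz exactly as you describe; your packaging via $v_1\partial_{v_1}=\tfrac12(\rho\partial_\rho-\tfrac{i}{\beta}\partial_\theta)$ and the observation that $v_1/\rho_0=e^{i\beta\theta_0}+\tilde v_1$ is smooth and bounded away from zero on $\abs{\tilde v}<c_\beta$ is a slightly cleaner way of recording the same information. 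The only organizational difference is that the paper proves (i)$\Rightarrow$(ii) first and then invokes it on the coefficient functions to handle (ii)$\Rightarrow$(i), whereas you note directly that the coefficients are uniformly smooth in $\tilde v$; these are equivalent observations.
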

\begin{proof}
	The proof is based on the following computation.
	For the $\rho$ and $\theta$ part, recall that $v_1=\frac{1}{\beta}z_1^\beta$ and $z_1=r e^{i\theta}$, which implies
	\begin{equation*}
	\partial_{v_1}= z_1^{1-\beta} \partial_{z_1} \quad \mbox{and}\quad \partial_{\bar v_1}= \bar z_1^{1-\beta} \partial_{\bar z_1}
	\end{equation*}
	and
	\begin{eqnarray*}
		\partial_{z_1}&=&  \frac{1}{2}\left( e^{-i\theta}\partial_r + e^{i(\frac{3}{2}\pi-\theta)}\frac{1}{r}\partial_\theta \right) \\
		\partial_{\bar{z}_1}&=&  \frac{1}{2}\left( e^{i\theta}\partial_r + e^{i(-\frac{3}{2}\pi+\theta)}\frac{1}{r}\partial_\theta \right).
	\end{eqnarray*}
	Putting the above equations together and recalling that $\rho=\frac{1}{\beta}r^\beta$ give 
	\begin{equation}\label{eqn:a1}
		\partial_{\tilde{v}_1}=\frac{\rho_0}{2\rho}\left( e^{-i\beta \theta} (\rho\partial_\rho) + \beta^{-1} e^{i(\frac{3}{2}\pi -\beta \theta)}\partial_\theta \right)	
	\end{equation}
	and
	\begin{equation}\label{eqn:a2}
		\partial_{\bar{\tilde{v}}_1}=\frac{\rho_0}{2\rho}\left( e^{i\beta \theta} (\rho\partial_\rho) + \beta^{-1} e^{i(-\frac{3}{2}\pi +\beta \theta)}\partial_\theta \right).
	\end{equation}
	Along the tangent direction, for $l=2,\cdots,n$,
	\begin{equation}\label{eqn:a3}
		\begin{split}
			\partial_{\tilde{v}_l} &= \rho_0 \partial_{z_l} = \frac{\rho_0}{2\rho} (\rho \partial_{x_l} - i \rho\partial_{y_l}) \\
			\partial_{\bar{\tilde{v}}_l} &= \rho_0 \partial_{\bar{z}_l} = \frac{\rho_0}{2\rho} (\rho \partial_{x_l} + i \rho \partial_{y_l}).
		\end{split}
	\end{equation}
	Here $x_l$ and $y_l$ are the real and imaginary part of $z_l$ so that $\nabla_\xi= (\partial_{x_1},\ldots,\partial_{x_n},\partial_{y_1},\cdots,\partial_{y_n})$. 
	
	If (i) holds, an immediate consequence of \eqref{eqn:a1}, \eqref{eqn:a2} and \eqref{eqn:a3} is that (ii) holds for $k=1$. For larger $k$, we observe that any coefficient $w$ in front of $\rho \partial_\rho$, $\partial_\theta$ and $\rho \nabla_{\xi}$ in \eqref{eqn:a1},\eqref{eqn:a2} and \eqref{eqn:a3} satisfies
	\begin{equation*}
		\abs{(\rho\partial_\rho)^{l_1} (\partial_\theta)^{l_2} (\rho\nabla_\xi)^{l_3} w} \leq C(l_1,l_2,l_3) \qquad \text{when} \quad \rho\in (\rho_0/2,2\rho_0)
	\end{equation*}
	for some constants $C(l_1,l_2,l_3)$. One may derive from \eqref{eqn:a1}, \eqref{eqn:a2} and \eqref{eqn:a3} the formulas for higher $\tilde{v}$-derivatives and with the help of the above observation, we conclude that (i) implies (ii) for any $k$.

	To see that (ii) implies (i), we solve from \eqref{eqn:a1}, \eqref{eqn:a2} and \eqref{eqn:a3}
	\begin{equation}
		\rho\partial\rho = \frac{\rho}{\rho_0} \left( e^{i \beta \theta} \partial_{\tilde{v}_1} + e^{ -i \beta \theta} \partial_{\bar{\tilde{v}}_1} \right)
		\label{eqn:b1}
	\end{equation}
	\begin{equation}
		\partial_\theta = \frac{\beta \rho i}{\rho_0} \left( e^{i \beta \theta} \partial_{\tilde{v}_1} - e^{ -i \beta \theta} \partial_{\bar{\tilde{v}}_1} \right)
		\label{eqn:b2}
	\end{equation}
	and for $l=2,\cdots,n$,
	\begin{equation}
		\begin{split}
			\rho\partial_{x_l}&= \frac{\rho}{\rho_0} (\partial_{\tilde{v}_l}+\partial_{\bar{\tilde{v}}_l})\\
		\rho\partial_{y_l}&= \frac{\rho}{\rho_0} (\partial_{\tilde{v}_l}- \partial_{\bar{\tilde{v}}_l}).
		\end{split}
		\label{eqn:b3}
	\end{equation}
	If $v$ is any coefficient in front of $\partial_{\tilde{v}_l}$ or $\partial_{\bar{\tilde{v}}_l}$ with $l=1,\cdots,n$ in \eqref{eqn:b1}, \eqref{eqn:b2} and \eqref{eqn:b3}, then $v$ obviously satisfies (i), hence by what we have just proved, $v$ also satisfies (ii). In particular, any $\tilde{v}$-derivatives of $v$ in this same $\tilde{V}$-coordinates around $Z_0$ are bounded. With this observation, (ii) implies (i) by iterated use of \eqref{eqn:b1}, \eqref{eqn:b2} and \eqref{eqn:b3}.
\end{proof}

\begin{cor}\label{cor:h}
	Suppose that $h:B_1\to \Real$ is a smooth function in holomorphic coordinates $z$. Then $h$ satisfies (i) and (ii) in Lemma \ref{lem:tildeW}. In particular, for each $Z_0=(\rho_0,\theta_0,\xi_0)$ with $\rho_0\in (0,1/4)$ and $\abs{\xi_0}<1/4$, if by using the $\tilde{V}$-coordinate around $Z_0$ we take $h$ as a function defined on $B_{c_\beta}$, then
	\begin{equation*}
		\norm{h}_{C_{\tilde{V}}^k(B_{c_\beta})}\leq C(k) \qquad \text{independent of} \quad Z_0.
	\end{equation*}
\end{cor}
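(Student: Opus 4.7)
The plan is to first verify part (i) of Lemma \ref{lem:tildeW} directly from smoothness of $h$ in the $z$-coordinates, and then invoke the implication (i)$\Rightarrow$(ii) proved in that lemma (with the $1/4$-shrinking) to conclude the uniform $\tilde{V}$-estimate on $B_{c_\beta}$.

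To establish (i), I would rewrite each of the vector fields $\rho\partial_\rho$, $\partial_\theta$ and $\rho\nabla_\xi$ as a smooth vector field in the holomorphic coordinates $z$. Using $r=|z_1|=(\beta\rho)^{1/\beta}$ one computes $\partial_\rho = r^{1-\beta}\partial_r$, whence
\begin{equation*}
\rho\partial_\rho = \beta^{-1}\, r\partial_r = \beta^{-1}\bigl(z_1\partial_{z_1} + \bar z_1\partial_{\bar z_1}\bigr),
\end{equation*}
which is a holomorphic coordinate vector field with smooth (indeed linear) coefficients on $B_1$. Likewise $\partial_\theta = i\bigl(z_1\partial_{z_1}-\bar z_1\partial_{\bar z_1}\bigr)$, and
\begin{equation*}
\rho\nabla_\xi = \beta^{-1}|z_1|^\beta\,\nabla_\xi,
\end{equation*}
whose coefficient is bounded on $B_{1/2}$ (although not smooth at $z_1=0$, the operator $\rho\nabla_\xi$ just multiplies by a bounded function before differentiating, and iterating it produces derivatives of $h$ multiplied by bounded powers of $|z_1|^\beta$). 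Since $h$ is smooth on $B_1$, every mixed $z$-derivative of $h$ is bounded on $B_{1/2}$; applying any word of length $k_1+k_2+k_3$ in the three operators above produces a finite sum of terms of the form (bounded function)$\cdot\partial_z^\alpha\partial_{\bar z}^\beta h$, uniformly bounded on $\{\rho<1/2,|\xi|<1/2\}$. This establishes (i).

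With (i) in hand on the region $\rho\in(0,1/2)$, $|\xi|<1/2$, the implication (i)$\Rightarrow$(ii) of Lemma \ref{lem:tildeW} (which requires shrinking $1/2$ to $1/4$) immediately gives that, for every $Z_0=(\rho_0,\theta_0,\xi_0)$ with $\rho_0\in(0,1/4)$ and $|\xi_0|<1/4$, the $k$-th order $\tilde v$-derivatives of $h$ on $B_{c_\beta}$ in the $\tilde V$-coordinate centered at $Z_0$ are bounded by a constant $C(k)$ independent of $Z_0$. This is precisely the claimed $\|h\|_{C^k_{\tilde V}(B_{c_\beta})}\leq C(k)$.

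The only mildly delicate point is handling $\rho\nabla_\xi$, whose coefficient $\beta^{-1}|z_1|^\beta$ is merely H\"older (not smooth) near $z_1=0$; but since we only ever hit $h$ with these operators and then estimate in sup-norm, the coefficient enters only as a bounded multiplier and its lack of smoothness plays no role. Everything else is bookkeeping with the chain rule, so I do not expect any substantive obstacle.
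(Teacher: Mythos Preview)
Your proposal is correct and follows the same strategy as the paper: verify (i) of Lemma~\ref{lem:tildeW} from smoothness in $z$, then invoke (i)$\Rightarrow$(ii) with the $1/4$-shrinking. The paper's two-line proof is marginally slicker in one respect: instead of carrying the weighted operator $\rho\nabla_\xi=\beta^{-1}|z_1|^\beta\nabla_\xi$ and worrying about the non-smooth coefficient, it simply observes that smoothness of $h$ in $z$ gives the \emph{stronger} bound
\[
\bigl|(r\partial_r)^{k_1}(\partial_\theta)^{k_2}(\nabla_\xi)^{k_3}h\bigr|\le C(k_1,k_2,k_3),
\]
with unweighted $\nabla_\xi$, and then uses $r\partial_r=\beta\,\rho\partial_\rho$ together with boundedness of $\rho$ to deduce (i). This sidesteps entirely the ``mildly delicate point'' you flag, since no $|z_1|^\beta$ coefficient ever appears. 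Your handling of that point is fine (the key being that $z_1\partial_{z_1}|z_1|^\beta=\tfrac{\beta}{2}|z_1|^\beta$ stays bounded, or equivalently that $\rho\partial_\rho$ applied to $\rho^{k_3}$ reproduces $\rho^{k_3}$ up to constants), but the paper's route avoids the bookkeeping altogether.
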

\begin{proof}
	Being a smooth function in $z$ implies that
	\begin{equation*}
		\abs{(r\partial_r)^{k_1} (\partial_\theta)^{k_2} (\nabla_\xi)^{k_3}  h}\leq C(k_1,k_2,k_3).
	\end{equation*}
	The corollary follows from Lemma \ref{lem:tildeW} and the fact that $r\partial_r = \beta \rho\partial_\rho$.
\end{proof}

Next, we recall the Donaldson's $C_\beta^{2,\alpha}$ norm and show some implications (in the lifted holomorphic coordinates) when it is bounded. 

With a background conical metric (for example, $g_{cone}$ as given in the introduction), one can define $C^\alpha_\beta(B)$ space and $C_\beta^\alpha(B)$ norm for functions defined on $B$, where $B$ stands for some ball in $\mathbb C^n$ centered at the origin. Moreover, there is a subspace $C^{\alpha}_{\beta,0}(B)$ of $C_\beta^\alpha(B)$ which consists of all $u\in C_\beta^\alpha(B)$ satisfying $u|_{\set{z_1=0}} \equiv 0$. A function $u$ defined on $B$ is said to be in (Donaldson's) $C_\beta^{2,\alpha}$ space if and only if (see Section 2 of \cite{brendle2013ricci})
\begin{enumerate}[(D1)]
	\item $u\in C_\beta^\alpha$
	\item $\abs{z_1}^{1-\beta} \partial_{z_1} u, \abs{z_1}^{1-\beta} \partial_{\bar{z}_1}u \in C^\alpha_{\beta,0}$;
	\item for $k\ne 1$, $\abs{z_1}^{1-\beta} \partial^2_{z_k \bar{z}_1} u$ and $\abs{z_1}^{1-\beta} \partial^2_{z_1\bar{z}_k}u \in C^\alpha_{\beta,0}$;
	\item for $k\ne 1$ and $l\ne 1$, $\abs{z_1}^{2-2\beta} \partial^2_{z_1\bar{z}_1} u$, $\partial^2_{z_k \bar{z}_l} u$ and $\partial^2_{z_k z_l} u\in C_\beta^\alpha$.
\end{enumerate}
Moreover, $\norm{u}_{C^{2,\alpha}_\beta(B)}$ is defined to be the sum of the $C_\beta^\alpha$ norms of all the above functions.

There is an equivalent way (see \cite{brendle2013ricci}) of defining the same space in terms of the polar coordinates: (if we replace (D1-D4) above by) 
\begin{enumerate}[(P1)]
	\item $u\in C_\beta^\alpha$;
	\item $\partial_\rho u, \frac{1}{\rho}\partial_\theta u$ are in $C_{\beta,0}^\alpha$;
	\item $\nabla^2_\xi u, \partial_\rho \nabla_\xi u, \rho^{-1} \partial_\theta \nabla_\xi u$ and $\tilde{{\triangle}} u$ are in $C_\beta^\alpha$
where $\tilde{\triangle}= \partial^2_\rho + \frac{1}{\rho} \partial_\rho + \frac{1}{\beta^2 \rho^2} \partial_\theta^2 $. 
\end{enumerate}

Here is an estimate in the lifted holomorphic coordinates for functions with bounded $C_\beta^{2,\alpha}$ norm. 
\begin{lem} \label{lem:metricinV}
	Suppose that $u\in C_\beta^{2,\alpha}(B_1)$. For any $Z_0=(\rho_0,\theta_0,\xi_0)$ with $\rho_0\in (0,\frac{1}{2})$ and $\abs{\xi_0}<1/2$, $u$ as a function in the lifted holomorphic coordinates are bounded in $C^{2,\alpha}_V$ satisfying
	\begin{equation*}
		\norm{u_{V,i\bar{j}}}_{C^\alpha_V(B_{c_\beta \rho_0}(v_0))}\leq C \norm{u}_{C_\beta^{2,\alpha}(B_1)}
	\end{equation*}
	for some $C>0$ independent of $Z_0$ and $u$.
\end{lem}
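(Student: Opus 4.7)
The plan is to express each $u_{V,i\bar j}$ via the chain rule in terms of the $z$-derivatives appearing in (D2)--(D4), and then convert the resulting $C^\alpha_\beta$ estimates into $C^\alpha_V$ estimates on the chart $B_{c_\beta \rho_0}(v_0)$. The conversion rests on the observation that $\omega_{cone} = \sqrt{-1}\,\partial\bar\partial(|v_1|^2 + \sum_{k\ge 2}|v_k|^2)$ in $V$-coordinates is the Euclidean K\"ahler form, so the cone distance and the $V$-Euclidean distance coincide on the (convex) chart. Consequently, any $C^\alpha_\beta$ Hölder bound on $B_1$ restricts to a $C^\alpha_V$ bound on the chart with the same constant, independent of $Z_0$.

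Using $\partial_{v_1} = z_1^{1-\beta}\partial_{z_1}$, $\partial_{\bar v_1} = \bar z_1^{1-\beta}\partial_{\bar z_1}$, and $\partial_{v_k} = \partial_{z_k}$ for $k\ge 2$ (and the fact that $\partial_{z_1}$ annihilates $\bar z_1^{1-\beta}$), the chain rule yields the clean identities
\[
u_{V,1\bar 1} = |z_1|^{2-2\beta} u_{z_1\bar z_1},\;\; u_{V,1\bar k} = z_1^{1-\beta} u_{z_1\bar z_k},\;\; u_{V,k\bar 1} = \bar z_1^{1-\beta} u_{z_k\bar z_1},\;\; u_{V,k\bar l} = u_{z_k\bar z_l}
\]
for $k,l \in \{2,\dots,n\}$. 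The first and last expressions belong to $C^\alpha_\beta(B_1)$ directly by (D4), with norm bounded by $\|u\|_{C^{2,\alpha}_\beta}$, and the distance identification from the previous paragraph delivers the claimed $C^\alpha_V$-bound on the chart, uniformly in $Z_0$.

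The mixed terms $u_{V,1\bar k}$ and $u_{V,k\bar 1}$ are the delicate ones. Factor $u_{V,1\bar k} = e^{i(1-\beta)\theta}\, g$ with $g := |z_1|^{1-\beta} u_{z_1\bar z_k}$. By (D3), $g \in C^\alpha_{\beta,0}$ with $\|g\|_{C^\alpha_\beta} \le \|u\|_{C^{2,\alpha}_\beta}$. Two observations work in tandem: (i) because $g$ vanishes on $\{z_1=0\}$ and every point of the chart sits within cone distance $O(\rho_0)$ of the divisor, one obtains the decay $\|g\|_{C^0(B_{c_\beta \rho_0}(v_0))} \le C\rho_0^\alpha\|u\|_{C^{2,\alpha}_\beta}$; (ii) the phase $e^{i(1-\beta)\theta} = (v_1/|v_1|)^{(1-\beta)/\beta}$ is smooth on the chart with $C^0$-norm $1$ and gradient of order $\rho_0^{-1}$, giving a $C^\alpha_V$-seminorm of order $\rho_0^{-\alpha}$. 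The product rule then yields
\[
[u_{V,1\bar k}]_{C^\alpha_V} \le \|e^{i(1-\beta)\theta}\|_{C^0}\,[g]_{C^\alpha_V} + [e^{i(1-\beta)\theta}]_{C^\alpha_V}\,\|g\|_{C^0} \le C\|u\|_{C^{2,\alpha}_\beta},
\]
where the apparent blow-up $\rho_0^{-\alpha}$ is exactly absorbed by the decay $\rho_0^\alpha$. The $C^0$-bound on $u_{V,1\bar k}$ is immediate, and $u_{V,k\bar 1}$ is symmetric. The main obstacle is precisely this cancellation: it would fail if (D3) gave only $C^\alpha_\beta$ membership rather than the stronger $C^\alpha_{\beta,0}$ condition, so the proof hinges crucially on the vanishing of $g$ along the divisor.
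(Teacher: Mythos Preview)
Your proof is correct and follows the same two-step scaffold as the paper: first identify the cone distance with the Euclidean $V$-distance on the chart, then express each $u_{V,i\bar j}$ via the chain rule and bound it using (D3)--(D4). The diagonal and purely tangential entries are handled identically in both arguments.

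For the mixed term $u_{V,1\bar k}$ your treatment is actually more careful than the paper's. The paper asserts that $|\theta_1-\theta_2|\le C|V_1-V_2|$ on $B_{c_\beta\rho_0}(v_0)$ with $C$ independent of $Z_0$, and concludes that the phase $e^{i(1-\beta)\theta}$ has uniformly bounded $C^\alpha_V$-norm. But on this chart one only has $|\theta_1-\theta_2|\le C\rho_0^{-1}|V_1-V_2|$ (take two points with equal $\rho,\xi$), so the H\"older seminorm of the phase is genuinely of order $\rho_0^{-\alpha}$, exactly as you compute. Your remedy---using the $C^\alpha_{\beta,0}$ vanishing in (D3) to get $\|g\|_{C^0(\text{chart})}\le C\rho_0^\alpha$ and letting this absorb the $\rho_0^{-\alpha}$---is the right mechanism, and it explains why Donaldson's definition insists on $C^\alpha_{\beta,0}$ rather than merely $C^\alpha_\beta$ for those entries. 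So your argument both matches the paper's route and patches the one soft spot in it.
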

\begin{proof}
	First of all, we note that the distance functions involved in the definition of $C_V^\alpha(B_{c_\beta\rho_0}(v_0))$ and $C_\beta^{2,\alpha}(B_1)$ are the same. To see this, we consider two points $(\rho_1,\theta_1,\xi_1)$ and $(\rho_2,\theta_2,\xi_2)$ with
	\begin{equation*}
		\rho_1,\rho_2\in (\rho_0/2,2\rho_0) \quad \mbox{and} \quad \theta_1,\theta_2 \in (\theta_0-0.1, \theta_0+0.1).
	\end{equation*}
	The distance between them with respect to $g_{cone}= d\rho^2+ \rho^2 \beta^2 d\theta^2 + d\xi^2$ is the square root of
	\begin{equation}\label{eqn:distance}
		\abs{\xi_1-\xi_2}^2 + \abs{ (\rho_1 \cos (\beta\theta_1)- \rho_2 \cos (\beta\theta_2))}^2 + \abs{ (\rho_1 \sin (\beta\theta_1)- \rho_2 \sin (\beta\theta_2))}^2.
	\end{equation}
	A nice way to compute this distance is to set $\eta_i=\beta\theta_i$ for $i=1,2$ and notice that $\abs{\eta_1-\eta_2}<0.2$. On the other hand, the $V$-coordinates of $(\rho_1,\theta_1,\xi_1)$ and $(\rho_2,\theta_2,\xi_2)$ are 
	\begin{equation*}
		(\rho_1 e^{\beta \theta_1},\xi_1) \quad \mbox{and} \quad (\rho_2 e^{\beta \theta_2},\xi_2),
	\end{equation*}
	so that the Euclidean distance between them in $B_{c_\beta \rho_0}(v_0)$ is the same as the square root of \eqref{eqn:distance}.

	Next, we study the relation between $u_{V,i\bar{j}}$ with the quantities in (D1-D4). Since
	\begin{equation*}
		u_{V,1\bar{1}}=\abs{z_1}^{2-2\beta} u_{1\bar{1}} \quad \mbox{and} \quad u_{V,k\bar{l}}= u_{k\bar{l}} \quad (k,l\ne 1),
	\end{equation*}
	they appear in (D1-D4) directly and hence are in $C_\beta^\alpha(B_1)$ and (by what we have just proved) in $C_V^\alpha(B_{c_\beta \rho_0}(v_0))$ when restricted to $B_{c_\beta \rho_0}(v_0)$. When $k\ne 1$, we claim that the $C_V^{\alpha}(B_{c_\beta \rho_0}(v_0))$ norm of 
	\begin{equation}\label{eqn:u1k}
		u_{V,1\bar{k}}= \left( \frac{z_1}{\abs{z_1}} \right)^{1-\beta} \abs{z_1}^{1-\beta} u_{1\bar{k}}
	\end{equation}
	is independent of $Z_0$. By (D3) and the first part of the proof, we know the $C_V^\alpha(B_{c_\beta \rho_0}(v_0))$ norm of $\abs{z_1}^{1-\beta} u_{1\bar{k}}$ is independent of $Z_0$. For any two points $V_1=(\rho_1e^{i\beta \theta_1},\xi_1)$ and $V_2=(\rho_2 e^{i\beta \theta_2},\xi_2)$ in $B_{c_\beta \rho_0}(v_0)$, we have
	\begin{equation*}
		\abs{\theta_1-\theta_2}\leq C \abs{V_1-V_2},
	\end{equation*}
	which implies that the $C_V^\alpha(B_{c_\beta \rho_0}(v_0))$ norm of $\left( \frac{z_1}{\abs{z_1}} \right)^{1-\beta}= e^{i(1-\beta)\theta}$ is independent of $Z_0$ so that the claim is proved. 
	
	The proof of the lemma is finished because the same proof also works for $u_{V,k\bar{1}}$.
\end{proof}

\subsection{The complex Monge-Amp\`ere equation}\label{subsec:equation}
The aim of this section is to rewrite the equation
\begin{equation}
	\det (\varphi_{i\bar{j}}) = \frac{e^{\lambda \varphi + h}}{\abs{z_1}^{2-2\beta}}. 
	\label{eqn:MA1}
\end{equation}
into a form that will be useful in the proof of Theorem \ref{thm:main}. More precisely, we want to
\begin{itemize}
	\item put the equation into the polar coordinates;
	\item move anything other than $\tilde{\triangle} \varphi$ (see (P3) above) to the right hand side;
	\item characterize the nonlinear structure of the right hand side in a useful form.
\end{itemize}
It turns out that we need to carry this out not only for $\varphi$ but also for all its tangent derivatives.

Multiplying the first row and the first column of $\varphi_{i\bar{j}}$ by $\abs{z_1}^{1-\beta}$, we obtain from \eqref{eqn:MA1}
\begin{equation}
\det \left(
\begin{array}[]{cc}
	\abs{z_1}^{2(1-\beta)} \varphi_{1\bar{1}} & \abs{z_1}^{1-\beta} \varphi_{1\bar{\xi}} \\
	\abs{z_1}^{1-\beta} \varphi_{\xi \bar{1}} & \varphi_{\xi\bar{\xi}} 
\end{array}
\right)
= e^{\lambda \varphi + h}.
	\label{eqn:MA4}
\end{equation}
Here $\varphi_{1\bar{\xi}}$ is the row vector $(\varphi_{1\bar{2}}, \cdots, \varphi_{1\bar{n}})$, $\varphi_{\xi\bar{1}}$ is the column vector $(\varphi_{2\bar{1}},\cdots,\varphi_{n\bar{1}})$ and $\varphi_{\xi\bar{\xi}}$ is the matrix $\left( \varphi_{i\bar{j}} \right)_{2\leq i,j\leq n}$.

Setting $P= \abs{z_1}^{1-\beta}\partial_{z_1}$ and recalling that we have defined 
\begin{eqnarray*}
	\tilde{\triangle} &=&  \partial_\rho^2 + \frac{1}{\rho}\partial_\rho + \frac{1}{\beta^2 \rho^2} \partial_\theta^2 \\
&=& 4 \abs{z_1}^{2(1-\beta)}\frac{\partial^2}{\partial z_1 \partial \bar{z}_1},
\end{eqnarray*}
we obtain
\begin{equation}
\det\left(
\begin{array}[]{cc}
	1/4 \tilde{\triangle} \varphi & P \varphi_{\bar{\xi}} \\
	\bar{P} \varphi_{\xi} & \varphi_{\xi\bar{\xi}} 
\end{array}
\right)
= \exp(\lambda\varphi+h).
	\label{eqn:MA5}
\end{equation}
Finally, let's expand the left hand side of \eqref{eqn:MA5} by the definition of $\det$ to see
\begin{equation*}\tag{A}
	\det \left(\varphi_{\xi\bar{\xi}} \right) \tilde{\triangle} \varphi = (P\varphi_{\bar{\xi}}\cdot \bar{P} \varphi_\xi)\# F(\varphi_{\xi\bar{\xi}}) + 4 \exp (\lambda \varphi +h)
\end{equation*}
Here $P\varphi_{\bar{\xi}}\cdot \bar{P} \varphi_\xi$ stands for a term $P \varphi_{\bar{j}}\cdot \bar{P} \varphi_i$ for $i,j=2,\cdots,n$, $F(\varphi_{\xi\bar{\xi}})$ is a polynomial (with constant coefficients) of $\varphi_{i\bar{j}} (i,j=2,\cdots,n)$ and $\#$ means a sum of products of $P\varphi_{\bar{\xi}}\cdot \bar{P}\varphi_\xi$ and $F(\varphi_{\xi\bar{\xi}})$. The importance of the fact that the $P$ derivatives come in conjugate pairs can be seen from the following lemma.
\begin{lem}
	\label{lem:pair} Let $f$ and $g$ be any complex valued functions. $Pf \cdot \bar{P} g$ is a quadratic polynomial (with constant coefficients) of $\partial_\rho f, \partial_\rho g, \frac{1}{\rho}\partial_\theta f$ and $\frac{1}{\rho}\partial_\theta g$.
\end{lem}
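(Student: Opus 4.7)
The strategy is a direct computation: express $P$ in polar coordinates, take its complex conjugate to obtain $\bar P$, and observe that the oscillating factors $e^{\pm i\theta}$ cancel in the product.

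First I would rewrite $P = \abs{z_1}^{1-\beta}\partial_{z_1}$ using the standard identities $\partial_{z_1}=\frac{1}{2}e^{-i\theta}\bigl(\partial_r-\tfrac{i}{r}\partial_\theta\bigr)$ and $r=|z_1|=(\beta\rho)^{1/\beta}$, which give $r^{1-\beta}\partial_r=\partial_\rho$ and $r^{-\beta}=(\beta\rho)^{-1}$. The outcome is the compact formula
\begin{equation*}
P=\tfrac12\,e^{-i\theta}\Bigl(\partial_\rho-\tfrac{i}{\beta\rho}\partial_\theta\Bigr),\qquad
\bar P=\tfrac12\,e^{i\theta}\Bigl(\partial_\rho+\tfrac{i}{\beta\rho}\partial_\theta\Bigr).
\end{equation*}

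Next I would multiply $Pf$ and $\bar Pg$. The scalar factors $e^{-i\theta}$ and $e^{i\theta}$ multiply to $1$, which is exactly the ``conjugate pair'' cancellation advertised before the lemma. Expanding the remaining product yields
\begin{equation*}
Pf\cdot\bar Pg=\tfrac14\,\partial_\rho f\,\partial_\rho g+\tfrac{i}{4\beta}\,\partial_\rho f\cdot\tfrac{1}{\rho}\partial_\theta g-\tfrac{i}{4\beta}\,\tfrac{1}{\rho}\partial_\theta f\cdot\partial_\rho g+\tfrac{1}{4\beta^2}\,\tfrac{1}{\rho}\partial_\theta f\cdot\tfrac{1}{\rho}\partial_\theta g,
\end{equation*}
which is manifestly a quadratic polynomial, with constant coefficients depending only on $\beta$, in the four functions $\partial_\rho f,\partial_\rho g,\tfrac{1}{\rho}\partial_\theta f,\tfrac{1}{\rho}\partial_\theta g$.

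There is no real obstacle here; the content of the lemma is the $\theta$-independence of $Pf\cdot\bar Pg$, and once the polar expression for $P$ is written down this is immediate. The only thing to watch is that one must pair $P$ with $\bar P$ (and not, say, $P$ with $P$), for otherwise one would be left with a residual factor of $e^{-2i\theta}$ and the statement would fail. This is precisely why in identity (A) following \eqref{eqn:MA5} the $P$ derivatives appear only in conjugate pairs $P\varphi_{\bar\xi}\cdot\bar P\varphi_\xi$, and this structural feature of the Monge--Amp\`ere equation is what makes the lemma applicable in the ensuing analysis.
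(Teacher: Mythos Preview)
Your proof is correct and is essentially the same direct computation the paper performs; the only cosmetic difference is that you write $P=\tfrac12 e^{-i\theta}\bigl(\partial_\rho-\tfrac{i}{\beta\rho}\partial_\theta\bigr)$ in exponential form whereas the paper expands the factor $e^{-i\theta}$ into $\cos\theta$ and $\sin\theta$ before multiplying. The resulting explicit formula for $Pf\cdot\bar Pg$ matches exactly.
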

\begin{proof}
	By the definition of $P$, we have
\begin{equation*}
	P = \frac{1}{2} \left[  (\cos \theta -i \sin \theta) \partial_\rho + \beta^{-1}(-\sin \theta - i \cos \theta) \frac{1}{\rho}\partial_\theta \right].
\end{equation*}
The proof of this lemma is just the following direct computation,
\begin{eqnarray*}
	P f\cdot \bar{P} g &=& \frac{1}{4}\left( (\cos \theta -i \sin \theta)\partial_\rho f +\beta^{-1} (-\sin \theta -i \cos \theta) \frac{1}{\rho}\partial_\theta f \right) \\
	&& \cdot\left( (\cos \theta +i \sin \theta)\partial_\rho g +\beta^{-1} (-\sin \theta +i \cos \theta) \frac{1}{\rho}\partial_\theta g \right) \\
	&=& \frac{1}{4} \left( \partial_\rho f \partial_\rho g +\beta^{-2} (\frac{1}{\rho}\partial_\theta f) (\frac{1}{\rho} \partial_\theta g) +i \beta^{-1}\partial_\rho f (\frac{1}{\rho}\partial_\theta g) - i \beta^{-1}(\frac{1}{\rho}\partial_\theta f) \partial_\rho g\right) .
\end{eqnarray*}
\end{proof}

Next, we take tangential derivatives of the equation. By tangential derivative, we mean $\partial_{z_k}$ and $\partial_{\bar{z}_k}$ for $k=2,\cdots,n$. It turns out that the exact value of $k$ is not important so that we write $\partial_\xi$ and $\partial_{\bar{\xi}}$ for simplicity. For example, when we write
\begin{equation*}
	\varphi_{\xi\xi\bar{\xi}},
\end{equation*}
we mean
\begin{equation*}
	\partial_{z_{k_1}} \partial_{z_{k_2}} \partial_{\bar{z}_{k_3}} \varphi
\end{equation*}
for any $k_1,k_2,k_3=2,\cdots,n$. Moreover, we write $\chi$ for a finite sequence of $\xi$ and/or $\bar{\xi}$ and use a subscript of number to denote the length of the sequence. For example, by $\varphi_{\chi_2}$, we mean one of
\begin{equation*}
	\varphi_{\xi\xi}, \varphi_{\xi\bar{\xi}}, \varphi_{\bar{\xi}\xi}, \varphi_{\bar{\xi}\bar{\xi}}.
\end{equation*}

We shall derive an equation of $\varphi_{\chi}$ similar to (A). The key point is to show that the equation will have a similar right hand side as in (A). We claim that if the length of $\chi$ is $l$, then
\begin{align*}\tag{$A_\chi$}
	\det (\varphi_{\xi\bar{\xi}}) \tilde{\triangle} \varphi_\chi &=  F(\varphi_{\chi_2},\ldots,\varphi_{\chi_{l+2}})\# ( P\varphi_{\chi_a}\cdot \bar{P} \varphi_{\chi_b}) \\
	& +  H(h,\cdots,h_{\chi_l}, \varphi, \cdots, \varphi_{\chi_{l+2}}).
\end{align*}
Here $F$ and $H$ are smooth functions of their arguments and $a,b=1,\cdots,l+1$.
\begin{rem}
	For the rest of the paper, it is enough to know that $F$ and $H$ are smooth functions of $\varphi$ and $h$ and their tangential derivatives.
\end{rem}

The claim follows from direct computation and induction. Note that we may use $F$ and $H$ for different smooth functions in different lines below. Although the exact formula can be obtained, it is irrelevant to us.

Take $\partial_\xi$ of (A) to get
\begin{eqnarray}\label{eqn:A1}
	&& \det (\varphi_{\xi \bar{\xi}}) \tilde{\triangle} \varphi_\xi \\ \nonumber
	&=& - \partial_\xi \det(\varphi_{\xi\bar{\xi}}) \tilde{\triangle} \varphi + \partial_\xi \left( (P \varphi_{\bar{\xi}}\cdot \bar{P}\varphi_\xi) \# F(\varphi_{\xi\bar{\xi}}) + 4 \exp (\lambda \varphi +h)\right).
\end{eqnarray}
Noticing that
\begin{itemize}
	\item $\partial_\xi \det(\varphi_{\xi\bar{\xi}})$ and $\partial_\xi F(\varphi_{\xi\bar{\xi}})$ are smooth functions of $\varphi_{\xi\bar{\xi}}$ and $\varphi_{\xi\xi\bar{\xi}}$;
	\item $\partial_\xi (P\varphi_{\bar{\xi}}\cdot \bar{P}\varphi_\xi)$ is a sum of $P \varphi_{\xi\bar{\xi}}\cdot \bar{P} \varphi_\xi, P\varphi_{\bar{\xi}}\cdot \bar{P}\varphi_{\xi\xi}$;
	\item $\partial_\xi \exp(\lambda\varphi+h)$ is obviously a smooth function of $\varphi$ and $h$ and $\partial_\xi \varphi$, $\partial_{\xi} h$.
	\item as a consequence of (A),
		\begin{equation*}
			\tilde{\triangle} \varphi = \det(\varphi_{\xi\bar{\xi}})^{-1} \left(P\varphi_{\bar{\xi}} \cdot \bar{P} \varphi_\xi \# F + 4 \exp(\lambda\varphi+h)  \right),
		\end{equation*}
\end{itemize}
we conclude that the right hand side of \eqref{eqn:A1} is of the required form in ($A_\chi$) and thus the claim is proved if the length of $\chi$ is one. The general case shall follow by a similar computation which we omit.

By the assumption of the Theorem \ref{thm:main}, $\det (\varphi_{\xi\bar{\xi}})$ is H\"older continuous so that we can rewrite ($A_\chi$) as
\begin{align*}\tag{$B_\chi$}
	\det(\varphi_{\xi\bar{\xi}})(0,\xi) \tilde{\triangle} \varphi_\chi &=  F \# (P\varphi_{\chi_a} \cdot \bar{P} \varphi_{\chi_b})+ H  \\
	 &+ (\det(\varphi_{\xi\bar{\xi}})(0,\xi)- \det(\varphi_{\xi\bar{\xi}})(\rho,\theta,\xi)) \tilde{\triangle} \varphi_\chi. 
\end{align*}
Here $\det(\varphi_{\xi\bar{\xi}})(0,\xi)$ is short for $\det(\varphi_{\xi\bar{\xi}})(0,\theta,\xi)$ since it is independent of $\theta$.

\section{Interior estimates for $\varphi$}\label{sec:interior}
In this section, we prove higher order estimates for the potential function $\varphi$ away from the singular set. It is quite natural that as one gets closer and closer to the singular set, the estimates become worse and worse. Such a phenomenon usually appears as a weighted derivative estimate. It serves as a starting point of the bootstrapping argument in the proof of the main result in this paper.
\begin{thm}\label{thm:interior}
	Let $\varphi$ be a $C^{2,\alpha}_\beta(B_1)$ solution of \eqref{eqn:MA1} as given in Theorem \ref{thm:main}. Then given any $k_1,k_2,k_3 \in \mathbb N\cup \set{0}$, we have
	\begin{equation}\label{eqn:interior}
		\abs{(\rho \partial_\rho)^{k_1} (\partial_\theta)^{k_2} (\nabla_\xi)^{k_3} \varphi}\leq C (k_1,k_2,k_3),\quad \forall \rho\in (0,1/2) \,\text{and}\, \abs{\xi}<1/2, 
	\end{equation}
	for some constant $C(k_1,k_2,k_3)$.
\end{thm}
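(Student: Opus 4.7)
The plan is to reduce the estimate, away from the singular set $\{z_1=0\}$, to a standard complex Monge--Amp\`ere regularity problem by moving into the (scaled) lifted holomorphic coordinates $\tilde V$ introduced in Section \ref{subsec:coordinate}. Indeed, in the $V$-coordinates the equation \eqref{eqn:MA1} rewrites as $\det \varphi_{V,i\bar j}=e^{\lambda\varphi+h}$, which by assumption (S3) is \emph{uniformly elliptic}; the singular set has disappeared. Rescaling to the $\tilde V$-coordinates around a base point $Z_0=(\rho_0,\theta_0,\xi_0)$ with $\rho_0\in(0,1/2)$ brings the problem onto the fixed ball $B_{c_\beta}\subset\mathbb C^n$, where Lemma \ref{lem:metricinV} supplies uniform $C^\alpha_{\tilde V}$ bounds for the mixed Hessian $\varphi_{\tilde V,i\bar j}$ and Corollary \ref{cor:h} supplies uniform $C^k_{\tilde V}$ bounds for $h$.

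Concretely, I would carry out a Schauder bootstrap separately for each tangential derivative $\phi:=\nabla_\xi^{k_3}\varphi$. Donaldson's linear theory, referred to in Section \ref{sec:intro}, guarantees that $\phi$ remains in $C^{2,\alpha}_\beta(B_1)$ uniformly, so $\|\phi\|_{C^0}$ is bounded independently of $Z_0$. Differentiating $\log\det \varphi_{V,i\bar j}=\lambda\varphi+h$ in the tangential directions $k_3$ times produces a linear elliptic equation
\[
 \varphi^{V,i\bar j}\phi_{V,i\bar j}=\lambda\phi+F_{k_3}(h,\nabla_\xi h,\ldots,\varphi,\nabla_\xi\varphi,\ldots),
\]
with uniformly elliptic and $C^\alpha$ coefficients, and with inhomogeneity $F_{k_3}$ involving only lower-order tangential derivatives, controlled inductively. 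In the $\tilde V$-coordinates this equation reads
\[
 a^{i\bar j}(\tilde v)\phi_{\tilde V,i\bar j}=\rho_0^2(\lambda\phi+F_{k_3})
\]
on $B_{c_\beta}$, and the extra factor $\rho_0^2\le 1/4$ is harmless. Standard interior Schauder on the fixed domain then gives $\|\phi\|_{C^{2,\alpha}_{\tilde V}(B_{c_\beta/2})}\le C$ uniformly in $Z_0$. Iterating this step, while bootstrapping $\varphi$ itself in parallel to upgrade the regularity of the coefficients $\varphi^{V,i\bar j}$, yields $\|\phi\|_{C^{k,\alpha}_{\tilde V}}\le C(k)$ for every $k$. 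Finally, the implication (ii)$\Rightarrow$(i) of Lemma \ref{lem:tildeW}, applied to $\phi$ with the tangential weight in the lemma set to zero, converts these uniform $\tilde V$-bounds into
\[
	\abs{(\rho\partial_\rho)^{k_1}\partial_\theta^{k_2}\phi}=\abs{(\rho\partial_\rho)^{k_1}\partial_\theta^{k_2}\nabla_\xi^{k_3}\varphi}\le C(k_1,k_2,k_3),
\]
which is exactly \eqref{eqn:interior}.

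The main obstacle is the coupled bootstrap. The rescaling $u=\rho_0^{-2}\varphi(v_0+\rho_0\tilde v)$ that is needed to uniformise the ellipticity of the Monge--Amp\`ere equation is itself unbounded in $C^0$ as $\rho_0\to0$, while only its mixed Hessian stays uniformly bounded; so the Schauder iterations for $\varphi$ must be run either at the level of the Hessian or on $u$ modulo its affine Taylor part at $\tilde v=0$, and one has to track carefully that the constants in these iterations are indeed independent of $Z_0$. Notice also that applying Lemma \ref{lem:tildeW} to the tangential derivative $\phi$ rather than to $\varphi$ directly is precisely what eliminates the spurious $\rho^{k_3}$ weight on $\nabla_\xi^{k_3}$ that a direct application of the lemma to $\varphi$ would introduce.
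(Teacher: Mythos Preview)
Your overall strategy---pass to the $\tilde V$-coordinates around $Z_0$, use the uniform $C^\alpha_{\tilde V}$ bound on the mixed Hessian from Lemma \ref{lem:metricinV}, and run a Schauder bootstrap on the tangential derivatives $\phi=\nabla_\xi^{k_3}\varphi$---matches the paper's, and you correctly isolate the obstacle: the rescaled potential $\tilde\varphi=\rho_0^{-2}\varphi$ blows up in $C^0$ as $\rho_0\to 0$, so one must work ``at the level of the Hessian''. The gap is that you do not supply a mechanism that actually yields \emph{uniform} higher regularity of the coefficients $g_{i\bar j}=\tilde\varphi_{\tilde V,i\bar j}$. A naive bootstrap fails: the first Schauder step applied to $\nabla_{\tilde V}\tilde\varphi$ (whose $C^0$ norm is only $O(\rho_0^{-1})$) gives $\norm{\nabla g_{i\bar j}}_{C^{\alpha}_{\tilde V}}\le C\rho_0^{-1}$, not $\le C$; feeding this back makes subsequent Schauder constants depend on $\rho_0^{-1}$ and the iteration deteriorates. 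Working directly with the second-order system obtained by differentiating $\log\det g=\rho_0^2(\lambda\tilde\varphi+h)$ twice, namely $\triangle_g g_{i\bar j}-g^{k\bar m}g^{n\bar l}\partial_n g_{i\bar m}\,\partial_{\bar l}g_{k\bar j}=\text{bounded}$, runs into the same wall: the nonlinearity is \emph{quadratic in $\nabla g$}, and ordinary Schauder does not upgrade a $C^\alpha$ solution of such a system to $C^{1,\alpha}$. Your alternative of subtracting the affine Taylor part of $\tilde\varphi$ does not help either, since the non-mixed second derivatives $\tilde\varphi_{\tilde V,ij}$ are not controlled by the $C^{2,\alpha}_\beta$ assumption.

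The paper closes exactly this gap: it treats the system for $g_{i\bar j}$ as an elliptic system with quadratic gradient growth and invokes Giaquinta-type estimates (proved in the Appendix, culminating in Lemma \ref{lem:gia}) to obtain a uniform $C^{1,\alpha'}$ bound on $g_{i\bar j}$ from the uniform $C^0$/$C^\alpha$ input. Only after this step does standard Schauder on the system give the uniform $C^k$ bounds for $g_{i\bar j}$ (Lemma \ref{lem:goodmetric}) and for $\nabla_{\tilde V,\xi}^l g_{i\bar j}$ with the correct $\rho_0^l$ weight (Lemma \ref{lem:goodtangent}); the latter is what makes your inhomogeneity $F_{k_3}$ genuinely controllable in the induction on $k_3$. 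With these coefficient estimates in hand, the rest of your outline---Schauder on $\phi$ and conversion via Lemma \ref{lem:tildeW}---is exactly what the paper does. So the missing ingredient is not organizational but analytic: a $C^\alpha\Rightarrow C^{1,\alpha'}$ estimate for the metric system with quadratic gradient nonlinearity, uniformly in $\rho_0$.
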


\begin{rem}
	We remark that the same estimate was proved as the Step 1 in Section 4 of \cite{jeffres2011k}. We still include a complete proof here because (1) we can not follow the scaling argument on page 135 of \cite{jeffres2011k} and (2) since we have assumed Donaldson's estimate, i.e. the H\"older continuity of the complex Hessian of $\varphi$, it looks a bit strange if we use the Evans-Krylov theorem again.
\end{rem}

By Lemma \ref{lem:tildeW}, it suffices to show that for any $Z_0=(\rho_0,\theta_0,\xi_0)$ with $\rho_0\ne 0$, any derivatives of $(\nabla_\xi)^{k_3}\varphi$ in scaled lifted holomorphic coordinates around $Z_0$ are uniformly bounded. Hence in what follows, we have $Z_0$ fixed and work in the $\tilde{V}$-coordinates around it.

The proof is still a scaling argument. However, we find it necessary to study first the scaling of the complex Hessian of $\varphi$, which may be and should be regarded as a K\"ahler metric and which satisfies an elliptic system. By using some well known estimates for this elliptic system, we show that the complex Hessian has the desired weighted estimates, or equivalently, it is reasonably bounded in the scaled lifted holomorphic coordinates (see Lemma \ref{lem:tildeW}). This is the purpose of Lemma \ref{lem:goodmetric} and Lemma \ref{lem:goodtangent}, which we prove in the next subsection.

\subsection{Metric (complex Hessian) in (scaled) lifted holomorphic coordinates}
First, we put \eqref{eqn:MA1} in the lifted holomorphic coordinates around $Z_0=(\rho_0,\theta_0,\xi_0)$. For that purpose, we multiply the first column of $\varphi_{i\bar{j}}$ by $\bar{z}_1^{1-\beta}$ and the first row by $z_1^{1-\beta}$ and notice that $\partial_{v_1}=z_1^{1-\beta} \partial_{z_1}$ and  $\partial_{\bar{v}_1}=\bar{z}_1^{1-\beta} \partial_{\bar{z}_1}$ to get
\begin{equation}\label{eqn:MAinV}
	\log \det (\varphi_{V,i\bar{j}}) = \lambda \varphi +h.	
\end{equation}
The assumptions in Theorem \ref{thm:main} implies
\begin{equation}\label{eqn:varphic0}
	\frac{1}{c}\delta_{ij} \leq \varphi_{V,i\bar{j}}\leq c \delta_{ij},  \quad \mbox{by (S3)}
\end{equation}
and
\begin{equation}\label{eqn:Gcalpha}
	\norm{\varphi_{V,i\bar{j}}}_{C_V^\alpha(B_{c_\beta \rho_0}(v_0))}\leq C, \quad \mbox{by Lemma \ref{lem:metricinV}}.
\end{equation}
Donaldson's Schauder estimate and \eqref{eqn:MA1} show that all tangent derivatives of $\varphi$ are bounded in $C_\beta^{2,\alpha}$ space so that Lemma \ref{lem:metricinV} again implies that
\begin{equation}\label{eqn:Xigc0}
	\norm{ (\nabla_{V,\xi}^l \varphi)_{V,i\bar{j}}}_{C_V^\alpha(B_{c_\beta \rho_0}(v_0))}\leq C(l).
\end{equation}
Here $l$ is any natural number and $C(l)$ depends on $l$ but not on $Z_0$.

Next, we move to work in scaled lifted holomorphic coordinates by setting
\begin{equation}\label{eqn:setting}
	\tilde{\varphi}(\tilde{v})=\rho_0^{-2} \varphi(v_0+\tilde{v}\rho_0). 
\end{equation}
\eqref{eqn:varphic0}, \eqref{eqn:Gcalpha} and \eqref{eqn:Xigc0} imply respectively 
\begin{equation}\label{eqn:Gc0}
	\frac{1}{c}\delta_{ij} \leq \tilde{\varphi}_{\tilde{V},i\bar{j}}\leq c \delta_{ij},
\end{equation}
\begin{equation}\label{eqn:gcalpha}
	\norm{\tilde{\varphi}_{\tilde{V},i\bar{j}}}_{C_{\tilde{V}}^\alpha(B_{c_\beta})}\leq C
\end{equation}
and for $l=1,2,\cdots$,
\begin{equation}\label{eqn:xigc0}
	\norm{ (\nabla_{\tilde{V},\xi}^l \tilde{\varphi})_{\tilde{V},i\bar{j}}}_{C^0(B_{c_\beta})}\leq C \rho_0^{l}.
\end{equation}
If we denote 
$$g_{i\bar{j}}:=\tilde{\varphi}_{\tilde{V},i\bar{j}}, \quad \text{for} \, i,j=1,\cdots,n$$
then $g_{i\bar{j}}$ is a good metric with $C^\alpha$ control in $B_{c_\beta}$ by \eqref{eqn:Gc0} and \eqref{eqn:gcalpha}. 

In order to obtain higher order estimates, we derive an elliptic system satisfied by $g_{i\bar{j}}$. By \eqref{eqn:setting}, taking $\partial_{\tilde{v}_i}\partial_{\bar{\tilde{v}}_j}$ of \eqref{eqn:MAinV} and using $\partial_k g_{i\bar{j}}=\partial_i g_{k\bar{j}}$ (which is the K\"ahler condition if we regard $g_{i\bar{j}}$ as a K\"ahler metric, or the switching order of derivatives if we regard $g_{i\bar{j}}$ as the complex Hessian), we get
\begin{equation}
	\triangle_g g_{i\bar{j}} - g^{k\bar{m}} g^{n\bar{l}} \pfrac{g_{i\bar{m}}}{\tilde{v}_n} \pfrac{g_{k\bar{j}}}{\bar{\tilde{v}}_{{l}}} = \lambda\rho_0^2 g_{i\bar{j}} + h_{\tilde{V},i\bar{j}},
	\label{eqn:gij}
\end{equation}
where $\triangle_g = g^{i\bar{j}}\frac{\partial^2}{\partial \tilde{v}_i \partial \bar{\tilde{v}}_j}$.
If we neglect the dependence of $\triangle_g$ on $g_{i\bar{j}}$ itself, this is a quasilinear elliptic system whose nonlinear term is quadratic in the gradient of the unknown. By Corollary \ref{cor:h}, the only non-homogeneous term in \eqref{eqn:gij} satisfies
\begin{equation}\label{eqn:goodh}
	\norm{h_{\tilde{V},i\bar{j}}}_{C_{\tilde{V}}^k(B_{c_\beta})}\leq C(k) \qquad \text{independent of} \quad Z_0.
\end{equation}

There is a rich theory about elliptic systems of this kind. In particular, the theorems in Chapter VI of \cite{giaquinta1983multiple} imply that there exists some $\alpha'>0$ such that for any $\eta>0$ 
\begin{equation}\label{c1alpha}
	\norm{g_{i\bar{j}}}_{C_{\tilde{V}}^{1,\alpha'}(B_{c_\beta-\eta})}\leq C
\end{equation}
for some constant $C$ depending on $\eta$, $\alpha'$, $c$ in \eqref{eqn:Gc0} and the constant in \eqref{eqn:gcalpha}.
For the reader's convenience, we include a complete proof of this in the Appendix and what we have just used is Lemma \ref{lem:gia} there.

With \eqref{c1alpha} and \eqref{eqn:goodh}, the usual Schauder estimates applied to \eqref{eqn:gij} give higher order estimates of $g_{i\bar{j}}$. That is, 
\begin{lem}
	There are constants $C(k)$ independent of $Z_0$ such that
	\begin{equation*}
		\norm{g_{i\bar{j}}}_{C^k_{\tilde{V}}(B_{3c_\beta/4})} \leq C(k).
	\end{equation*}
	\label{lem:goodmetric} 
\end{lem}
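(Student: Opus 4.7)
The plan is a standard bootstrapping argument applied to the quasilinear elliptic system \eqref{eqn:gij}, starting from the $C^{1,\alpha'}$ control already established in \eqref{c1alpha} and iteratively upgrading the regularity of $g_{i\bar j}$ on slightly shrunken balls. Every ingredient needed is already in place: the system is uniformly elliptic by \eqref{eqn:Gc0}, the inhomogeneous term $h_{\tilde V,i\bar j}$ has $Z_0$-independent bounds of all orders by \eqref{eqn:goodh}, and the $C^{1,\alpha'}$ estimate \eqref{c1alpha} is also uniform in $Z_0$. The conclusion therefore ultimately reduces to invoking the standard interior Schauder estimates for linear elliptic equations of second order.

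Concretely, I would first rewrite \eqref{eqn:gij} in the form
\begin{equation*}
  g^{k\bar l}\,\partial_{\tilde v_k}\partial_{\bar{\tilde v}_l} g_{i\bar j}
   \;=\; g^{k\bar m} g^{n\bar l}\,\partial_{\tilde v_n} g_{i\bar m}\,\partial_{\bar{\tilde v}_l} g_{k\bar j} \;+\;\lambda\rho_0^2 g_{i\bar j} \;+\; h_{\tilde V,i\bar j},
\end{equation*}
and treat the left-hand side as a \emph{linear} elliptic operator with variable coefficients $g^{k\bar l}$, with the nonlinearity absorbed into the right-hand side. From \eqref{c1alpha} on, say, $B_{c_\beta-\eta}$, the coefficients $g^{k\bar l}$ belong to $C^{1,\alpha'}$ with bounds depending only on $\eta$, and the right-hand side is an algebraic combination of $C^{\alpha'}$ functions (the $g_{i\bar j}$ and $\partial g_{i\bar j}$), plus the smooth term $h_{\tilde V,i\bar j}$ controlled by Corollary \ref{cor:h} uniformly in $Z_0$. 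Since $\rho_0\le 1/2$, the factor $\lambda\rho_0^2$ is bounded, so the whole right-hand side has a $C^{\alpha'}$ norm controlled independently of $Z_0$.

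Interior Schauder estimates applied on a slightly smaller ball then give $g_{i\bar j}\in C^{2,\alpha'}$ with a bound depending only on the ellipticity constants, the $C^{\alpha'}$ norm of the coefficients, the $C^{\alpha'}$ norm of the right-hand side, and the shrink. Now I iterate: once $g_{i\bar j}$ is known to be in $C^{k,\alpha'}$ on some sub-ball $B_{r_k}$ with uniform bounds, the coefficients $g^{k\bar l}$ are again in $C^{k,\alpha'}$, while the right-hand side (a polynomial in $g_{i\bar j}$, $\partial g_{i\bar j}$, together with $h_{\tilde V,i\bar j}$) is in $C^{k-1,\alpha'}$ with uniform bounds; applying Schauder once more on a slightly smaller ball $B_{r_{k+1}}$ with $r_{k+1}<r_k$ produces a $C^{k+1,\alpha'}$ estimate. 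Choosing the radii so that $r_k\downarrow 3c_\beta/4$ (e.g.\ $r_k = 3c_\beta/4 + c_\beta/2^{k+3}$) yields the claimed bound $\|g_{i\bar j}\|_{C^k_{\tilde V}(B_{3c_\beta/4})}\le C(k)$ for every $k$.

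There is really no hard step here; the only point one must verify with care is the uniformity of all constants in $Z_0$. This is automatic because (i) the ellipticity constants in \eqref{eqn:Gc0} are absolute, (ii) the $C^{\alpha'}$ and $C^{1,\alpha'}$ bounds \eqref{eqn:gcalpha} and \eqref{c1alpha} are independent of $Z_0$, (iii) the tangential higher-order bounds \eqref{eqn:xigc0} come with powers of $\rho_0\le 1/2$, and (iv) the bound on $h_{\tilde V,i\bar j}$ in \eqref{eqn:goodh} is $Z_0$-independent. Hence at every step of the iteration the Schauder constants depend only on $k$ (and on the fixed radii shrinkage), giving the conclusion.
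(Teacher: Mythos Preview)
Your proposal is correct and follows exactly the approach taken in the paper: the paper simply remarks that, with the $C^{1,\alpha'}$ control \eqref{c1alpha} and the bound \eqref{eqn:goodh} on $h_{\tilde V,i\bar j}$, the usual Schauder estimates applied to \eqref{eqn:gij} give all higher order estimates. You have written out in detail the bootstrapping that the paper leaves implicit, including the explicit choice of shrinking radii, but the method is the same.
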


For the proof of Theorem \ref{thm:interior}, we also need estimates for $\nabla^l_{\tilde{V},\xi} g_{i\bar{j}}$. 
\begin{lem}For any $l\in \mathbb N$, there are constants $C(k,l)$ independent of $Z_0$ such that
	\begin{equation}
		\norm{ (\nabla_{\tilde{V},\xi})^l g_{i\bar{j}}}_{C^k_{\tilde{V}}(B_{c_\beta/2})} \leq C(k,l)\rho^l.
		\label{eqn:goodtangent}
	\end{equation}
	\label{lem:goodtangent}
\end{lem}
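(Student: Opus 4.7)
The plan is to induct on $l$, with the base case $l=0$ supplied by Lemma \ref{lem:goodmetric}. For the inductive step I would fix $l \geq 1$, let $D$ stand for any tangential derivative $\partial_{\tilde v_k}$ or $\partial_{\bar{\tilde v}_k}$ with $k \geq 2$, and set $u := D^{l} g_{i\bar j}$. Applying $D^{l}$ to equation \eqref{eqn:gij} and expanding via the Leibniz rule, the top-order part in $u$ lands on $g^{\alpha\bar\beta}\partial_\alpha\partial_{\bar\beta} u$, while everything else is grouped into a right-hand side $R$, yielding a linear elliptic equation of the schematic form
\begin{equation*}
g^{\alpha\bar\beta}\partial_\alpha\partial_{\bar\beta} u \;=\; R,
\end{equation*}
where $R$ is a finite sum of products whose factors are of the shape $D^{l'} g_{i\bar j}$, $D^{l'} g^{\alpha\bar\beta}$, $D^{l'} h_{\tilde V,i\bar j}$, or $\rho_0^{2} D^{l'} g_{i\bar j}$, with total number of tangential derivatives equal to $l$ (after absorbing the finitely many top-order terms in $u$ itself into the left-hand side).

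The crucial observation is that every such factor scales like $\rho_0^{l'}$: for $D^{l'} g_{i\bar j}$ this is precisely the inductive hypothesis; for $D^{l'} g^{\alpha\bar\beta}$ it follows by combining \eqref{eqn:Gc0} with the inductive hypothesis via the matrix-inverse formula; and for $D^{l'} h_{\tilde V,i\bar j}$ it follows from Corollary \ref{cor:h} together with the identity $\partial_{\tilde v_k} = \rho_0\,\partial_{z_k}$ for $k\geq 2$, which converts uniformly bounded tangential $z$-derivatives of $h$ into tangential $\tilde v$-derivatives of size $\rho_0^{l'}$. Summing over all Leibniz terms, each contribution carries exactly $l$ tangential derivatives, so $\|R\|_{C^k_{\tilde V}(B_{r})} \leq C(k,l)\,\rho_0^{l}$ on any ball on which the inductive hypothesis is known to hold.

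With this bound on $R$ and with the $C^{k}_{\tilde V}$ control on the coefficients $g^{\alpha\bar\beta}$ from Lemma \ref{lem:goodmetric}, standard interior Schauder estimates for linear elliptic equations give $\|u\|_{C^{k+2}_{\tilde V}(B_{r'})} \leq C(k,l)\,\rho_0^{l}$ on a slightly smaller ball $B_{r'}\subset B_{r}$. To close the induction up to radius $c_\beta/2$ I would set up a nested family of radii $r_l \searrow c_\beta/2$ so that at each inductive step Schauder is applied from $B_{r_{l-1}}$ to $B_{r_l}$; the final constant $C(k,l)$ is finite since only finitely many such shrinkings occur for each fixed $(k,l)$.

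The main obstacle is purely bookkeeping: tracking that the Leibniz expansion of the quadratic nonlinearity and of $\triangle_g$ distributes precisely $l$ tangential derivatives among the factors, so that the $\rho_0^{l}$ scaling is respected uniformly in $k$, and arranging the nested domains to feed the inductive hypothesis into Schauder at each step. No new analytic input beyond Lemma \ref{lem:goodmetric}, Corollary \ref{cor:h}, the smoothness of $h$, and the classical linear Schauder theory is required.
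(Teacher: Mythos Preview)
Your inductive scheme via the Leibniz rule and Schauder estimates mirrors the paper's approach, but there is a genuine gap in how you close the induction. The interior Schauder estimate for the linear equation you obtain (with lower-order terms absorbed into the operator) has the form
\[
\|u\|_{C^{k+2,\alpha}_{\tilde V}(B_{r'})} \;\leq\; C\bigl(\|u\|_{C^0(B_r)} + \|R\|_{C^{k,\alpha}_{\tilde V}(B_r)}\bigr),
\]
so you need the $C^0$ norm of $u=(\nabla_{\tilde V,\xi})^l g_{i\bar j}$ itself to be of size $\rho_0^{\,l}$. Your inductive hypothesis at level $l-1$ gives $\|(\nabla_{\tilde V,\xi})^{l-1} g\|_{C^k_{\tilde V}}\leq C\rho_0^{\,l-1}$ for all $k$; taking one more $\tilde V$-derivative yields only $\|u\|_{C^0}\leq C\rho_0^{\,l-1}$, one power short. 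The bound $\|R\|_{C^k_{\tilde V}}\leq C\rho_0^{\,l}$ by itself therefore cannot produce $\|u\|_{C^{k+2}_{\tilde V}}\leq C\rho_0^{\,l}$.

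The missing input is precisely \eqref{eqn:xigc0}: the a priori estimate $\|(\nabla_{\tilde V,\xi})^l g_{i\bar j}\|_{C^0(B_{c_\beta})}\leq C\rho_0^{\,l}$, which does \emph{not} come from the induction but from Donaldson's $C^{2,\alpha}_\beta$ regularity of the tangential derivatives of $\varphi$, fed through Lemma~\ref{lem:metricinV} and the scaling \eqref{eqn:setting}. The paper invokes this external $C^0$ bound at every step of the induction (see \eqref{eqn:schauder} for $l=1$ and the final sentence of the proof for general $l$). Once you insert \eqref{eqn:xigc0} into your Schauder step, your argument coincides with the paper's.
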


\begin{proof} Pick any sequence of $\eta_l$ satisfying
	\begin{equation*}
		0<\eta_1<\eta_2<\cdots < \frac{1}{4}c_\beta.
	\end{equation*}
	We will prove
	\begin{equation}
		\norm{ (\nabla_{\tilde{V},\xi})^l g_{i\bar{j}}}_{C^k_{\tilde{V}}(B_{3c_\beta/4-\eta_l})} \leq C(k,l)\rho^l,
		\label{eqn:goodeta}
	\end{equation}
	which is obviously stronger than \eqref{eqn:goodtangent}.
	For the proof below, we allow the constants to depend on this particular choice of $\eta_l$.

	Taking $\nabla_{\tilde{V},\xi}$ of \eqref{eqn:gij} yields
	\begin{equation}\label{eqn:extra}
		\begin{split}
		& \triangle_g (\nabla_{\tilde{V},\xi} g_{i\bar{j}}) + G_1(g,\nabla_{\tilde{V}} g, \nabla^2_{\tilde{V}} g) \# \nabla_{\tilde{V},\xi} g + G_2(g,\nabla_{\tilde{V}} g) \# \nabla_{\tilde{V}} (\nabla_{\tilde{V},\xi} g)\\
		=& (\nabla_{\tilde{V},\xi} h)_{\tilde{V},i\bar{j}},
		\end{split}
	\end{equation}
	which is a linear elliptic system in which we take $\nabla_{\tilde{V},\xi}g_{i\bar{j}}$ as the unknowns. By Lemma \ref{lem:goodmetric}, $g_{i\bar{j}}$, $G_1$ and $G_2$ are bounded in any $C^k$ norm on $B_{3c_\beta/4}$ so that the Schauder estimate gives 
	\begin{equation}\label{eqn:schauder}
		\norm{\nabla_{\tilde{V},\xi} g}_{C^{k+2,\alpha}_{\tilde{V}}(B_{(3c_\beta/4-\eta_1)})}\leq C \left(  \norm{\nabla_{\tilde{V},\xi} g}_{C^{0}(B_{c_\beta})} + \norm{\nabla_{\tilde{V},\xi} h}_{C^{k+2,\alpha}_{\tilde{V}}(B_{c_\beta})}\right).
	\end{equation}
	Since $h$ is a smooth function, we may apply Corollary \ref{cor:h} to $\nabla_{V,\xi} h$ to get 
	\begin{equation}\label{eqn:hh}
		\norm{\nabla_{V,\xi} h}_{C_{\tilde{V}}^{k+2,\alpha}(B_{c_\beta})}\leq C
	\end{equation}
	which is equivalent to
	\begin{equation}\label{eqn:h1}
		\norm{\nabla_{\tilde{V},\xi} h}_{C^{k+2,\alpha}_{\tilde{V}}(B_{c_\beta})}\leq C\rho_0.
	\end{equation}
	Combining \eqref{eqn:xigc0}, \eqref{eqn:h1} and \eqref{eqn:schauder} proves \eqref{eqn:goodeta} in case $l=1$.
\vskip 2mm

We now prove by induction that for $l\geq 2$, 
 $(\nabla_{\tilde{V},\xi})^l g_{i\bar{j}}$ satisfies 
\begin{equation}
		\begin{split}
			& \triangle_g (\nabla_{\tilde{V},\xi})^{l} g_{i\bar{j}} + G_1(g,\nabla_{\tilde{V}} g, \nabla^2_{\tilde{V}} g) \# (\nabla_{\tilde{V},\xi})^{l} g + G_2(g,\nabla_{\tilde{V}} g) \# \nabla_{\tilde{V}} ( (\nabla_{\tilde{V},\xi})^{l} g)\\
			=&  \left((\nabla_{\tilde{V},\xi})^l h  \right)_{\tilde{V},i\bar{j}} +  \mathcal H_{l},
		\end{split}
	\label{eqn:extral}
\end{equation}
where $\mathcal H_l$ is the sum of terms of the following form 
\begin{equation*}
	G(g, \nabla_{\tilde{V}} g, \nabla^2_{\tilde{V}} g) (\nabla_{\tilde{V}})^{a_1} (\nabla_{\tilde{V},\xi})^{b_1} g \cdot \cdots \cdot (\nabla_{\tilde{V}})^{a_s} (\nabla_{\tilde{V},\xi})^{b_s} g.
\end{equation*}
Here $2\leq s\leq l$ is an integer, for each $i=1,\cdots,s$, $a_i$ takes only three possible values, namely, $0,1$ or $2$ and $b_i$ is a positive integer strictly smaller than $l$, satisfying
\begin{equation*}
	b_1+\cdots+b_s=l.
\end{equation*}
Moreover, $G$, depending on $a_i$ and $b_i$, is a smooth function of its arguments. 

Obviously, when $l=1$, $\mathcal H_l$ must be zero and \eqref{eqn:extral} is exactly \eqref{eqn:extra}. Assume that \eqref{eqn:extral} holds for $l$. Direct computation gives
\begin{equation}
		\begin{split}
			& \triangle_g (\nabla_{\tilde{V},\xi})^{l+1} g_{i\bar{j}} + G_1(g,\nabla_{\tilde{V}} g, \nabla^2_{\tilde{V}} g) \# (\nabla_{\tilde{V},\xi})^{l+1} g + G_2(g,\nabla_{\tilde{V}} g) \# \nabla_{\tilde{V}} ( (\nabla_{\tilde{V},\xi})^{l+1} g)\\
			=&  \left((\nabla_{\tilde{V},\xi})^{l+1} h  \right)_{\tilde{V},i\bar{j}} +  \nabla_{\tilde{V},\xi} \mathcal H_{l}  + \nabla_{\tilde{V},\xi} g^{\cdot\cdot} \# (\nabla_{\tilde{V}})^2 (\nabla_{\tilde{V},\xi})^l g  \\
			& + \nabla_{\tilde{V},\xi} \left( G_1(g,\nabla_{\tilde{V}} g, \nabla^2_{\tilde{V}} g)\right) \# (\nabla_{\tilde{V},\xi})^{l} g + \nabla_{\tilde{V},\xi} \left( G_2(g,\nabla_{\tilde{V}} g)\right) \# \nabla_{\tilde{V}} ( (\nabla_{\tilde{V},\xi})^{l} g).
		\end{split}
	\label{eqn:extral1}
\end{equation}
Here $g^{\cdot\cdot}$ stands for one of $g^{i\bar{j}}$. It follows from the chain rule and the definition of $\mathcal H_l$ that \eqref{eqn:extral} holds for $l+1$ as well.

With \eqref{eqn:extral}, we can finish the proof of this lemma. Similar to \eqref{eqn:h1}, we have
	\begin{equation}\label{eqn:h2}
		\norm{(\nabla_{\tilde{V},\xi})^l h}_{C^{k+2,\alpha}_{\tilde{V}}(B_{c_\beta})}\leq C\rho_0^l.
	\end{equation}
	Moreover, by induction, if \eqref{eqn:goodeta} is proved for $l'<l$,
	\begin{equation}\label{eqn:h3}
		\norm{\mathcal H_l}_{C^{k,\alpha}_{\tilde{V}}(B_{c_\beta-\eta_{l-1}})}\leq C\rho_0^l.
	\end{equation}
	\eqref{eqn:goodeta} for $l$ follows from the Schauder estimate of \eqref{eqn:extral} by \eqref{eqn:h2}, \eqref{eqn:h3} and \eqref{eqn:xigc0}.
\end{proof}
\vskip 1cm 
With these preparations, we move on to the proof of Theorem \ref{thm:interior}.
\subsection{The case $k_3=0,1$}

While we get good control for the complex Hessian of $\tilde{\varphi}(\tilde{v})$ (see \eqref{eqn:gcalpha}), the $C^0$ norm of $\tilde{\varphi}$ is made worse by a multiple of $\rho_0^{-2}$, i.e.,
\begin{equation}
	\norm{\tilde{\varphi}(\tilde{v})}_{C^0(B_{c_\beta})}\leq C \rho_0^{-2}
	\label{eqn:tildeWC0}
\end{equation}
and the $C^0$ norm of the first order derivative of $\tilde{\varphi}$ becomes
\begin{equation}
	\norm{\nabla_{\tilde{V}} \tilde{\varphi}(\tilde{v})}_{C^0(B_{c_\beta})}\leq C \rho_0^{-1}.
	\label{eqn:tildeWC1}
\end{equation}

To prove Theorem \ref{thm:interior} for the case $k_3=0,1$, we consider the equation satisfied by $\nabla_{\tilde{V}} \tilde{\varphi}$,
\begin{equation}\label{eqn:dwpsi}
	\triangle_g (\nabla_{\tilde{V}} \tilde{\varphi}) =\lambda \rho_0^2 \nabla_{\tilde{V}} \tilde{\varphi} + \nabla_{\tilde{V}} h,
\end{equation}
which is obtained by taking $\tilde{V}$-derivative of the Monge-Amp\`ere equation satisfied by $\tilde{\varphi}$ (see \eqref{eqn:MAinV} and \eqref{eqn:setting}) 
\begin{equation*}
	\log \det (\tilde{\varphi}_{\tilde{V},i\bar{j}}) =\lambda \rho_0^2 \tilde{\varphi} + h.
\end{equation*}
By Lemma \ref{lem:goodmetric} and Corollary \ref{cor:h}, any $C^{k,\alpha}_{\tilde{V}}$ norm of the metric $g$ is bounded and 
\begin{equation}\label{temph}
	\norm{\nabla_{\tilde{V}} h}_{C^{k,\alpha}_{\tilde{V}}(B_{c_\beta/2})}\leq C
\end{equation}
so that the Schauder estimate applied to \eqref{eqn:dwpsi} gives
\begin{equation}\label{eqn:smallk3}
	\norm{\nabla_{\tilde{V}}\tilde{\varphi}}_{C^{k+2,\alpha}_{\tilde{V}}(B_{c_\beta/4})}\leq C \left( \norm{\nabla_{\tilde{V}}\tilde{\varphi}}_{C^0(B_{c_\beta/2})} + \norm{\nabla_{\tilde{V}} h}_{C^{k,\alpha}_{\tilde{V}}(B_{c_\beta/2})} \right) \leq C \rho_0^{-1},
\end{equation}
where we have used \eqref{eqn:tildeWC1} and \eqref{temph} in the last inequality above.
By \eqref{eqn:setting} and \eqref{eqn:smallk3},
\begin{equation}\label{eqn:k30}
	\norm{\varphi}_{C^{k,\alpha}_{\tilde{V}}(B_{c_\beta/4})}\leq C \quad \mbox{for} \quad k\geq 1,
\end{equation}
which is exactly what we need for the case $k_3=0$ after a further translation to the polar coordinates (see Lemma \ref{lem:tildeW}). 

For $k_3=1$, it suffices to notice that \eqref{eqn:smallk3} implies
\begin{equation*}
	\norm{\nabla_{V,\xi} \varphi}_{C^{k,\alpha}_{\tilde{V}}(B_{c_\beta/4})}\leq C \quad \mbox{for any} \quad k \geq 0. 
\end{equation*}

\subsection{The case $k_3>1$}
While the basic strategy of the proof in this case is similar, we need to use some extra information provided by the tangential regularity of $\varphi$:
\begin{equation}\label{eqn:k32}
	\norm{(\nabla_{\tilde{V},\xi})^{k_3} \tilde{\varphi}}_{C^0(B_{c_\beta})}\leq C \rho_0^{k_3-2},
\end{equation}
which follows from the boundedness of $\nabla^{k_3}_{V,\xi} \varphi$ and \eqref{eqn:setting}.
By \eqref{eqn:setting} again and Lemma \ref{lem:tildeW}, the proof of Theorem \ref{thm:interior} for the case $k_3>1$ is reduced to the claim that
\begin{equation}\label{eqn:weclaim}
	\norm{(\nabla_{\tilde{V},\xi})^{k_3} \tilde{\varphi}}_{C^{l,\alpha}_{\tilde{V}}(B_{c_\beta/4})}\leq C(l) \rho_0^{k_3-2} \qquad \text{for} \quad l\in \mathbb N.
\end{equation}


The rest of this section is devoted to the proof of \eqref{eqn:weclaim}, which is an induction on $k_3$.
We first notice that \eqref{eqn:weclaim} for the case $k_3=1$ is a special case of \eqref{eqn:smallk3}. For $k_3=2$, we take one more $\nabla_{\tilde{V},\xi}$ of the following equation (see \eqref{eqn:dwpsi})
\begin{equation*}
	\triangle_g (\nabla_{\tilde{V},\xi} \tilde{\varphi}) =\lambda \rho_0^2 \nabla_{\tilde{V},\xi} \tilde{\varphi}+ \nabla_{\tilde{V},\xi} h
\end{equation*}
to get
\begin{equation}\label{eqn:2k3}
	\triangle_{g} (\nabla_{\tilde{V},\xi}^2 \tilde{\varphi}) =\lambda \rho_0^2 \nabla_{\tilde{V},\xi}^2 \tilde{\varphi} + (\nabla_{\tilde{V},\xi} \tilde{\varphi})_{\tilde{V},i\bar{j}} \# \nabla_{\tilde{V},\xi} g^{i\bar{j}} + \nabla^2_{\tilde{V},\xi} h.
\end{equation}
The idea is to take this as a linear equation of $\nabla^2_{\tilde{V},\xi} \tilde{\varphi}$ defined on $B_{c_\beta}$ and to apply the Schauder estimate. For this purpose, we check that: (a) the coefficients of $\triangle_g$ are known to be good by Lemma \ref{lem:goodmetric}; (b) by \eqref{eqn:k32}, the $C^0(B_{c_\beta})$ norm of $\nabla^2_{\tilde{V},\xi} \tilde{\varphi}$ is bounded by a constant independent of $\rho_0$, or a constant multiple of $\rho_0^{k_3-2}$ since $k_3=2$; (c) the non-homogeneous term is (by switching the order of derivatives) 
\begin{equation*}
	(\nabla_{\tilde{V},\xi} \tilde{\varphi})_{\tilde{V},i\bar{j}} \# \nabla_{\tilde{V},\xi} g^{i\bar{j}} + \nabla^2_{\tilde{V},\xi} h= \nabla_{\tilde{V},\xi} g_{i\bar{j}} \# \nabla_{\tilde{V},\xi} g^{i\bar{j}}+ \nabla^2_{\tilde{V},\xi} h,
\end{equation*}
where the first term is estimated by Lemma \ref{lem:goodtangent}
\begin{equation}\label{eqn:non1}
	\norm{\nabla_{\tilde{V},\xi} g_{i\bar{j}} \# \nabla_{\tilde{V},\xi} g^{i\bar{j}}}_{C^{k,\alpha}_{\tilde{V}}(B_{c_\beta/2})} \leq C \rho_0^2 \leq C
\end{equation}
and the second term by Corollary \ref{cor:h} (applying to $\nabla^2_\xi h$)
\begin{equation}\label{eqn:non2}
	\norm{\nabla^2_{\tilde{V},\xi} h}_{C^{k,\alpha}_{\tilde{V}}(B_{c_\beta/2})} \leq C \rho_0^2 \leq C.
\end{equation}
By (a)-(c) above, applying the Schauder estimate to \eqref{eqn:2k3} concludes the proof of \eqref{eqn:weclaim} for $k_3=2$. 

For $k_3=3$, we take one more $\nabla_{\tilde{V},\xi}$ to \eqref{eqn:2k3} and find a similar equation with a more complicated non-homogeneous term. The key point is again that the $C^{k,\alpha}_{\tilde{V}}$ norm of this non-homogeneous term is now bounded by $ C\rho_0 = C \rho_0^{k_3-2}$ by the same reason as above so that we can obtain \eqref{eqn:weclaim} (for $k_3=3$) from \eqref{eqn:k32} by using the Schauder estimate. 

We repeat this argument to see that \eqref{eqn:weclaim} for any $k_3$ (hence Theorem \ref{thm:interior}) holds.

\section{Expansion}
In this section, we prove Theorem \ref{thm:main} and Corollary \ref{cor:main}. 

\subsection{Formal consideration}
For any $q\in \Real$, define
\begin{equation*}
	\mathcal T=\set{\rho^{2j+\frac{k}{\beta}}\cos l\theta,\rho^{2j+\frac{k}{\beta}}\sin l\theta|\, j,k,l\in \mathbb N\cup\set{0},\frac{k-l}{2}\in \mathbb N\cup \set{0} }.
\end{equation*}
If we write $Span(\mathcal T)$ for the set of finite linear combinations of functions in $\mathcal T$, it is easy to check that $Span(\mathcal T)$ is multiplicatively closed. In \cite{yin2016}, the author proved that the Ricci flow solution (on surface) has an expansion using terms in $\mathcal T$. Please note that $\beta$ in this paper is $\beta+1$ in that paper. The key observation there is that if the unknown functions are assumed to have an expansion using functions in $\mathcal T$, then by the Ricci flow equation, we know the $\tilde{\triangle}$ of the unknown function has the same expansion, which is a consequence of the nonlinear structure of the Ricci flow equation and the fact that $Span(\mathcal T)$ is multiplicatively closed. Moreover, the fact that $Span(\mathcal T)$ is also closed under the application of $\tilde{\triangle}^{-1}$ allows us to prove that the unknown function has the desired expansion using terms in $\mathcal T$ by an induction argument. 

Before we proceed, let's show how the complex Monge-Amp\`ere equation is more complicated, which forces us to consider terms involving $\log \rho$. Recall the equation of $\varphi$ and $\varphi_\chi$
\begin{align*}\tag{$B_\chi$}
	\det(\varphi_{\xi\bar{\xi}})(0,\xi) \tilde{\triangle} \varphi_\chi &=  F \# (P\varphi_{\chi_a} \cdot \bar{P} \varphi_{\chi_b})+ H  \\
	 &+ (\det(\varphi_{\xi\bar{\xi}})(0,\xi)- \det(\varphi_{\xi\bar{\xi}})(\rho,\theta,\xi)) \tilde{\triangle} \varphi_\chi,
\end{align*}
where $F$ and $H$ are smooth functions of $\varphi$ and $h$ and their tangent derivatives.

Now let's look at the right hand side of ($B_{\chi}$). Since $h$ and its tangent derivatives are smooth functions of $z_1$ and 
\begin{equation*}
	z_1= \beta^{1/\beta} \left( \rho^{\frac{1}{\beta}}\cos \theta + \sqrt{-1} \rho^{\frac{1}{\beta}}\sin \theta\right),
\end{equation*}
they should have an expansion in $\mathcal T$, which is later proved in Lemma \ref{lem:smooth}. If all of $\varphi_\chi$'s have expansion involving terms in $\mathcal T$, then formally, we expect that $F$ and $H$ also have expansion involving terms in $\mathcal T$. The problem is the product $P \varphi_{\chi_a}\cdot \bar{P} \varphi_{\chi_b}$.

\begin{rem}
	In the rest of this paper, the functions in $\mathcal T$ shall frequently appear in the argument. For simplicity, we discuss the $\cos$ term only and understand that a minor modification of the proof works for the $\sin$ term.
\end{rem}

By the definition of $\mathcal T$ and Lemma \ref{lem:pair}, $P\varphi_{\chi_a} \cdot \bar{P} \varphi_{\chi_b}$ is, up to some error, a sum of
\begin{equation}\label{eqn:badterm}
	\rho^{2j+\frac{k}{\beta}-2}\cos l\theta \quad \mbox{with} \quad j+k\geq 2, \frac{k-l}{2}\in \mathbb N\cup\set{0}.
\end{equation}
Some terms of \eqref{eqn:badterm} which may appear in the expansion of the right hand side of ($B_\chi$) cause trouble when we apply $\tilde{\triangle}^{-1}$ to the right hand side. To see this, we compute
\begin{eqnarray}\label{eqn:laplace}
	\tilde{\triangle} \rho^{\sigma+2} \cos l\theta &=&  (\partial_\rho^2 + \frac{1}{\rho}\partial_\rho - \frac{l^2}{\rho^2(\beta)^2}) \rho^{\sigma+2} \cos l\theta \\
	&=& ( (\sigma+2)^2 - \frac{l^2}{(\beta)^2}) \rho^\sigma \cos l\theta,
\end{eqnarray}
which implies that:
if $\rho^{\frac{k}{\beta}-2}\cos k\theta$ (a special case in \eqref{eqn:badterm}) appears in the expansion of the right hand side of ($B_\chi$), then we can not find a match in the expansion of the left hand side of ($B_\chi$).

This forces us to consider more terms than $\mathcal T$. The simple computation
\begin{equation*}
	\tilde{\triangle} (\rho^{\frac{k}{\beta}}(\log \rho) \cos k\theta) = \frac{2k}{\beta} \rho^{\frac{k}{\beta}-2}\cos k\theta
\end{equation*}
motivates us to include (in the expansion of $\varphi$ and $\varphi_\chi$)
\begin{equation*}
	\rho^{\frac{k}{\beta}}\log \rho \cos k\theta \quad \mbox{for} \quad k\geq 2.
\end{equation*}
Here $k\geq 2$ is a consequence of the $j+k\geq 2$ in \eqref{eqn:badterm}.

\vskip 2mm
Concluding the formal discussion above, we define 
\begin{defn}\label{defn:Tlog}
$\mathcal T_{\log}$ is defined to be the set of
\begin{equation*}
	\rho^{2j+\frac{k}{\beta}}(\log \rho)^m \cos l\theta,\quad
	\rho^{2j+\frac{k}{\beta}}(\log \rho)^m \sin l\theta
\end{equation*}
satisfying
\begin{enumerate}[(1)]
	\item $k,j,l,m=0,1,2,\cdots$;
	\item $\frac{k-l}{2}\in \mathbb N\cup\set{0}$;
	\item $m\leq \max \set{0,k-1}$. 
\end{enumerate}
\end{defn}
It is trivial to check that $Span(\mathcal T_{\log})$ is multiplicatively closed. We shall see in later proofs that (3) plays a subtle role in balancing the solvability of $\tilde{\triangle}$ and the nonlinear structure of the right hand side of ($B_\chi$).

\begin{lem}\label{lem:solveLaplace}
	Except the case $k=l=m+1$ and $j=0$ and the case $k=l=j=0$, for any $v=\rho^{2j+\frac{k}{\beta}}(\log \rho)^m \cos l\theta$ in $\mathcal T_{\log}$, we have $u\in Span(\mathcal T_{\log})$ such that
	\begin{equation*}
		\tilde{\triangle} u = \rho^{-2} v.
	\end{equation*}
\end{lem}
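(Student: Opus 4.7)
The plan is to use the explicit action of $\tilde{\triangle}$ on a single monomial $\rho^\sigma(\log\rho)^p\cos l\theta$ and then build $u$ by matching coefficients.

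First I would compute directly
\begin{equation*}
\tilde{\triangle}\bigl(\rho^\sigma(\log\rho)^p\cos l\theta\bigr) = \rho^{\sigma-2}\Bigl\{\bigl(\sigma^2-\tfrac{l^2}{\beta^2}\bigr)(\log\rho)^p + 2p\sigma(\log\rho)^{p-1} + p(p-1)(\log\rho)^{p-2}\Bigr\}\cos l\theta,
\end{equation*}
which is immediate from the definition of $\tilde{\triangle}$. Setting $\sigma=2j+k/\beta$, the ``characteristic coefficient'' $\sigma^2-l^2/\beta^2$ vanishes exactly when $2j\beta+k=l$. Combined with (T2) ($k\geq l$) and $j,\beta>0$, this forces the resonant case $j=0$, $k=l$. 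Otherwise we are non-resonant.

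In the non-resonant case ($\sigma\ne l/\beta$), I would try the ansatz
\begin{equation*}
u = \sum_{p=0}^{m} A_p\,\rho^{\sigma}(\log\rho)^p\cos l\theta,
\end{equation*}
and match powers of $\log\rho$ in $\tilde{\triangle} u=\rho^{\sigma-2}(\log\rho)^m\cos l\theta$ from the top down. The leading equation gives $A_m=(\sigma^2-l^2/\beta^2)^{-1}$, and each subsequent $A_p$ ($p=m-1,\dots,0$) is determined by a linear equation with the same nonzero leading coefficient. The $j,k,l$ indices of $u$ match those of $v$, and all log powers satisfy $p\le m\le\max\{0,k-1\}$, so $u\in\mathrm{Span}(\mathcal T_{\log})$.

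In the resonant case ($j=0$, $k=l\ge 1$) the coefficient of $(\log\rho)^p$ in $\tilde{\triangle}(\rho^{k/\beta}(\log\rho)^p\cos k\theta)$ drops out, so I would raise the log power by one and try
\begin{equation*}
u = \sum_{p=1}^{m+1} B_p\,\rho^{k/\beta}(\log\rho)^p\cos k\theta.
\end{equation*}
The leading match yields $B_{m+1}=\beta/\bigl(2(m+1)k\bigr)$, and the remaining $B_p$ are fixed by back-substitution (no $p=0$ term is needed since $\tilde{\triangle}(\rho^{k/\beta}\cos k\theta)=0$). For $u\in\mathrm{Span}(\mathcal T_{\log})$ we now need $m+1\le\max\{0,k-1\}=k-1$, i.e.\ $m\le k-2$. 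The boundary $m=k-1$ is exactly the excluded case $k=l=m+1$, $j=0$. Finally, the case $k=l=j=0$ forces $m=0$ by (T3), so $v=1$ and we would need $\tilde{\triangle}u=\rho^{-2}$ with $u$ a constant, which is impossible; this accounts for the second exclusion. The $\sin l\theta$ version is identical.

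The only real subtlety, and the one I would flag as the main obstacle, is the compatibility between the resonance $\sigma=l/\beta$, which demands raising the log power by one, and condition (T3), which caps the log power at $k-1$; the two excluded cases are precisely the points where these two forces collide. Everything else reduces to triangular linear algebra on the coefficients $A_p$ or $B_p$.
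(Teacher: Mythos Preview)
Your proposal is correct and follows essentially the same approach as the paper: both rest on the same explicit formula for $\tilde{\triangle}\bigl(\rho^\sigma(\log\rho)^p\cos l\theta\bigr)$, split into the non-resonant case $\sigma\ne l/\beta$ and the resonant case $j=0$, $k=l$, and then build $u$ as a linear combination of $\rho^\sigma(\log\rho)^p\cos l\theta$ with $p$ ranging over $\{0,\dots,m\}$ (non-resonant) or $\{1,\dots,m+1\}$ (resonant). The only stylistic difference is that the paper phrases the construction as an induction on $m$ (take $u_1=v$ or $u_1=\rho^{k/\beta}(\log\rho)^{m+2}\cos k\theta$ and correct the lower-order log terms using the inductive hypothesis), whereas you write down the full ansatz at once and observe that the resulting linear system for the coefficients is triangular with nonzero diagonal; these are the same argument in different dress. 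One small slip: you write ``$j,\beta>0$'' when you mean $j\ge 0$, $\beta>0$, but the conclusion $2j\beta+k=l\Rightarrow j=0,\ k=l$ is unaffected.
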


\begin{proof}
	The proof relies on the following computation
	\begin{eqnarray}\label{eqn:log}
		\tilde{\triangle} \rho^{\sigma}(\log \rho)^m \cos l \theta &=&  \left( \sigma^2-\frac{l^2}{\beta^2} \right) \rho^{\sigma-2} (\log \rho)^m \cos l\theta \\ \nonumber
		&& + 2\sigma m \rho^{\sigma-2}(\log \rho)^{m-1}\cos l\theta \\ \nonumber
		&& + m(m-1) \rho^{\sigma-2} (\log \rho)^{m-2} \cos l\theta.
	\end{eqnarray}

	(1) If either $j\ne 0$ or $k\ne l$, we prove the lemma by induction on $m$ as follows. Notice that in this case, we always have $2j+\frac{k}{\beta}> \frac{l}{\beta}$ because $k\geq l$ as required in the definition of $\mathcal T_{\log}$.

	When $m=0$, $v=\rho^\sigma\cos l\theta$ with $\sigma=2j+\frac{k}{\beta}\ne \frac{l}{\beta}$. \eqref{eqn:log} implies that $\tilde{\triangle} v$ is a multiple of $\rho^{-2} v$, which proves the lemma in this case. 

	Assume that the case $m\leq m_0$ for some $m_0< \max\set{0,k-1}$ is proved and that $m_0+1\leq \max\set{0, k-1}$ such that $v=\rho^{2j+\frac{k}{\beta}}(\log \rho)^{m_0+1}\cos l\theta$ is in $\mathcal T_{\log}$. By taking $u_1= v$, \eqref{eqn:log} again implies that $\tilde{\triangle} u_1$ is a multiple of $\rho^{-2} v$ up to a linear combination of
	\begin{equation*}
		\rho^{2j+\frac{k}{\beta}-2}(\log \rho)^{m_0-1}\cos l\theta\quad (\text{if } m_0\geq 1) \quad \mbox{and} \quad \rho^{2j+\frac{k}{\beta}-2} (\log \rho)^{m_0} \cos l\theta.
	\end{equation*}
	By the induction hypothesis, there exists $u_2,u_3\in Span(\mathcal T_{\log})$ such that the above two terms are $\tilde{\triangle} u_2$ and $\tilde{\triangle} u_3$ respectively. The desired $u$ is then the linear combination of $u_1,u_2$ and $u_3$.

	(2) For the rest of the proof, we assume $j=0$ and $k=l\ne 0$. In this case, $\sigma=\frac{l}{\beta}$ and the first term in the right hand side of \eqref{eqn:log} vanishes. Notice that there is nothing to prove for $k=l=1$, so we assume that $k\geq 2$ and prove by induction that the lemma holds for $m=0,1,\cdots,k-2$.
	
	When $m=0$, i.e. $v=\rho^{\frac{k}{\beta}}\cos k\theta$, we can take $u=\frac{\beta}{2k}\rho^{\frac{k}{\beta}}(\log \rho)\cos k\theta$ in $Span(\mathcal T_{\log})$, because as a special case of \eqref{eqn:log},
	\begin{equation*}
		\tilde{\triangle} \rho^{\frac{k}{\beta}}(\log \rho) \cos k\theta = \frac{2k}{\beta} \rho^{\frac{k}{\beta}-2} \cos k\theta.
	\end{equation*}

	Assume that lemma is proved for $m\leq m_0$ and that $m_0+1\leq k-2$. Let $v=\rho^{\frac{k}{\beta}}(\log \rho)^{m_0+1} \cos k\theta$ and take $u_1= \rho^{\frac{k}{\beta}}(\log \rho)^{m_0+2} \cos k\theta$ in $\mathcal T_{\log}$. By \eqref{eqn:log}, $\tilde{\triangle} u_1$ is a multiple of $\rho^{-2} v$ up to a multiple of
	\begin{equation*}
		\rho^{\frac{k}{\beta}-2} (\log \rho)^{m_0} \cos k\theta,
	\end{equation*}
	which is $\tilde{\triangle} u_2$ for some $u_2\in Span(\mathcal T_{\log})$ by the induction hypothesis. Again, the desired $u$ is a linear combination of $u_1$ and $u_2$.
\end{proof}

Motivated by this lemma, we define
\begin{defn}
	\label{defn:Trhs}
\begin{equation}\label{eqn:Trhs}
	\mathcal T_{rhs}=\left( \rho^{-2}\mathcal T_{\log}  \right) \setminus \set{\rho^{-2}, \rho^{\frac{k}{\beta}-2}(\log \rho)^{k-1} \cos k\theta |\,k\in \mathbb N},
\end{equation}
where $\rho^{-2} \mathcal T_{\log}$ is the set of $\rho^{-2} v$ for each $v\in \mathcal T_{\log}$. 
\end{defn}
A simple observation is that $\mathcal T_{\log}\subset \mathcal T_{rhs}$. With \eqref{eqn:Trhs}, Lemma \ref{lem:solveLaplace} can be formulated as
\begin{lem}
	\label{lem:rhs}
	For each $v\in Span(\mathcal T_{rhs})$, there is $u$ in $Span(\mathcal T_{\log})$ such that
	\begin{equation*}
		\tilde{\triangle} u =v.
	\end{equation*}
\end{lem}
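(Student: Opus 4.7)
The plan is to observe that Lemma \ref{lem:rhs} is essentially a repackaging of Lemma \ref{lem:solveLaplace}, and the definition of $\mathcal{T}_{rhs}$ in Definition \ref{defn:Trhs} was set up precisely to remove the two obstruction cases of Lemma \ref{lem:solveLaplace}. So there should be essentially nothing to prove beyond invoking linearity and bookkeeping.

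First, I would reduce to elements of $\mathcal T_{rhs}$ itself: since the operator $\tilde{\triangle}$ is linear and $Span(\mathcal T_{\log})$ is closed under finite linear combinations, it suffices to produce, for each single $v\in \mathcal T_{rhs}$, a $u\in Span(\mathcal T_{\log})$ with $\tilde{\triangle} u=v$; the general case then follows by summing the individual solutions.

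Next, given $v\in \mathcal T_{rhs}$, by Definition \ref{defn:Trhs} we can write $v=\rho^{-2}w$ for some $w\in \mathcal T_{\log}$, and moreover $w$ is neither the constant $1$ (the $k=l=j=0$ case) nor of the form $\rho^{k/\beta}(\log\rho)^{k-1}\cos k\theta$ (or its sine analogue), i.e. $w$ avoids exactly the two excluded configurations appearing in the statement of Lemma \ref{lem:solveLaplace}. Thus Lemma \ref{lem:solveLaplace} applies and yields $u\in Span(\mathcal T_{\log})$ with $\tilde{\triangle} u=\rho^{-2}w=v$, which is what we want.

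There is no real obstacle here; the only thing to double-check is that the bookkeeping matches: namely, that the exclusion set in Definition \ref{defn:Trhs}, namely $\{\rho^{-2},\,\rho^{k/\beta-2}(\log\rho)^{k-1}\cos k\theta:k\in\mathbb{N}\}$ (and the sine variant, per the convention in the remark preceding Lemma \ref{lem:solveLaplace}), is exactly $\rho^{-2}$ times the excluded inputs of Lemma \ref{lem:solveLaplace}. Once that identification is made, the proof is just one line applying Lemma \ref{lem:solveLaplace} term by term and summing.
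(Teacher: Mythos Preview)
Your proposal is correct and mirrors the paper's own treatment: the paper introduces Lemma~\ref{lem:rhs} as nothing more than a reformulation of Lemma~\ref{lem:solveLaplace} in light of Definition~\ref{defn:Trhs}, and your argument (reduce by linearity to single elements of $\mathcal T_{rhs}$, then observe that the exclusion set in Definition~\ref{defn:Trhs} matches precisely the two obstruction cases of Lemma~\ref{lem:solveLaplace}) is exactly that reformulation spelled out.
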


Besides its connection with Lemma \ref{lem:solveLaplace}, the definition of $\mathcal T_{rhs}$ in \eqref{eqn:Trhs} is important also in the analysis of the structure of the right hand side of ($B_\chi$). Briefly speaking, we will show in Section \ref{subsec:proof} that if $\varphi_\chi$ has expansion using functions in $\mathcal T_{\log}$, then the right hand side of ($B_{\chi}$) has an expansion in $\mathcal T_{rhs}$. This explains the subscript in the notation. For that purpose, we shall need the following lemma. 
\begin{lem}
	\label{lem:logrhs} (1) If $\eta_1\in Span(\mathcal T_{\log})$ and $\eta_2\in Span(\mathcal T_{rhs})$, then $\eta_1\cdot \eta_2 \in Span(\mathcal T_{rhs})$.

	(2) If $\eta_1$ and $\eta_2$ are in $Span(\mathcal T_{\log})$, but neither of them is constant function, then $\rho^{-2} \eta_1 \cdot \eta_2 \in Span(\mathcal T_{rhs})$. 
\end{lem}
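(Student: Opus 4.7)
My plan is to reduce both parts, by bilinearity, to verifying the claim on products of individual basis elements, then to check (T1)--(T3) for the resulting product and finally to rule out the two forbidden shapes excluded from $\mathcal T_{rhs}$. A typical basis element of $\mathcal T_{\log}$ is $v=\rho^{2j+k/\beta}(\log\rho)^m\chi(l\theta)$ with $\chi\in\{\cos,\sin\}$, and a typical basis element of $\mathcal T_{rhs}$ is $\rho^{-2}u$ with $u\in\mathcal T_{\log}$ lying outside the set $\{1,\;\rho^{k/\beta}(\log\rho)^{k-1}\cos k\theta\}$. The key mechanism is the sum-to-product identity
\begin{equation*}
\cos l_1\theta\cos l_2\theta=\tfrac12\bigl[\cos(l_1+l_2)\theta+\cos|l_1-l_2|\theta\bigr]
\end{equation*}
(and its analogues involving $\sin$), so that a product of two basis elements splits into a sum of at most two functions with the same exponent $\sigma'=2(j_1+j_2)+(k_1+k_2)/\beta$, log-power $m_1+m_2$, and frequency $l'\in\{l_1+l_2,|l_1-l_2|\}$.

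Next I would verify (T1)--(T3) for each such summand. (T1) is immediate. For (T2), if $l'=l_1+l_2$ then $k'-l'=(k_1-l_1)+(k_2-l_2)$ is a sum of two non-negative even integers, and if $l'=|l_1-l_2|$ (say $l_1\geq l_2$) then $k'-l'=(k_1-l_1)+(k_2+l_2)$, still non-negative and even because $k_2+l_2\equiv k_2-l_2\pmod 2$. For (T3), if both $k_i\geq 1$ then $m_1+m_2\leq(k_1-1)+(k_2-1)\leq k'-1$; if some $k_i=0$ then (T2) and (T3) force $l_i=m_i=0$, so (T3) for the product is inherited from the other factor.

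It remains to verify that the two forbidden shapes do not appear. Writing the product as $\rho^{-2}(v_1 v_2)$ with $v_1,v_2\in\mathcal T_{\log}$ (after absorbing the external $\rho^{-2}$ of $\mathcal T_{rhs}$ into one factor), we need the expansion of $v_1 v_2$ in the $\mathcal T_{\log}$ basis to carry zero coefficient on $1$ and on each $\rho^{k/\beta}(\log\rho)^{k-1}\cos k\theta$. A $1$-component arises only when all indices vanish simultaneously, forcing $v_1=v_2=1$; in part (1) this is prevented because the inner factor $u$ of any element of $\mathcal T_{rhs}$ is explicitly forbidden to equal $1$, and in part (2) by the non-constancy hypothesis on $\eta_1$ and $\eta_2$. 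A $\rho^{k/\beta}(\log\rho)^{k-1}\cos k\theta$ component requires $j_1+j_2=0$, (T2) then forcing $l_i=k_i$, together with $m_1+m_2=k-1$; if both $k_i\geq 1$ this contradicts (T3), and if some $k_i=0$ it collapses the other factor to the excluded form, again ruled out in part (1) by the definition of $\mathcal T_{rhs}$ and in part (2) by the non-constancy hypothesis.

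I do not expect a conceptual obstacle. The main work is a careful enumeration stemming from the two frequencies $l_1\pm l_2$ produced by the trigonometric identities and the different sine/cosine combinations, together with the parity and size inequalities of (T2)--(T3); the only subtle point is to identify precisely which index configurations generate the forbidden shapes and to check that each is ruled out either by the definition of $\mathcal T_{rhs}$ or by the hypothesis that neither $\eta_i$ is constant.
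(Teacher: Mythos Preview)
Your proposal is correct and follows essentially the same approach as the paper's proof: reduce by bilinearity to single basis elements, expand the trigonometric product into the two frequencies $l_1+l_2$ and $|l_1-l_2|$, verify (T1)--(T3) for each summand, and then rule out the two forbidden shapes by the same case analysis. One small point of order: the conclusion $j_1+j_2=0$ for the forbidden shape $\rho^{k/\beta}(\log\rho)^{k-1}\cos k\theta$ should be deduced from the frequency match $l'=k$ together with (T2) (so $k\le k_1+k_2$ forces $2(j_1+j_2)\beta\le 0$), rather than asserted directly from the exponent identity, since for rational $\beta$ the decomposition $2j+k/\beta$ is not a priori unique; the paper's argument proceeds in exactly this order.
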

\begin{proof}
	(1)

Assume without loss of generality that
\begin{equation*}
	\eta_1=\rho^{2j_1+\frac{k_1}{\beta}}(\log \rho)^{m_1} \cos l_1 \theta \quad \text{and} \quad \eta_2=\rho^{-2+ 2j_2+\frac{k_2}{\beta}}(\log \rho)^{m_2} \cos l_2 \theta,
\end{equation*}
where $k_i,j_i,m_i,l_i$ are subject to the restrictions of Definition \ref{defn:Tlog} and Definition \ref{defn:Trhs} respectively. Direct computation gives 
\begin{equation}\label{eqn:multi}
	\rho^2 \eta_1\eta_2= \frac{1}{2}(Y_1-Y_2),
\end{equation}
where
\begin{equation}
	\begin{split}
	Y_1&=\rho^{2(j_1+j_2)+ \frac{k_1+k_2}{\beta}}(\log \rho)^{m_1+m_2}   \cos(l_1+l_2)\theta \\	
	Y_2&=\rho^{2(j_1+j_2)+ \frac{k_1+k_2}{\beta}}(\log \rho)^{m_1+m_2}   \cos(l_1-l_2)\theta.
	\end{split}
	\label{eqn:Y}
\end{equation}

By the definition of $\mathcal T_{rhs}$, it suffices to show that (a) $Y_1$ and $Y_2$ are in $\mathcal T_{\log}$, but (b) neither $Y_1$ or $Y_2$ is in 
\begin{equation}\label{eqn:badset}
	\set{1, \rho^{\frac{k}{\beta}}(\log \rho)^{k-1} \cos k\theta|\, k\in \mathbb N}.	
\end{equation}
The proof of (a) is trivial and is in fact the same as the proof of the claim that $Span(\mathcal T_{\log})$ is multiplicatively closed.

For (b), we first notice that neither $Y_1$ or $Y_2$ can be constant. Otherwise, we must have
\begin{equation*}
	2(j_1+j_2)+\frac{k_1+k_2}{\beta}=m_1+m_2=0,
\end{equation*}
and hence $j_2=k_2=m_2=0$, which is a contradiction to $\eta_2\in \mathcal T_{rhs}$.

If there is $k\in \mathbb N$ such that $Y_1 = \rho^{\frac{k}{\beta}}(\log \rho)^{k-1} \cos k\theta$, then
\begin{align}
	\label{a1}	2(j_1+j_2) \beta + (k_1+k_2) &= k \\
	\label{a2} 	m_1+m_2 &= k-1 \\
	\label{a3}	l_1+l_2 &= k.
\end{align}
By $k_1\geq l_1$, $k_2\geq l_2$ and $j_1,j_2\geq 0$, we know
\begin{equation*}
	j_1=j_2=0,\quad k_1=l_1, \quad k_2=l_2.
\end{equation*}

If $k_1=0$, then $k_1=m_1=l_1=j_1=0$, i.e. $\eta_1=1$, in which case the lemma is trivial and there is nothing to prove. If $k_2=0$, then $k_2=m_2=l_2=j_2=0$ and $\eta_2=\rho^{-2}$, which is a contradiction to the assumption that $\eta_2\in \mathcal T_{rhs}$.

If both $k_1$ and $k_2$ are positive, then $m_1\leq k_1-1$ and $m_2\leq k_2-1$, which contradicts \eqref{a2}. In summary, we have proved that $Y_1$ is not in \eqref{eqn:badset}.

If there is $k\in \mathbb N$ such that $Y_2 = \rho^{\frac{k}{\beta}}(\log \rho)^{k-1} \cos k\theta$, then
\begin{align}
	\label{b1}	2(j_1+j_2) \beta + (k_1+k_2) &= k \\
	\label{b2} 	m_1+m_2 &= k-1 \\
	\label{b3}	\abs{l_1-l_2} &= k.
\end{align}
A similar discussion yields a contradiction. Hence, the proof of (1) is done.

\vskip 2mm
(2) 
Assume 
\begin{equation*}
	\eta_1=\rho^{2j_1+\frac{k_1}{\beta}}(\log \rho)^{m_1} \cos l_1 \theta \quad \text{and} \quad \eta_2=\rho^{2j_2+\frac{k_2}{\beta}}(\log \rho)^{m_2} \cos l_2 \theta
\end{equation*}
and let $Y_1$ and $Y_2$ be defined as in \eqref{eqn:Y}. Since $\eta_1\cdot \eta_2=\frac{1}{2}(Y_1-Y_2)$, it suffices to prove that (as before) both $Y_1$ and $Y_2$ are in $\mathcal T_{\log}$, but neither is in \eqref{eqn:badset}. Again, the first assertion is trivial. 

By our assumption that neither of $\eta_1$ and $\eta_2$ is constant function, we have
\begin{equation}\label{eqn:nonzero}
	j_1 \beta + k_1>0 \quad \text{and} \quad j_2 \beta +k_2>0.
\end{equation}
A consequence of this is that $Y_1$ and $Y_2$ are not constant function.

It remains to exclude the possibility that $Y_1$ (or $Y_2$) is $\rho^{\frac{k}{\beta}}(\log \rho)^{k-1} \cos k\theta$ for some $k\in \mathbb N$. In that case, \eqref{a1} and \eqref{a2} hold as before and imply that
\begin{equation}\label{eqn:equal}
	2(j_1+j_2)\beta +(k_1+k_2) = m_1+m_2+1.
\end{equation}
This is a contradiction to $m_1\leq \max\set{0,k_1-1}$ and $m_2\leq \max \set{0,k_2-1}$, unless (at least) one of $k_1$ and $k_2$ is zero. If $k_1=m_1=0$, then \eqref{eqn:nonzero} implies that $j_1>0$, and hence \eqref{eqn:equal} implies
\begin{equation*}
	k_2< m_2+1,
\end{equation*}
which is a contradiction to the definition of $\mathcal T_{\log}$.
\end{proof}

\subsection{Finite expansion}\label{subsec:finite}
Throughout this section, we fix $\xi$ and take $\varphi$ and $\varphi_{\chi}$ as functions of $\rho$ and $\theta$ alone. We write $B_r$ for the set of $(\rho,\theta)$ with $\rho\in (0,r)$ and $\theta\in S^1$.

\begin{defn}
	\label{def:O} A function $u$ defined in $B_{1/2}$ is said to be in $\tilde{O}(q)$ for some $q\in \Real$ if and only if there are constants $C(k_1,k_2)$ for all $k_1,k_2=0,1,2,\cdots$ such that
		\begin{equation*}
			\abs{(\rho \partial_\rho)^{k_1} \partial_\theta^{k_2} u}\leq C(k_1,k_2)\rho^q \quad \mbox{on} \quad B_{1/2}.
		\end{equation*}
\end{defn}
\begin{rem}
	Sometimes, we abuse the notation by using $\tilde{O}(q)$ to denote a function in it.
\end{rem}
Theorem \ref{thm:interior} implies that $\varphi$ and $\varphi_\chi$ are in $\tilde{O}(0)$. In fact, we have 
\begin{lem}\label{lem:start}
	Suppose that $u$ is in both $\tilde{O}(0)$ and Donaldson's space $C^{2,\alpha}_\beta$. Then there is some $q>1$ such that
	\begin{equation*}
		u=c+ \tilde{O}(q)
	\end{equation*}
	for some constant $c$.
\end{lem}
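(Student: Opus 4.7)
The strategy has two parts: (i) use the $C^{2,\alpha}_\beta$ assumption to produce the constant $c$ together with a coarse decay rate for $u-c$, and then (ii) use interpolation against the uniform bounds from $\tilde O(0)$ to promote this to decay of every weighted derivative with the \emph{same} exponent $q>1$.

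For step (i) I would invoke the polar characterization (P1)--(P3) of Donaldson's space: $u\in C^{2,\alpha}_\beta$ implies $\partial_\rho u$ and $\rho^{-1}\partial_\theta u$ lie in $C^\alpha_{\beta,0}$. Because the $g_{cone}$-distance from $(\rho,\theta)$ to the vertex $\{\rho=0\}$ is simply $\rho$, membership in $C^\alpha_{\beta,0}$ plus vanishing on the vertex directly yields
\begin{equation*}
    |\partial_\rho u(\rho,\theta)|\leq C\rho^\alpha,\qquad |\partial_\theta u(\rho,\theta)|\leq C\rho^{1+\alpha}.
\end{equation*}
Integrating the first along a radial segment shows $c(\theta):=\lim_{\rho\to 0^+}u(\rho,\theta)$ exists with $|u(\rho,\theta)-c(\theta)|\leq C\rho^{1+\alpha}$, while the second estimate forces $c(\theta)$ to be independent of $\theta$. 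Call this common limit $c$.

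For step (ii) I would rescale: for each small $\rho_0$ set $v(s,\theta):=u(\rho_0 s,\theta)-c$ on the fixed annular region $(s,\theta)\in[1/2,2]\times S^1$. Then $\|v\|_{L^\infty}\leq C\rho_0^{1+\alpha}$, and the scale-invariance $(s\partial_s)^a\partial_\theta^b v=\bigl((\rho\partial_\rho)^a\partial_\theta^b u\bigr)(\rho_0 s,\theta)$ combined with $u\in\tilde O(0)$ gives $\|v\|_{C^N([1/2,2]\times S^1)}\leq M_N$ uniformly in $\rho_0$ for every $N$. A standard Landau--Kolmogorov interpolation then produces
\begin{equation*}
    \|v\|_{C^k}\leq C_{k,N}\|v\|_{L^\infty}^{1-k/N}\|v\|_{C^N}^{k/N}.
\end{equation*}
Fix any $q\in(1,1+\alpha)$, say $q=1+\alpha/2$. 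For any prescribed order $k=k_1+k_2$, choosing $N$ so large that $(1+\alpha)(1-k/N)\geq q$ yields $\|v\|_{C^k}\leq C(k_1,k_2)\rho_0^q$; evaluating at $s=1$ and unwinding the rescaling gives the desired bound
\begin{equation*}
    |(\rho\partial_\rho)^{k_1}\partial_\theta^{k_2}(u-c)(\rho_0,\theta)|\leq C(k_1,k_2)\rho_0^q,
\end{equation*}
which is exactly $u-c\in\tilde O(q)$.

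The only mildly delicate point is the distance computation in step (i), which ensures $C^\alpha_{\beta,0}$-membership gives the full exponent $\alpha$; everything else is routine once one exploits the very strong content of $\tilde O(0)$, namely that \emph{all} weighted derivatives are bounded uniformly, so that $N$ in the interpolation can be taken as large as needed while the exponent $q\in(1,1+\alpha)$ remains fixed.
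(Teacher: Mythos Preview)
Your proposal is correct and follows essentially the same route as the paper: first use the $C^{2,\alpha}_\beta$ hypothesis (via the polar characterization and radial integration) to obtain the constant $c$ and the bound $|u-c|\leq C\rho^{1+\alpha}$, then rescale to a fixed annulus and interpolate between this $C^0$ decay and the uniform higher-derivative bounds coming from $\tilde O(0)$. The only cosmetic difference is that the paper works on a small $\theta$-interval rather than the full $S^1$, and phrases the interpolation step slightly less explicitly than your Landau--Kolmogorov formulation.
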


\begin{proof}
	Being in $C^{2,\alpha}_\beta$ implies that
	\begin{equation*}
		\abs{\pfrac{u}{\rho}}+ \abs{\frac{1}{\rho}\pfrac{u}{\theta}} \leq C \rho^{\alpha}
	\end{equation*}
	and that $u(0,\theta)$ is a constant independent of $\theta$.
	By the Newton-Lebnitz formula
	\begin{equation*}
		u(\rho,\theta)= u(0,\theta) + \int_0^\rho \pfrac{u}{\rho}(t,\theta) dt,
	\end{equation*}
	we have
	\begin{equation}\label{eqn:uonealpha}
		\abs{u(\rho,\theta)-u(0,\theta)}\leq C \rho^{1+\alpha}.
	\end{equation}
	For each fixed $\rho_0$ and $\theta_0$, set $v(\rho,\theta)= u(\rho \rho_0,\theta)- u(0,\theta)$. If $v$ is regarded as a function of $(\rho,\theta)$ defined on $Q:=\set{(\rho,\theta)|\, \frac{1}{2}<\rho<2, \theta_0-0.1<\theta<\theta_0+0.1}$, we have
	\begin{equation*}
		\norm{v}_{C^k(Q)}\leq C(k)\quad \mbox{for any} 	\quad k
	\end{equation*}
	because $u$ is in $\tilde{O}(0)$. On the other hand, \eqref{eqn:uonealpha} gives
	\begin{equation*}
		\norm{v}_{C^0(Q)}\leq C \rho_0^{1+\alpha}.
	\end{equation*}
	The usual interpolation yields that
	\begin{equation}\label{eqn:goodv}
		\norm{v}_{C^k(Q)}\leq C'(k) \rho_0^{q}
	\end{equation}
	for any $1<q<1+\alpha$ and another sequence of constants $C'(k)$ (depending on $q$). The lemma follows if \eqref{eqn:goodv} is translated into inequalities of $u$.
\end{proof}

\begin{defn}
	A function $u$ is said to have an expansion up to order $q$ if and only if there is $\eta \in Span(\mathcal T_{\log})$ such that
	\begin{equation*}
		u=\eta + \tilde{O}(q).
	\end{equation*}
	Similarly, a function $u$ is said to have an (rhs)-expansion up to order $q$ if and only if there is $\eta \in Span(\mathcal T_{rhs})$ such that the above holds.
\end{defn}

Obviously, if $2j+\frac{k}{\beta}>q$ or $2j+\frac{k}{\beta}=q$ with $m=0$, then
\begin{equation*}
	\rho^{2j+\frac{k}{\beta}}(\log \rho)^m \cos l\theta \in \tilde{O}(q).
\end{equation*} 
Therefore, $\eta$ in the above definition of expansion is only unique up to terms like these. Similar observation applies to the (rhs)-expansion.

Lemma \ref{lem:start} and the fact that $\varphi$ and $\varphi_\chi$ are in $C^{2,\alpha}_\beta$ imply that $\varphi$ and $\varphi_\chi$ have expansion up to order $q$ for some $q>1$. The proof of the main theorem is a bootstrapping argument starting from this.

For the use of the next section, we need the following three lemmas, which explains the reason why we want $Span(\mathcal T_{log})$ to be multiplicatively closed.

\begin{lem}
	\label{lem:product}
	Suppose that $u_1$ and $u_2$ have expansions up to order $q$. Then so does $u_1\cdot u_2$.
\end{lem}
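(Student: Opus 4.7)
The plan is to write $u_i = \eta_i + r_i$ with $\eta_i \in Span(\mathcal{T}_{\log})$ and $r_i \in \tilde{O}(q)$, then expand
\begin{equation*}
u_1 u_2 = \eta_1 \eta_2 + \eta_1 r_2 + \eta_2 r_1 + r_1 r_2.
\end{equation*}
The task reduces to identifying the first term as an element of $Span(\mathcal{T}_{\log})$ and absorbing the remaining three into $\tilde{O}(q)$.

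First, I would verify multiplicative closure of $Span(\mathcal{T}_{\log})$, which is the heart of the lemma. Multiplying two generators $\rho^{2j_i + k_i/\beta}(\log\rho)^{m_i}\cos l_i\theta$, the product-to-sum identity converts $\cos l_1\theta \cos l_2\theta$ into $\cos(l_1+l_2)\theta$ and $\cos(l_1-l_2)\theta$, and the exponents of $\rho$ and $\log\rho$ simply add. One then checks that the resulting indices $j_1+j_2$, $k_1+k_2$, $m_1+m_2$, and $l_1 \pm l_2$ satisfy the defining conditions (T1), (T2), (T3) of Definition \ref{defn:Tlog}. Conditions (T1) and (T2) are immediate, while (T3) is handled by cases: if both $k_i \ge 1$, then $m_1 + m_2 \le (k_1-1)+(k_2-1) \le (k_1+k_2)-1$; if one of the $k_i$ vanishes, the corresponding $m_i$ vanishes by (T3), and the bound reduces to that of the other factor. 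This yields $\eta_1 \eta_2 \in Span(\mathcal{T}_{\log})$.

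Second, I would show that every $\eta \in Span(\mathcal{T}_{\log})$, together with all of its iterated derivatives $(\rho\partial_\rho)^{k_1}\partial_\theta^{k_2}\eta$, is uniformly bounded on $B_{1/2}$. For each monomial $\rho^{2j+k/\beta}(\log\rho)^m \cos l\theta$: either $(j,k)=(0,0)$, in which case (T3) forces $m=0$ and the monomial is a bounded trigonometric function; or $2j+k/\beta > 0$, in which case the elementary fact $\lim_{\rho \to 0^+}\rho^a|\log\rho|^m = 0$ (for any $a>0$) gives boundedness. The operators $\rho\partial_\rho$ and $\partial_\theta$ preserve the form $\rho^{2j+k/\beta}(\log\rho)^{m'}(\text{trig})$ (possibly lowering $m$), so the same bound applies to all derivatives. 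Leibniz then gives $|(\rho\partial_\rho)^{k_1}\partial_\theta^{k_2}(\eta_i r_j)| \le C(k_1,k_2)\rho^q$, so $\eta_i r_j \in \tilde{O}(q)$. Finally, $|r_1 r_2| \le C\rho^{2q}$ together with the Leibniz estimate for its derivatives yields $r_1 r_2 \in \tilde{O}(2q) \subset \tilde{O}(q)$, since in the bootstrap application of this lemma we always have $q \ge 0$ (indeed $q>1$ at the initial step from Lemma \ref{lem:start}).

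There is no genuine obstacle: the lemma is bookkeeping built on the multiplicative closure of $Span(\mathcal{T}_{\log})$ and the boundedness of its elements and derivatives. The only spot that requires care is condition (T3), where the two extreme behaviors ($k=0$ forcing $m=0$ versus $k \ge 1$ allowing $m$ up to $k-1$) have to be combined correctly — this is exactly the subtle counting that makes $Span(\mathcal{T}_{\log})$ closed under products and will later be essential when the same closure is used to analyze the nonlinear right-hand side of $(B_\chi)$.
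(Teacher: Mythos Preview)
Your proof is correct and follows exactly the same approach as the paper: write $u_i=\eta_i+r_i$, use the multiplicative closure of $Span(\mathcal T_{\log})$ for $\eta_1\eta_2$, the fact that every element of $Span(\mathcal T_{\log})$ lies in $\tilde O(0)$ for the cross terms, and $\tilde O(q)\cdot\tilde O(q)\subset\tilde O(2q)\subset\tilde O(q)$ for the last term. The paper states these three facts without detail, while you verify (T3) and the boundedness explicitly, but the argument is the same.
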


\begin{lem}
	\label{lem:OO} If $u_i$ is in $\tilde{O}(q_i)$ for $i=1,2$, then $u_1\cdot u_2$ is in $\tilde{O}(q_1+q_2)$.  
\end{lem}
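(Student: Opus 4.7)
The proof is a straightforward application of the generalized Leibniz rule, exploiting the fact that both $\rho\partial_\rho$ and $\partial_\theta$ are derivations on the algebra of smooth functions. The plan is: first verify these operators satisfy the Leibniz rule, then expand the derivatives of the product using it, and finally bound each term using the hypothesis on $u_1$ and $u_2$.

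First I would check that $\rho\partial_\rho$ is a derivation: for any functions $f,g$,
\begin{equation*}
  \rho\partial_\rho(fg) \;=\; \rho\bigl(\partial_\rho f\cdot g + f\cdot \partial_\rho g\bigr) \;=\; (\rho\partial_\rho f)\,g + f\,(\rho\partial_\rho g),
\end{equation*}
and of course $\partial_\theta$ is already a derivation. Moreover $\rho\partial_\rho$ and $\partial_\theta$ commute, since they act on different coordinates. Hence iterating the Leibniz rule yields
\begin{equation*}
  (\rho\partial_\rho)^{k_1}\partial_\theta^{k_2}(u_1 u_2)
   = \sum_{i=0}^{k_1}\sum_{j=0}^{k_2} \binom{k_1}{i}\binom{k_2}{j}\,
   \bigl[(\rho\partial_\rho)^{i}\partial_\theta^{j} u_1\bigr]\cdot
   \bigl[(\rho\partial_\rho)^{k_1-i}\partial_\theta^{k_2-j} u_2\bigr].
\end{equation*}

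By the hypothesis $u_1\in\tilde{O}(q_1)$ and $u_2\in\tilde{O}(q_2)$, each factor satisfies
\begin{equation*}
  \bigl|(\rho\partial_\rho)^{i}\partial_\theta^{j} u_1\bigr|\leq C_1(i,j)\,\rho^{q_1},\qquad
  \bigl|(\rho\partial_\rho)^{k_1-i}\partial_\theta^{k_2-j} u_2\bigr|\leq C_2(k_1-i,k_2-j)\,\rho^{q_2}
\end{equation*}
on $B_{1/2}$. Multiplying these bounds and summing over the finitely many pairs $(i,j)$, one obtains a constant $C(k_1,k_2)$ such that
\begin{equation*}
  \bigl|(\rho\partial_\rho)^{k_1}\partial_\theta^{k_2}(u_1 u_2)\bigr|\leq C(k_1,k_2)\,\rho^{q_1+q_2},
\end{equation*}
which is precisely the definition of $u_1 u_2\in \tilde{O}(q_1+q_2)$.

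There is no real obstacle here; the only subtlety worth noting is that it is crucial to use $\rho\partial_\rho$ (as opposed to $\partial_\rho$ alone) in the definition of $\tilde{O}(q)$, because $\rho\partial_\rho$ preserves the weight $\rho^q$ under differentiation while $\partial_\rho$ would lower it. This is exactly what makes the product estimate clean: applying $\rho\partial_\rho$ to $\rho^q$ yields $q\,\rho^q$, not $q\,\rho^{q-1}$, so weights of the two factors add without shift.
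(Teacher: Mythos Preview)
Your proof is correct and is exactly the direct verification the paper has in mind; the paper simply declares the lemma ``trivial'' and refers to Definition~\ref{def:O}, which amounts to the Leibniz expansion you wrote out.
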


\begin{lem}\label{lem:smooth}
	Suppose $F(x_1,\cdots,x_N)$ is a smooth function of $N$ variables. Assume that $u_1,\cdots,u_N$ have expansions up to order $q$ for some $q\geq 0$. Then $F(u_1,\cdots,u_N)$ has an expansion up to order $q$.
\end{lem}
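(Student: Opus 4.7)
The plan is a Taylor expansion of $F$ around the vector of constant terms of the $\eta_i$, handling the polynomial part by (iterated) Lemma \ref{lem:product} and the remainder via (iterated) Lemma \ref{lem:OO}.

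First, observe that the only constant function in $\mathcal T_{\log}$ is $1$ (corresponding to $j=k=l=m=0$ in Definition \ref{defn:Tlog}), so each $\eta_i$ decomposes uniquely as $\eta_i = c_i + \tilde\eta_i$, where $c_i \in \mathbb R$ and $\tilde\eta_i \in Span(\mathcal T_{\log})$ is a finite linear combination of basis elements having strictly positive exponent $2j + k/\beta > 0$. Since $|(\log\rho)^m| \leq C_\epsilon \rho^{-\epsilon}$ for $\rho < 1/2$ and any $\epsilon > 0$, and the operators $\rho\partial_\rho$ and $\partial_\theta$ preserve the form $\rho^\sigma(\log\rho)^{m'}\cos l\theta$ (possibly lowering $m'$), we may find $q_0 > 0$ such that $\tilde\eta_i \in \tilde{O}(q_0)$ for every $i$. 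Setting $q_1 = \min(q_0, q) > 0$, we obtain $u_i - c_i = \tilde\eta_i + \tilde{O}(q) \in \tilde{O}(q_1)$.

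Next, choose $M \in \mathbb N$ with $(M+1)q_1 \geq q$, and apply Taylor's theorem at $c = (c_1,\ldots,c_N)$:
\begin{equation*}
F(u_1,\ldots,u_N) = \sum_{|\alpha|\leq M} \frac{D^\alpha F(c)}{\alpha!}(u-c)^\alpha + R_M,
\end{equation*}
where $R_M = \sum_{|\alpha|=M+1} \frac{M+1}{\alpha!} (u-c)^\alpha G_\alpha(u)$ and $G_\alpha(u) = \int_0^1 (1-t)^M D^\alpha F(c + t(u-c))\,dt$. Each $u_i - c_i$ has an expansion up to order $q$ with principal part $\tilde\eta_i \in Span(\mathcal T_{\log})$; since $Span(\mathcal T_{\log})$ is multiplicatively closed, repeated application of Lemma \ref{lem:product} shows that the polynomial part has an expansion up to order $q$.

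It remains to verify $R_M \in \tilde{O}(q)$. Each factor $(u-c)^\alpha$ with $|\alpha| = M+1$ is a product of $M+1$ functions in $\tilde{O}(q_1)$, so iterated Lemma \ref{lem:OO} yields $(u-c)^\alpha \in \tilde{O}((M+1)q_1) \subset \tilde{O}(q)$. Since $F$ is smooth and the $u_i$ lie in $\tilde{O}(0)$ (in particular are bounded), the chain rule gives $G_\alpha(u) \in \tilde{O}(0)$ uniformly in $t \in [0,1]$, as does the integral. Differentiating $R_M$ by Leibniz, any $(\rho\partial_\rho)^{k_1}\partial_\theta^{k_2}$-derivative of $(u-c)^\alpha$ is still a sum of products of $M+1$ derivatives of the $u_i - c_i$, each in $\tilde{O}(q_1)$; hence the product stays in $\tilde{O}((M+1)q_1)$, multiplied by bounded derivatives of $G_\alpha(u)$. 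The main technical point throughout is this derivative book-keeping: one must verify that differentiating $(u-c)^\alpha$ preserves the total multiplicative degree $M+1$ in $\tilde{O}(q_1)$-factors, and that composing the smooth $F$ with bounded $u_i$ only introduces bounded factors. Once this is in place, the estimates on the polynomial part and the remainder combine to give the desired expansion of $F(u_1,\ldots,u_N)$ up to order $q$.
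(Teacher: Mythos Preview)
Your proof is correct and follows essentially the same approach the paper indicates: a multivariable Taylor expansion about the constant parts of the $u_i$, with the polynomial piece handled by the multiplicative closure of $Span(\mathcal T_{\log})$ (Lemma \ref{lem:product}) and the remainder controlled via Lemma \ref{lem:OO}. The paper itself gives no details beyond pointing to the $N=1$ case in \cite{yin2016} and remarking that one replaces the one-variable Taylor formula by the multivariable one; your write-up fills in precisely those details.
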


The proof of Lemma \ref{lem:OO} is trivial and the proof of Lemma \ref{lem:product} is the combination of the facts that $Span(\mathcal T_{\log})$ is multiplicatively closed, that any function in $Span(\mathcal T_{\log})$ is in $\tilde{O}(0)$ and that for $v_1(v_2)$ in $\tilde{O}(q_1)(\tilde{O}(q_2))$ respectively, we have $v_1\cdot v_2 \in \tilde{O}(q_1+q_2)$. All the above mentioned facts can be verified directly by Definition \ref{def:O}.

Lemma \ref{lem:smooth} is a simple generalization of the Lemma 6.8 in \cite{yin2016}, where the case $N=1$ is proved. For general $N$, it suffices to replace the Taylor expansion formula of one variable by the Taylor expansion of multiple variables. We refer the readers to \cite{yin2016} for details of the proof.

\subsection{The proof of Theorem \ref{thm:main}}\label{subsec:proof}
The proof of the main theorem relies on the following two lemmas.
\begin{lem}
	\label{lem:first} If $\varphi$ and all of $\varphi_{\chi}$ have expansion up to order $q$ for some $q>1$, then the right hand side of ($B_\chi$) has a (rhs)-expansion up to order $q-1$.
\end{lem}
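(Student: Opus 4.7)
The plan is to analyze separately the three kinds of terms appearing on the right hand side of $(B_\chi)$---the composite term $H$, the quadratic contribution $F\#(P\varphi_{\chi_a}\cdot\bar P\varphi_{\chi_b})$, and the correction $(\det(\varphi_{\xi\bar\xi})(0,\xi)-\det(\varphi_{\xi\bar\xi}))\tilde\triangle\varphi_\chi$---and to verify that each admits an (rhs)-expansion up to order $q-1$. First, $h$ and its tangential derivatives, being smooth in the holomorphic coordinates, admit expansions of any order: Taylor expansion in $z_1,\bar z_1$ combined with the substitution $z_1=\beta^{1/\beta}\rho^{1/\beta}e^{i\theta}$ produces only terms of the form $\rho^{k/\beta}\cos l\theta$ and $\rho^{k/\beta}\sin l\theta$ with $k-l$ a non-negative even integer, all belonging to $\mathcal T_{\log}$. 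Combined with the hypothesis on $\varphi_\chi$ and Lemma \ref{lem:smooth}, both $F$ and $H$ acquire expansions $F=F_0+\tilde O(q)$ and $H=H_0+\tilde O(q)$ with $F_0,H_0\in\text{Span}(\mathcal T_{\log})\subset\text{Span}(\mathcal T_{rhs})$, which immediately handles the $H$ contribution.

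For the quadratic piece, Lemma \ref{lem:pair} rewrites $P\varphi_{\chi_a}\cdot\bar P\varphi_{\chi_b}$ as a constant-coefficient quadratic polynomial in $\partial_\rho\varphi_{\chi_a}$, $\rho^{-1}\partial_\theta\varphi_{\chi_a}$ and the corresponding quantities for $b$. Since $\rho\partial_\rho$ and $\partial_\theta$ preserve $\tilde O(q)$ by Definition \ref{def:O}, writing $\varphi_{\chi_a}=\eta_a+\tilde O(q)$ gives each such derivative as $\rho^{-1}\cdot(\text{element of }\text{Span}(\mathcal T_{\log}))+\tilde O(q-1)$; a direct inspection of $\mathcal T_{\log}$ (using in particular condition (3)) shows that $\rho\partial_\rho$ and $\partial_\theta$ send every element either to zero or to a \emph{non-constant} element of $\text{Span}(\mathcal T_{\log})$. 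Lemma \ref{lem:logrhs}(2) then places the leading $\rho^{-2}(\cdots)(\cdots)$ contribution in $\text{Span}(\mathcal T_{rhs})$, and multiplication by the factor $F$ preserves this membership through Lemma \ref{lem:logrhs}(1). The residual error terms reduce to bounds of the form $v\cdot \tilde O(q)\subset \tilde O(q-1)$ for $v\in\mathcal T_{rhs}$, which hold because the minimal $\rho$-exponent occurring in $\mathcal T_{rhs}$ strictly exceeds $-1$---this uses $1/\beta>1$ together with the explicit exclusion of $\rho^{-2}$ and of $\rho^{k/\beta-2}(\log\rho)^{k-1}\cos k\theta$ from $\mathcal T_{rhs}$.

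For the correction term, expanding $\det(\varphi_{\xi\bar\xi})$ as a polynomial in the second tangential derivatives of $\varphi$ and applying Lemma \ref{lem:product} yields $\det(\varphi_{\xi\bar\xi})=\mu_0(\xi)+\mu_1+\tilde O(q)$ with $\mu_1\in\text{Span}(\mathcal T_{\log})$ containing no constant-in-$(\rho,\theta)$ term, so that the difference factor equals $-\mu_1+\tilde O(q)$. Meanwhile $\tilde\triangle\varphi_\chi=\tilde\triangle\eta_\chi+\tilde O(q-2)$, and the explicit formula \eqref{eqn:log} shows that $\rho^2\tilde\triangle\eta_\chi\in\text{Span}(\mathcal T_{\log})$ has only strictly positive $\rho$-exponents (so it is either zero or non-constant). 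Writing the main contribution as $\rho^{-2}(-\mu_1)\cdot(\rho^2\tilde\triangle\eta_\chi)$ and invoking Lemma \ref{lem:logrhs}(2) places it in $\text{Span}(\mathcal T_{rhs})$, while the error terms are controlled by Lemma \ref{lem:OO} together with the same lowest-order estimate as above. The main obstacle will be the careful bookkeeping required to verify the non-constancy hypothesis in Lemma \ref{lem:logrhs}(2) at each step and to ensure that no intermediate term ever slips into the excluded set of $\mathcal T_{rhs}$; condition (3) in the definition of $\mathcal T_{\log}$ is precisely what makes this verification possible.
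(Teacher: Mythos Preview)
Your proposal is correct and follows essentially the same approach as the paper's own proof: you treat the three pieces of the right hand side separately, use Lemma~\ref{lem:smooth} for $H$ and $F$, invoke Lemma~\ref{lem:pair} together with Lemma~\ref{lem:logrhs}(2) for the quadratic term $P\varphi_{\chi_a}\cdot\bar P\varphi_{\chi_b}$, and handle the correction term by splitting off the non-constant leading parts and applying Lemma~\ref{lem:logrhs}(2) again. The organization and the key observations (non-constancy of $\rho\partial_\rho\eta$, $\partial_\theta\eta$, and $\rho^2\tilde\triangle\eta$; the lowest $\rho$-exponent in $\mathcal T_{rhs}$ exceeding $-1$) match the paper's argument almost line for line.
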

\begin{lem}
	\label{lem:second}
	If $v$ has a (rhs)-expansion up to order $q$ and $u$ is a bounded solution to
	\begin{equation*}
		\tilde{\triangle} u =v,
	\end{equation*}
	then $u$ has an expansion up to order $q'$ for any $q'<q+2$.
\end{lem}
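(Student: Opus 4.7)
The plan is to peel off $v$'s principal part using Lemma \ref{lem:rhs}, reducing to a Poisson-type equation with a remainder in $\tilde{O}(q)$, and then to solve this Poisson equation by separation of variables. Write $v = \eta_0 + r_0$ with $\eta_0 \in Span(\mathcal{T}_{rhs})$ and $r_0 \in \tilde{O}(q)$. By Lemma \ref{lem:rhs}, choose $w_0 \in Span(\mathcal{T}_{\log})$ with $\tilde{\triangle} w_0 = \eta_0$; note that $w_0$ is bounded near $\rho = 0$ since every element of $\mathcal{T}_{\log}$ either vanishes at $0$ or is a constant. Setting $f := u - w_0$, the function $f$ is bounded and satisfies $\tilde{\triangle} f = r_0 \in \tilde{O}(q)$. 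So it suffices to prove: if $\tilde{\triangle} f = r \in \tilde{O}(q)$ and $f$ is bounded near $0$, then $f$ has an expansion up to order $q'$ for any $q' < q+2$.

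The core technical step is to construct a particular solution $p$ of $\tilde{\triangle} p = r$ on $B_{1/2}$ with $p \in \tilde{O}(q')$ for any prescribed $q' < q+2$. I would do this by Fourier decomposition in $\theta$: write
\begin{equation*}
r(\rho,\theta) = \sum_{l\geq 0}\bigl(a_l(\rho)\cos l\theta + b_l(\rho)\sin l\theta\bigr),
\end{equation*}
and for each $l$ solve the Euler-type ODE $p_l'' + \rho^{-1} p_l' - l^2\beta^{-2}\rho^{-2} p_l = a_l(\rho)$ (and similarly for the sine coefficient) by variation of parameters against the homogeneous pair $\rho^{\pm l/\beta}$. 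Since $r \in \tilde{O}(q)$ controls arbitrarily many $\partial_\theta$-derivatives, repeated integration by parts gives rapid decay of the Fourier coefficients: $\abs{(\rho\partial_\rho)^{k} a_l(\rho)} \leq C_{k,N} (1+l)^{-N}\rho^q$ for any $N$. The variation-of-parameters integrals then yield $\abs{p_l(\rho)} \leq C_N(1+l)^{-N}\rho^{q+2}$ away from the resonance $q + 2 = l/\beta$; at resonance a single $\log\rho$ factor may appear, which is still absorbed into $\tilde{O}(q')$ for every $q' < q+2$. Summing over $l$ and propagating the bound to all $(\rho\partial_\rho)^{k_1}\partial_\theta^{k_2}$-derivatives (the $\theta$-derivatives only produce polynomial powers of $l$, absorbed by the $N$-decay) delivers $p \in \tilde{O}(q')$.

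With this $p$, the function $f - p$ is $\tilde{\triangle}$-harmonic on the punctured $B_{1/2}$ and bounded near the singular set. An expansion for bounded harmonic functions on the two-dimensional cone (precisely the content of part (1) of Lemma \ref{lem:harmonic} alluded to in the introduction) then gives
\begin{equation*}
f - p = \sum_{l \geq 0}\bigl(\alpha_l \rho^{l/\beta}\cos l\theta + \gamma_l \rho^{l/\beta}\sin l\theta\bigr) + \tilde{O}(N)
\end{equation*}
for any order $N$, and every summand lies in $\mathcal{T}_{\log}$ (these are the $j=0$, $k=l$, $m=0$ elements). Combining, $u = w_0 + (f - p) + p$ is the sum of an element of $Span(\mathcal{T}_{\log})$ and a function in $\tilde{O}(q')$, completing the proof.

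The principal obstacle is the variation-of-parameters estimate for the particular solution, particularly the bookkeeping at resonant Fourier modes where logarithmic corrections appear. The strict inequality $q' < q + 2$ in the conclusion is exactly what absorbs these logs; if one wanted $q' = q + 2$, additional explicit terms from $Span(\mathcal{T}_{\log})$ would have to be incorporated into $\eta$, in the spirit of Lemma \ref{lem:solveLaplace}. Propagating the zeroth-order estimate to all weighted derivatives is routine once the Fourier-decay machinery is in place.
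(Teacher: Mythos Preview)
Your proposal is correct and follows the same three-step structure as the paper's proof: peel off the principal part via Lemma \ref{lem:rhs}, produce a particular solution in $\tilde{O}(q')$ for the $\tilde{O}(q)$ remainder, and then invoke the bounded-harmonic expansion (part (1) of Lemma \ref{lem:harmonic}). The only difference is that the paper quotes the existence of the particular solution as a black box (part (2) of Lemma \ref{lem:harmonic}, i.e.\ Lemma 6.10 of \cite{yin2016}), whereas you sketch its proof directly via Fourier decomposition in $\theta$ and variation of parameters for the resulting Euler ODEs; the two arguments are otherwise identical.
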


Before the proof of these two lemmas, we show how they imply Theorem \ref{thm:main}.
\begin{proof}[Proof of Theorem \ref{thm:main} (assuming Lemma \ref{lem:first} and \ref{lem:second})]
	We have as our starting point that $\varphi$ and $\varphi_{\chi}$ are in Donaldson's $C^{2,\alpha}_\beta$ space. Theorem \ref{thm:interior} and Lemma \ref{lem:start} imply that $\varphi$ and $\varphi_{\chi}$ have expansions up to some order $q$ with $q>1$.
	The rest of the proof is easy bootstrapping argument using Lemma \ref{lem:first} and \ref{lem:second}.
\end{proof}

The proof of Lemma \ref{lem:first} depends heavily on the structure of the complex Monge-Amp\`ere equation.
\begin{proof}[Proof of Lemma \ref{lem:first}]
	The right hand side of ($B_{\chi}$) consists of three terms:
	\begin{enumerate}[(a)]
		\item 
			\begin{equation*}
				F(\varphi_{\chi_2,\ldots,\varphi_{\chi_{l+2}}}) \# (P \varphi_{\chi_a}\cdot \bar{P} \varphi_{\chi_b});	
			\end{equation*}
		\item 
			\begin{equation*}
				H(h,\cdots,h_{\chi_l},\varphi,\cdots,\varphi_{\chi_l});
			\end{equation*}
		\item
			\begin{equation*}
		(\det(\varphi_{\xi\bar{\xi}})(0,\xi)- \det(\varphi_{\xi\bar{\xi}})(\rho,\theta,\xi)) \tilde{\triangle} \varphi_\chi.
			\end{equation*}
\end{enumerate}
We discuss (b) first. By our assumption in Theorem \ref{thm:main}, $h$ and its tangent derivatives are smooth in $z_1$. Hence, they have expansion up to any order. One can either check this directly by using the definition, or use Lemma \ref{lem:smooth} and notice that $z_1$ (as a function of $(\rho,\theta)$) has expansion up to any order. By the assumptions about $\varphi$ in Lemma \ref{lem:first}, Lemma \ref{lem:smooth} again implies that $H$ has expansion up to order $q$. Since $\mathcal T_{rhs}\supset \mathcal T_{\log}$, the term (b) has the required expansion.

For (c), we notice that there are $\eta_1,\eta_2\in Span(\mathcal T_{\log})$ such that
\begin{equation*}
	\det(\varphi_{\xi\bar{\xi}})(0,\xi)- \det(\varphi_{\xi\bar{\xi}})(\rho,\theta,\xi)= \eta_1 + \tilde{O}(q)
\end{equation*}
and
\begin{equation*}
	\tilde{\triangle} \varphi_\chi = \rho^{-2} \eta_2 + \tilde{O}(q-2).
\end{equation*}
Here, the existence of $\eta_1$ follows from Lemma \ref{lem:smooth} and the assumptions of the lemma. For $\eta_2$, we first find $\tilde{\eta}_2$ satisfying $\varphi_\chi = \tilde{\eta}_2 + \tilde{O}(q)$ and then check that $\tilde{\triangle}$ maps $\tilde{O}(q)$ to $\tilde{O}(q-2)$ and that for any $v\in Span(\mathcal T_{\log})$, $\tilde{\triangle}v$ is in $Span(\rho^{-2} \mathcal T_{\log})$ (see \eqref{eqn:log}).

We claim that
\begin{equation}\label{eqn:eta12}
	\eta_1,\eta_2 \in \tilde{O}(\tilde{q})\quad \mbox{for some } \quad \tilde{q}>1.
\end{equation}
On one hand, every function (hence $\eta_1$ and $\eta_2$) in $Span(\mathcal T_{\log})$ is in $\tilde{O}(0)$. On the other hand, it is easy to see that both $\eta_1$ and $\eta_2$ have no constant term. Note that all the rest of the functions in $\mathcal T_{\log}$ are bounded by $C\rho^{\tilde{q}}$ for some $\tilde{q}>1$ (depending on $\beta$). 

With \eqref{eqn:eta12} in mind, we compute (c) as
\begin{eqnarray*}
	&& (\eta_1+\tilde{O}(q))(\rho^{-2}\eta_2+\tilde{O}(q-2)) \\
	&=& \rho^{-2} \eta_1\cdot \eta_2 + \eta_1\cdot \tilde{O}(q-2) + \rho^{-2}\eta_2 \cdot \tilde{O}(q) + \tilde{O}(q)\cdot \tilde{O}(q-2).
\end{eqnarray*}
Given \eqref{eqn:eta12} and Lemma \ref{lem:OO}, the sum of the last three terms above is in $\tilde{O}(q-1)$. The second part of Lemma \ref{lem:logrhs} implies that $\rho^{-2} \eta_1\cdot \eta_2$ is in $Span(\mathcal T_{rhs})$ and hence finishes the proof for (c).

For the proof for (a), we claim that there exists $\eta_4\in Span(\mathcal T_{rhs})$ such that
\begin{equation}\label{eqn:P}
	P\varphi_{\chi_a}\cdot \bar{P}\varphi_{\chi_b} = \eta_4 + \tilde{O}(q-1).
\end{equation}
Before we prove the claim, we see how it implies that (a) has the required expansion. By Lemma \ref{lem:smooth}, we know that $F$ has expansion up to order $q$ so that
\begin{equation*}
	F \# (P\varphi_{\chi_a}\cdot \bar{P}\varphi_{\chi_b}) = (\eta_3+\tilde{O}(q))(\eta_4+\tilde{O}(q-1))
\end{equation*}
for some $\eta_3\in Span(\mathcal T_{\log})$.
By the first part of Lemma \ref{lem:logrhs}, $\eta_3\cdot \eta_4$ is in $Span(\mathcal T_{rhs})$. The remaining terms, $\eta_4\cdot \tilde{O}(q)$, $\eta_3\cdot \tilde{O}(q-1)$ and $\tilde{O}(q-1)\cdot \tilde{O}(q)$ are obviously in $\tilde{O}(q-1)$.

The rest of the proof is devoted to the proof of the claim. By our assumption, we may assume that
\begin{equation*}
	\varphi_{\chi_a}= \eta_a + \tilde{O}(q) \quad \mbox{and} \quad \varphi_{\chi_b}=\eta_b +\tilde{O}(q)
\end{equation*}
for $\eta_a$ and $\eta_b$ in $Span(\mathcal T_{\log})$. By Lemma \ref{lem:pair}, the right hand side of \eqref{eqn:P} is a quadratic polynomial of
\begin{equation*}
	\partial_\rho \eta_a + \tilde{O}(q-1),\quad \frac{1}{\rho}\partial_\theta \eta_a + \tilde{O}(q-1)
\end{equation*}
and
\begin{equation*}
	\partial_\rho \eta_b + \tilde{O}(q-1),\quad  \frac{1}{\rho}\partial_\theta \eta_b + \tilde{O}(q-1).
\end{equation*}
By the definition of $\mathcal T_{\log}$, $\partial_\rho \eta_a$, $\partial_\rho \eta_b$, $\frac{1}{\rho}\partial_\theta \eta_a$ and $\frac{1}{\rho}\partial_\theta \eta_b$ are in $\tilde{O}(0)$ because among all terms in $\mathcal T_{\log}$, except the constant term which is killed by the derivative, the lowest order term decays faster than $\rho$. This implies that the terms like $\partial_\rho \eta_a \cdot \tilde{O}(q-1)$ is in $\tilde{O}(q-1)$.

It remains to show $\partial_\rho \eta_a \cdot \partial_\rho \eta_b$ is in $Span(\mathcal T_{rhs})$. Similar argument works for $\partial_\rho \eta_a \cdot \frac{1}{\rho}\partial_\theta \eta_b$, $\frac{1}{\rho}\partial_\theta \eta_b\cdot \partial_\rho \eta_a$ and $\frac{1}{\rho^2} \partial_\theta \eta_a \cdot \partial_\theta \eta_b$. 

This is a consequence of the second part of Lemma \ref{lem:logrhs}. In fact, if we set $\tilde{\eta}_a= (\rho\partial_\rho) \eta_a$ and $\tilde{\eta}_b= (\rho\partial_\rho) \eta_b$, then $\partial_\rho \eta_a \cdot \partial_\rho \eta_b = \rho^{-2} \tilde{\eta}_a \cdot \tilde{\eta}_b$ and we can check that $\tilde{\eta}_a$ and $\tilde{\eta}_b$ are non-constant function in $Span(\mathcal T_{\log})$.
\end{proof}

Next, we move to the proof of Lemma \ref{lem:second}.
\begin{proof} [Proof of Lemma \ref{lem:second}]
	Note that we apply the following argument to every equation ($B_\chi$) simultaneously. By the assumption, the right hand side is given by $\eta+\tilde{O}(q)$ for some $\eta\in Span(\mathcal T_{rhs})$. Lemma \ref{lem:rhs} implies that there exists $\eta'$ in $Span(\mathcal T_{log})$ such that
	\begin{equation*}
		\tilde{\triangle} \eta' =\eta.
	\end{equation*}
	Pretending the constant on the left hand side of ($B_\chi$) is $1$, we have
	\begin{equation}\label{eqn:almostdone}
		\tilde{\triangle} (\varphi_{\chi}-\eta') = \tilde{O}(q).
	\end{equation}
	To finish the proof of Lemma \ref{lem:second}, we need
	\begin{lem}\label{lem:harmonic}
	(1) (Lemma 6.7 in \cite{yin2016}) Any bounded ($\tilde{\triangle}$) harmonic function defined on $\set{(\rho,\theta)|\quad \rho\in (0,1], \theta\in S^1}$ has expansion up to any order.

	(2) (Lemma 6.10 in \cite{yin2016})For each $v\in \tilde{O}(q)$ and $q'<q+2$, there is $u$ in $\tilde{O}(q')$ with
		\begin{equation*}
			\tilde{\triangle} u =v.
		\end{equation*}
	\end{lem}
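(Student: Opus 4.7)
The plan for both parts is to exploit that Fourier decomposition in $\theta$ diagonalizes $\tilde{\triangle}$ into a family of ordinary differential equations in $\rho$. I would treat the two parts separately, then comment on the common technical point.

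For part (1), I would write a bounded harmonic function as $u(\rho,\theta) = \sum_l (A_l(\rho) \cos l\theta + B_l(\rho) \sin l\theta)$. The equation $\tilde{\triangle} u = 0$ decouples into Euler equations
$$A_l''(\rho) + \tfrac{1}{\rho}A_l'(\rho) - \tfrac{l^2}{\beta^2 \rho^2} A_l(\rho) = 0,$$
whose solution space is spanned by $\rho^{l/\beta}$ and $\rho^{-l/\beta}$ for $l \geq 1$ (and by $1, \log \rho$ for $l = 0$). Boundedness on $(0,1]$ eliminates the singular modes, forcing $A_l(\rho) = a_l \rho^{l/\beta}$ and analogously $B_l(\rho) = b_l \rho^{l/\beta}$. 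Each summand $\rho^{l/\beta}\cos l\theta$ lies in $\mathcal T_{\log}$ (with $j=0$, $k=l$, $m=0$), so truncating at Fourier mode $N$ produces an element $\eta_N \in Span(\mathcal T_{\log})$. The tail $u - \eta_N$ is dominated by $\sum_{l > N}(|a_l|+|b_l|)\rho^{l/\beta}$; since interior elliptic regularity makes $u|_{\rho=1/2}$ smooth on $S^1$, its Fourier coefficients decay faster than any polynomial in $l$, and $\eta_N - u \in \tilde{O}(q)$ follows by choosing $N$ large. Term-by-term differentiation handles the $(\rho\partial_\rho)^{k_1}\partial_\theta^{k_2}$ derivatives in the definition of $\tilde{O}(q)$.

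For part (2), I would Fourier-decompose $v = \sum_l (v_l^c(\rho) \cos l\theta + v_l^s(\rho) \sin l\theta)$ and, for each $l$, solve the inhomogeneous Euler ODE by variation of parameters against the fundamental pair $\rho^{\pm l/\beta}$. Given the pointwise bound $|v_l(\rho)| \leq C_l \rho^q$ from $v \in \tilde{O}(q)$, the particular solution satisfies $|u_l(\rho)| \leq C_l' \rho^{q+2}$ in the generic case; resonances with the homogeneous solutions (when $l/\beta = q+2$) produce a single $\log \rho$ factor, which is precisely why the statement requires $q' < q+2$ strictly rather than $q' = q+2$. The rapid decay of $v_l$ in $l$, obtained by integrating by parts in $\theta$ using that all $\partial_\theta^k v$ are also in $\tilde{O}(q)$, ensures that the reassembled series $u = \sum_l (u_l^c \cos l\theta + u_l^s \sin l\theta)$ converges together with its $(\rho\partial_\rho)^{k_1}\partial_\theta^{k_2}$ derivatives.

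The main obstacle in both parts is controlling the dependence on the Fourier index $l$ uniformly: the variation-of-parameters formula and the derivatives $\rho\partial_\rho, \partial_\theta$ both introduce constants that grow polynomially in $l$, and these have to be beaten by the superpolynomial decay of the Fourier coefficients. A convenient bookkeeping device is to switch to the logarithmic variable $\tau = \log \rho$, under which the Euler operator becomes constant-coefficient and the scaling structure of the estimates is transparent. Since the lemma is directly imported from Lemmas 6.7 and 6.10 of \cite{yin2016}, I would simply verify that the argument there (which follows essentially the outline above) applies verbatim in our setting.
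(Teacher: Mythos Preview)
Your proposal is correct. The paper itself gives no argument beyond citing Lemmas~6.7 and~6.10 of \cite{yin2016}, so your outline (Fourier decomposition in $\theta$, mode-by-mode Euler ODE analysis, and tail control via rapid decay of the Fourier coefficients) already goes further than the paper and matches the standard approach one expects in the cited reference.
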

	Since this lemma appeared in exactly the same form and is proved in detail in \cite{yin2016}, we refer the readers to that paper for the proof.

	With the above lemma and \eqref{eqn:almostdone}, there exists $f\in \tilde{O}(q')$ for any $q'<q+2$ satisfying
	\begin{equation*}
		\tilde{\triangle} (\varphi_\chi-\eta')= \tilde{\triangle}(f),
	\end{equation*}
	which implies that $\varphi_\chi-\eta'-f$ is a bounded harmonic function and hence has expansion up to any order by (1) of Lemma \ref{lem:harmonic}.
\end{proof}

\subsection{The proof of Corollary \ref{cor:main}} \label{subsec:cor}
By \eqref{eqn:cor} and \eqref{eqn:mainh}, we know $\varphi$ satisfies \eqref{eqn:main}. To apply Theorem \ref{thm:main}, we need to check (S1)-(S3). 

Since $\psi_0$ is the potential function of the smooth K\"ahler form $\omega_0$ in the holomorphic coordinates $\set{z_i}$, we know that $\psi_0$ is smooth and that $(\psi_0)_{i\bar{j}}$ is smooth and positive definite. Moreover, being the metric of the line bundle $L$, $\tilde{h}$ is also smooth and positive. Hence (S1) follows from \eqref{eqn:mainh}.

It follows from the definition of $C^{2,\alpha}_\beta$ (see (D1)-(D4) in Section \ref{subsec:estimate}) and $\beta\in (0,1)$ that smooth functions in holomorphic coordinates are in $C^{2,\alpha}_\beta$ for some $\alpha>0$ depending on $\beta$. So $\psi_0, \tilde{h} \in C^{2,\alpha}_\beta$. Moreover, one can check directly that $\abs{z_1}^{2\beta}$ is also in $C^{2,\alpha}_\beta$. The best way to see this is to notice that
\begin{equation*}
	\abs{z_1}^{2\beta}= \beta^2 \rho^2
\end{equation*}
and to use the equivalent definition (P1)-(P3).  Therefore, $\abs{s}_{h_0}^{2\beta}= \tilde{h}^\beta \abs{z_1}^{2\beta}$ is in $C^{2,\alpha}_\beta$. Finally, since $\psi$ is in $C^{2,\alpha}_\beta$ as assumed in the definition of conical K\"ahler-Einstein metric, so is $\varphi$, which confirms (S2) (for some $\alpha>0$).

(S3) follows from the definition of conical K\"ahler-Einstein metric. 

Now, we can apply Theorem \ref{thm:main} to see that $\varphi$ has the expansion up to any order. To see that this is also true for $\psi$, it suffices to check that 
\begin{equation*}
	\varphi-\psi = \psi_0 + \delta \abs{s}_{h_0}^{2\beta} \quad (\text{by} \quad \text{\eqref{eqn:varphi}})
\end{equation*}
has expansion up to any order. 
Since $\psi_0$ and $\tilde{h}$ are smooth in holomorphic coordinates and
\begin{equation*}
	\abs{s}_{h_0}^{2\beta} = \tilde{h}^\beta \abs{z_1}^{2\beta} = \tilde{h}^{\beta} \beta^2 \rho^2,
\end{equation*}
it remains to check all smooth functions in holomorphic coordinates have expansions up to any order, which is a consequence of Lemma \ref{lem:smooth} and the fact that $z_1$ has expansion up to any order.

%

\appendix

\section{Estimate of some elliptic system}
In this appendix, we prove some estimates for the following elliptic system defined on $B_1\subset \mathbb C^n$,
\begin{equation}
	\triangle_g g_{i\bar{j}} - g^{k\bar{m}} g^{n\bar{l}} \pfrac{g_{i\bar{m}}}{z_n} \pfrac{g_{k\bar{j}}}{z_{\bar{l}}} = \lambda\rho_0^2 g_{i\bar{j}} + h_{\tilde{V},i\bar{j}} .
	\label{eqn:gia}
\end{equation}
When $h=0$, this is the equation satisfied by the metric tensor of K\"ahler-Einstein metric. We always assume 
\begin{equation}
	\abs{\lambda \rho_0^2} + \norm{h_{\tilde{V},i\bar{j}}}_{C^0(B_1)}\leq \Lambda.
	\label{eqn:Lambda}
\end{equation}

The methods used here are from the book of Giaquinta \cite{giaquinta1983multiple} and are by now classical. In contrast to the theorems in \cite{giaquinta1983multiple}, we prove effective estimates instead of just regularity statements. 

\begin{rem*}
	Note that in this appendix, the real dimension of the domain is $2n$, instead of $n$.
\end{rem*}

We first bound the $L^2$ norm of the gradient of $g_{i\bar{j}}$.
\begin{lem}
	\label{lem:energy}
	Suppose that $g_{i\bar{j}}$ are some smooth complex-valued functions defined on $B_1\subset \mathbb C^n$ solving \eqref{eqn:gia} whose coefficients satisfy \eqref{eqn:Lambda}. If for some $\sigma>0$, we have
	\begin{equation*}
		\sigma^{-1} g_{i\bar{j}}\leq \delta_{ij}\leq \sigma g_{i\bar{j}} \quad \mbox{on} \quad B_1,
	\end{equation*}
	then 
	\begin{equation*}
		\norm{\nabla g_{i\bar{j}}}_{L^2(B_{3/4})}\leq C (\sigma,\Lambda,\norm{g_{i\bar{j}}}_{C^\alpha(B_1)}).	
	\end{equation*}
\end{lem}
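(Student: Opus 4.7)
The plan is to establish a local Caccioppoli-type energy estimate on small balls and then cover $B_{3/4}$ by finitely many such balls. The essential difficulty in \eqref{eqn:gia} is the quadratic gradient nonlinearity $g^{k\bar m}g^{n\bar l}\partial_n g_{i\bar m}\partial_{\bar l}g_{k\bar j}$, which has ``natural growth'' and cannot be handled by a naive energy argument alone; the point is to exploit the $C^\alpha$-hypothesis on $g_{i\bar j}$ so that on balls of small radius $r$ the oscillation $\osc_{B_r} g\lesssim r^\alpha$ is small enough to let this bad term be absorbed into the principal ellipticity.

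Concretely, I would fix $x_0\in B_{3/4}$ and a radius $r>0$ (to be chosen), take a standard cutoff $\eta\in C_c^\infty(B_r(x_0))$ with $\eta\equiv 1$ on $B_{r/2}(x_0)$ and $\abs{\nabla\eta}\le C/r$, and set $G_{i\bar j}:=g_{i\bar j}-g_{i\bar j}(x_0)$ so that $\abs{G_{i\bar j}}\le \norm{g}_{C^\alpha(B_1)}\, r^\alpha$ on $B_r(x_0)$. I would then multiply \eqref{eqn:gia} by $\eta^2\,\overline{G_{i\bar j}}$, sum over $i,j$, and integrate. Integration by parts on the principal term produces
\[
\int \eta^2\,g^{n\bar l}\,\partial_n G_{i\bar j}\,\partial_l\overline{G_{i\bar j}}\,dx,
\]
which by uniform ellipticity $\sigma^{-1}I\le g\le \sigma I$ and the Hermitian identity $\partial_{\bar n}g_{i\bar j}=\overline{\partial_n g_{j\bar i}}$ is bounded below by a multiple of $\int \eta^2\abs{\nabla g}^2$. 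The cross terms coming from differentiating $\eta^2$ and $g^{n\bar l}$ contribute expressions of the form $\int \eta\abs{\nabla\eta}\abs{G}\abs{\nabla g}$, which are absorbable into the coercive term by Cauchy--Schwarz at the cost of a harmless $C(\sigma,\norm{g}_{C^\alpha})/r^2\cdot\abs{B_r}$ contribution.

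On the right-hand side of \eqref{eqn:gia}, the lower-order piece $\int \eta^2\,\overline{G_{i\bar j}}(\lambda\rho_0^2 g_{i\bar j}+h_{\tilde{V},i\bar j})$ is bounded trivially by $C(\sigma)\Lambda\,\norm{G}_{L^\infty(B_r)}\abs{B_r}$ using \eqref{eqn:Lambda}. The quadratic gradient term is the main obstacle; it is dominated by
\[
C\sigma^2\,\norm{G}_{L^\infty(B_r)}\int \eta^2\abs{\nabla g}^2\,dx \;\le\; C\sigma^2\,\norm{g}_{C^\alpha(B_1)}\,r^\alpha\int \eta^2\abs{\nabla g}^2\,dx,
\]
and choosing $r=r_0$ so small that the prefactor is less than half the coercivity constant allows full absorption into the left-hand side. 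Rearranging then gives $\int_{B_{r_0/2}(x_0)}\abs{\nabla g}^2\le C(\sigma,\Lambda,\norm{g}_{C^\alpha(B_1)})$ with $C$ independent of $x_0$, and since $r_0$ depends only on $\sigma$ and $\norm{g}_{C^\alpha}$, a finite covering of $B_{3/4}$ by balls of radius $r_0/2$ yields the claim.

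The step I expect to require the most care is the bookkeeping for the Hermitian structure, namely verifying that multiplication by $\eta^2\overline{G_{i\bar j}}$ and summation over $i,j$ really produces a coercive principal term after integration by parts and that the cross terms involving $\partial g^{n\bar l}=-g^{n\bar m}g^{k\bar l}\partial g_{k\bar m}$ remain absorbable. Once that algebra is set up, the rest is a textbook small-oscillation Caccioppoli estimate for quasilinear elliptic systems with natural gradient growth, in the style of Chapter VI of \cite{giaquinta1983multiple}.
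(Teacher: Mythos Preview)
Your proposal is correct and follows essentially the same strategy as the paper: a local Caccioppoli estimate using the test function $\eta^2(g_{i\bar j}-g_{i\bar j}(x_0))$, exploiting the $C^\alpha$ oscillation bound to absorb both the quadratic gradient term and the coefficient variation, then covering $B_{3/4}$ by balls of the chosen small radius. The only cosmetic difference is that the paper freezes the principal coefficients at $x_0$ (writing $g^{k\bar l}(x_0)\partial_k\bar\partial_l g_{i\bar j}$ plus a remainder $(g^{k\bar l}(x_0)-g^{k\bar l})\partial_k\bar\partial_l g_{i\bar j}$) rather than integrating by parts with variable $g^{k\bar l}$ and tracking the $\partial g^{k\bar l}$ cross term as you do; both routes produce terms controlled by $R^\alpha\int\eta^2\abs{\nabla g}^2$ and are equivalent in substance.
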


\begin{proof}
	For any point $x_0$ in $B_{3/4}$ and $R>0$ to be determined in the proof, let $\eta$ be some smooth cut-off function supported in $B_R(x_0)$ with $\eta\equiv 1$ in $B_{R/2}(x_0)$ and $\abs{\nabla \eta}\leq CR^{-1}$. Multiplying both sides of \eqref{eqn:gia} by $(g_{i\bar{j}}-g_{i\bar{j}}(x_0)) \eta^2$ and freezing the coefficients of the leading term in \eqref{eqn:gia} gives
	\begin{eqnarray}\label{eqn:gia1}
	0&=& g^{k\bar{l}}(x_0) \partial_k\bar{\partial}_l g_{i\bar{j}} (g_{i\bar{j}}-g_{i\bar{j}}(x_0)) \eta^2 \\ \nonumber
	&& - (g^{k\bar{l}}(x_0)-g^{k\bar{l}}) \partial_k \bar{\partial}_l g_{i\bar{j}} (g_{i\bar{j}}-g_{i\bar{j}}(x_0)) \eta^2 \\ \nonumber
	&& - (g\cdot g\cdot D g\cdot D g +\lambda \rho_0^2 g +h) \cdot (g_{i\bar{j}}-g_{i\bar{j}}(x_0)) \eta^2 .
\end{eqnarray}
Note that we have omitted subscripts in the above computation when they are not essential to the proof.

By the H\"older continuity of $g_{i\bar{j}}$, we have
\begin{equation}\label{eqn:hlder}
	\abs{g_{i\bar{j}}-g_{i\bar{j}}(x_0)} \leq C R^{\alpha}.
\end{equation}
Integration by parts of \eqref{eqn:gia1}, \eqref{eqn:hlder} and Young's inequality imply that
\begin{eqnarray*}
	\int \abs{D g}^2 \eta^2 &\leq& C \int R^\alpha \abs{D g} \eta \abs{\nabla \eta} + R^\alpha \abs{D g}^2 \eta^2 \\
	&& + C\int R^{2\alpha} \abs{D g} \eta \abs{\nabla \eta} + R^\alpha \abs{D g}^2 \eta^2 + C R^{2n+\alpha} \\
	&\leq& (\frac{1}{2}+ CR^\alpha) \int \abs{D g}^2 \eta^2 + CR^{2\alpha} \int \abs{\nabla \eta}^2 + C R^{2n+\alpha}.
\end{eqnarray*}
Now we can choose $R$ so small (depending only on $\sigma, \alpha$ and the H\"older norm of $g_{i\bar{j}}$) that the first term in the right hand side is absorbed by the left hand side to give
\begin{equation}\label{eqn:bettercalpha}
	\int_{B_{R/2}(x_0)} \abs{D g}^2 dx \leq CR^{2n-2+2\alpha}\leq C.
\end{equation}
The lemma then follows from the above inequality by covering $B_{3/4}$ by balls of radius $R$.
\end{proof}

Next, we prove $C^\gamma$ estimate of $g_{i\bar{j}}$ for any $\alpha<\gamma<1$.
\begin{lem}\label{lem:gia2}
	For $g_{i\bar{j}}$ as in Lemma \ref{lem:energy} and any $\alpha<\gamma<1$, we have
	\begin{equation*}
		\norm{g_{i\bar{j}}}_{C^{\gamma}(B_{3/4})} \leq C(\sigma,\Lambda,\gamma,\norm{g_{i\bar{j}}}_{C^\alpha(B_1)}).
	\end{equation*}
\end{lem}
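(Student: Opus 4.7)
I plan to prove this via a Campanato/Morrey decay argument in the style of Giaquinta, freezing the leading coefficients at each interior point and comparing with a constant-coefficient harmonic function. Fix $x_0 \in B_{3/4}$ and, for small $R$ with $B_R(x_0)\subset B_1$, let $h_{i\bar j}$ be the unique solution on $B_R(x_0)$ of the frozen-coefficient homogeneous system
\[
g^{k\bar l}(x_0)\,\partial_k\bar\partial_l h_{i\bar j} = 0, \qquad h_{i\bar j}\big|_{\partial B_R(x_0)} = g_{i\bar j}.
\]
Because $g^{k\bar l}(x_0)$ is a positive-definite constant Hermitian matrix (with bounds depending only on $\sigma$), the classical reverse-Poincar\'e/mean-value estimates for elliptic systems with constant coefficients give, for $\rho\leq R$,
\[
\int_{B_\rho(x_0)} |Dh|^2 \leq C\Big(\tfrac{\rho}{R}\Big)^{2n}\int_{B_R(x_0)} |Dh|^2 \leq C\Big(\tfrac{\rho}{R}\Big)^{2n}\int_{B_R(x_0)} |Dg|^2,
\]
where the last inequality uses that $h$ minimizes a quadratic energy with the same boundary values as $g$.

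The main work is to estimate $w_{i\bar j} := g_{i\bar j}-h_{i\bar j}$. From \eqref{eqn:gia} and the equation for $h$,
\[
g^{k\bar l}(x_0)\,\partial_k\bar\partial_l w_{i\bar j} = \big(g^{k\bar l}(x_0)-g^{k\bar l}(x)\big)\partial_k\bar\partial_l g_{i\bar j} + \big(\text{nonlinear gradient terms}\big) + \lambda\rho_0^2 g_{i\bar j} + h_{\tilde V,i\bar j}.
\]
Testing with $w$ (which vanishes on $\partial B_R(x_0)$) and integrating by parts in the first term on the right, I will use the a priori bounds $|g^{k\bar l}(x_0)-g^{k\bar l}(x)|\leq CR^\alpha$ and the maximum-principle bound $|w|\leq CR^\alpha$ (both solutions share boundary values and are uniformly bounded), together with Young's inequality and the ellipticity of $g^{k\bar l}(x_0)$, to absorb $\tfrac12\int|Dw|^2$ on the left. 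The quadratic-gradient term $|g\cdot g\cdot Dg\cdot Dg|\cdot|w|$ is handled directly by the $L^\infty$ bound $|w|\leq CR^\alpha$, and the lower-order forcing contributes $O(R^{2n+\alpha})$. The upshot is
\[
\int_{B_R(x_0)} |Dw|^2 \leq CR^\alpha \int_{B_R(x_0)} |Dg|^2 + CR^{2n+\alpha}.
\]

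Combining the two estimates via $|Dg|^2 \leq 2|Dh|^2 + 2|Dw|^2$ and setting $\phi(\rho):=\int_{B_\rho(x_0)}|Dg|^2$ yields the Campanato-type inequality
\[
\phi(\rho) \leq \Big[C\Big(\tfrac{\rho}{R}\Big)^{2n} + CR^\alpha\Big]\phi(R) + CR^{2n}
\]
valid for all $\rho\leq R\leq R_0$, with $R_0$ depending only on the listed data. Choosing $R_0$ small enough so that $CR_0^\alpha$ is below the threshold required by the standard iteration lemma (Giaquinta, Lemma III.2.1), I conclude that for any exponent $\mu<2n$,
\[
\phi(\rho)\leq C(\mu)\,\rho^\mu \qquad \text{for all } \rho\leq R_0, \; x_0\in B_{3/4}.
\]
Given any $\gamma\in(\alpha,1)$, choose $\mu = 2n-2+2\gamma<2n$; Morrey's integral characterization of H\"older continuity then gives $\|g_{i\bar j}\|_{C^\gamma(B_{3/4})}\leq C$, with $C$ depending only on $\sigma,\Lambda,\gamma$ and $\|g_{i\bar j}\|_{C^\alpha(B_1)}$, as required. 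The delicate step I expect to be the main obstacle is controlling the quadratic gradient nonlinearity in the $w$-energy estimate with a small enough coefficient in front of $\int|Dg|^2$; this is precisely where the a priori $C^\alpha$ bound on $g_{i\bar j}$ (via the $L^\infty$ bound on $w$) is essential, and it is also the reason we must settle for $\gamma<1$ rather than hope for $C^{1,\alpha}$ directly at this stage.
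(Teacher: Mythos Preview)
Your proposal is correct and follows essentially the same approach as the paper: freeze the leading coefficients, compare with the constant-coefficient harmonic replacement on $B_R(x_0)$, use the $C^\alpha$ bound together with the maximum principle to control $\|w\|_{L^\infty}\leq CR^\alpha$ in the energy estimate for $w$, and then run the Campanato/Morrey iteration to reach $C^\gamma$ for any $\gamma<1$. The only cosmetic differences are notation (the paper calls the harmonic replacement $v_{i\bar j}$ rather than $h_{i\bar j}$) and that the paper iterates by hand instead of invoking Giaquinta's Lemma III.2.1.
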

\begin{proof}
	For any $x_0\in B_{3/4}$ and $R\leq 1/4$, let $v_{i\bar{j}}$ be the solution of
	\begin{equation*}
		\left\{
			\begin{array}[]{ll}
				g^{k\bar{l}}(x_0) \partial_k \bar{\partial}_l v_{i\bar{j}} =0 & \quad \mbox{on} \quad B_R(x_0) \\
				v_{i\bar{j}}= g_{i\bar{j}} &\quad \mbox{on} \quad \partial B_R(x_0).
			\end{array}
			\right.
	\end{equation*}
	By Theorem 2.1 on page 78 of \cite{giaquinta1983multiple} (applied to $Dv_{i\bar{j}}$), there is a constant depending only on $\sigma$ such that for $0<\rho<R$,
	\begin{equation}\label{eqn:Dvv}
		\int_{B_{\rho}(x_0)} \abs{Dv_{i\bar{j}}}^2 \leq C(\frac{\rho}{R})^{2n} \int_{B_R(x_0)} \abs{Dv_{i\bar{j}}}^2.
	\end{equation}
	Setting $w_{i\bar{j}}=g_{i\bar{j}}-v_{i\bar{j}}$, we get (using \eqref{eqn:Dvv})
	\begin{eqnarray}\label{eqn:young2}
		&& \int_{B_{\rho}(x_0)} \abs{Dg_{i\bar{j}}}^2 \\ \nonumber
		&\leq & \int_{B_{\rho}(x_0)} \abs{Dv_{i\bar{j}}}^2+ \int_{B_{\rho}(x_0)} \abs{Dw_{i\bar{j}}}^2\\\nonumber
		&\leq& C(\frac{\rho}{R})^{2n} \int_{B_{R}(x_0)} \abs{Dv_{i\bar{j}}}^2 + C \int_{B_{\rho}(x_0)} \abs{Dw_{i\bar{j}}}^2 \\\nonumber
		&\leq& C(\frac{\rho}{R})^{2n} \int_{B_{R}(x_0)} \abs{Dg_{i\bar{j}}}^2 + C \int_{B_{R}(x_0)} \abs{Dw_{i\bar{j}}}^2.
	\end{eqnarray}
	Using the equation satisfied by $v_{i\bar{j}}$, we may rewrite \eqref{eqn:gia} as follows
	\begin{equation*}
		g^{k\bar{l}}(x_0) \partial_k\bar{\partial}_l (g_{i\bar{j}}-v_{i\bar{j}})= -(g^{k\bar{l}}-g^{k\bar{l}}(x_0))\partial_k\bar{\partial}_l g_{i\bar{j}} + g\cdot g\cdot D g \cdot D g + \lambda \rho_0^2 g + h.
	\end{equation*}
	Since $w_{i\bar{j}}$ vanishes on $\partial B_R(x_0)$, we can use it as the test function of the above equation to obtain
	\begin{eqnarray*}
		\int_{B_R(x_0)} \abs{Dw_{i\bar{j}}}^2 &\leq& C \int_{B_R(x_0)} \partial_k \bar{\partial}_l g_{i\bar{j}} \left( w (g^{k\bar{l}}-g^{k\bar{l}}(x_0)) \right) + \abs{w}\abs{D g}^2 + \abs{w} \\
		&\leq& C \int_{B_R(x_0)} \abs{D g}\abs{\nabla w} \abs{g^{k\bar{l}}-g^{k\bar{l}}(x_0)} + \abs{w}\abs{D g}^2 + \abs{w}.
	\end{eqnarray*}
	Using Young's inequality, we get
	\begin{equation}\label{eqn:young1}
		\int_{B_R(x_0)} \abs{Dw_{i\bar{j}}}^2 \leq C \int_{B_R(x_0)} \abs{D g}^2 (\abs{g^{k\bar{l}}-g^{k\bar{l}}(x_0)}^2 + \abs{w}) + \abs{w}.
	\end{equation}
	The maximum principle implies that $\osc_{B_R(x_0)} v_{i\bar{j}} \leq osc_{B_R(x_0)} g_{i\bar{j}}$, which implies that
	\begin{equation}\label{eqn:young3}
		\norm{w}_{C^0(B_R(x_0))}\leq \osc_{B_R(x_0)} v_{i\bar{j}}+ \osc_{B_R(x_0)} g_{i\bar{j}}\leq  C R^\alpha.
	\end{equation}
	Putting \eqref{eqn:young2}, \eqref{eqn:young1} and \eqref{eqn:young3} together yields the following decay estimate
	\begin{equation*}
		\int_{B_\rho(x_0)} \abs{Dg}^2 \leq C\left( (\frac{\rho}{R})^{2n} + R^\alpha \right) \int_{B_R(x_0)} \abs{Dg}^2 + C R^{2n+\alpha}.
	\end{equation*}
	Dividing both sides of the above equation by $\rho^{2n-2}$ and setting $\rho/R=\tau$ give
	\begin{equation*}
		\rho^{2-2n}\int_{B_\rho(x_0)} \abs{Dg}^2 \leq \left[ C (1+ \tau^{-2n} R^{\alpha}) \right] \tau^2 R^{2-2n}\int_{B_R(x_0)} \abs{Dg}^2 + C \tau^{2-2n} R^2.
	\end{equation*}
	By picking $\tau$ small so that $2C \tau^{(2-2\gamma)}=1$ and then $R_1$ small so that $1+\tau^{-n}R^\alpha<2$ for all $R<R_1$, we have 
	\begin{equation*}
		\rho^{2-2n}\int_{B_\rho(x_0)} \abs{Dg}^2 \leq \tau^{2\gamma} R^{2-2n}\int_{B_R(x_0)} \abs{Dg}^2 + C(\gamma) R^2
	\end{equation*}
	for $R<R_1$. From here, a routine iteration shows that
	\begin{equation*}
		\rho^{2-2n}\int_{B_\rho(x_0)} \abs{Dg}^2 \leq C \rho^{2\gamma}.
	\end{equation*}
	Hence, by the H\"older inequality and the equivalence between the Companato space and the H\"older space, we have
	\begin{equation*}
		\norm{g_{i\bar{j}}}_{C^\gamma(B_{3/4})}\leq C(\sigma,\Lambda,\gamma, \norm{g_{i\bar{j}}}_{C^\alpha(B_1)}).
	\end{equation*}
\end{proof}

Finally, we prove the $C^{1,\alpha}$ estimate.
\begin{lem}\label{lem:gia}
	For $g_{i\bar{j}}$ as in Lemma \ref{lem:energy}, there exists some $\alpha'\in (0,1)$ such that 
	\begin{equation*}
		\norm{g_{i\bar{j}}}_{C^{1,\alpha'}(B_{1/2})} \leq C(\sigma,\Lambda,\norm{g_{i\bar{j}}}_{C^\alpha(B_1)}).
	\end{equation*}
\end{lem}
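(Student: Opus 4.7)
The plan is to promote the $C^\gamma$ estimate of Lemma \ref{lem:gia2} to a $C^{1,\alpha'}$ estimate by the standard Campanato/Morrey iteration based on freezing coefficients, this time measuring the oscillation of $Dg_{i\bar{j}}$ instead of just its $L^2$ norm. Fix any $\gamma \in (2/3,1)$ and invoke Lemma \ref{lem:gia2} to secure a $C^\gamma(B_{3/4})$ bound on $g_{i\bar{j}}$; in particular, the Caccioppoli-type inequality behind the proof of Lemma \ref{lem:gia2} actually yields a Morrey decay
\begin{equation*}
\int_{B_R(x_0)} |Dg_{i\bar{j}}|^2 \leq C R^{2n-2+2\gamma}
\quad\text{for all } x_0\in B_{3/4},\ R\leq 1/4.
\end{equation*}

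As in the proof of Lemma \ref{lem:gia2}, for each $x_0$ and $R\leq 1/4$ I introduce the constant-coefficient comparison $v_{i\bar{j}}$ solving $g^{k\bar{l}}(x_0)\partial_k\bar\partial_l v_{i\bar{j}}=0$ on $B_R(x_0)$ with boundary values $g_{i\bar{j}}$. For such $v$, the standard higher Campanato estimate for constant-coefficient elliptic systems (applied to $Dv$) gives
\begin{equation*}
\int_{B_\rho(x_0)} |Dv - (Dv)_{B_\rho}|^2 \leq C\Big(\frac{\rho}{R}\Big)^{2n+2} \int_{B_R(x_0)} |Dv - (Dv)_{B_R}|^2.
\end{equation*}
Writing $w_{i\bar{j}}=g_{i\bar{j}}-v_{i\bar{j}}$ and using this together with the triangle inequality and the Dirichlet energy comparison $\int_{B_R}|Dv|^2\leq \int_{B_R}|Dg|^2$ yields
\begin{equation*}
\int_{B_\rho(x_0)} |Dg - (Dg)_{B_\rho}|^2 \leq C\Big(\frac{\rho}{R}\Big)^{2n+2}\!\!\int_{B_R(x_0)}\!\! |Dg - (Dg)_{B_R}|^2 + C\int_{B_R(x_0)} |Dw|^2.
\end{equation*}

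The heart of the argument is the estimate of $\int_{B_R}|Dw|^2$. Since $w$ vanishes on $\partial B_R(x_0)$, I test the equation
\begin{equation*}
g^{k\bar{l}}(x_0)\partial_k\bar\partial_l w_{i\bar{j}} = -(g^{k\bar{l}}-g^{k\bar{l}}(x_0))\partial_k\bar\partial_l g_{i\bar{j}} + g\cdot g\cdot Dg\cdot Dg + \lambda\rho_0^2 g + h
\end{equation*}
against $w$ and integrate by parts. Using (i) $|g^{k\bar{l}}-g^{k\bar{l}}(x_0)|\leq CR^\gamma$ from the $C^\gamma$ bound, (ii) $\|w\|_{C^0(B_R)}\leq CR^\gamma$ from the maximum principle for $v$ applied to $g-v$, (iii) the Morrey decay $\int_{B_R}|Dg|^2\leq CR^{2n-2+2\gamma}$, and Young's inequality to absorb $\int|Dw|^2$, I expect
\begin{equation*}
\int_{B_R(x_0)}|Dw|^2 \leq C R^{2n-2+3\gamma}.
\end{equation*}
Substituting this back and setting $\phi(\rho):=\int_{B_\rho}|Dg-(Dg)_{B_\rho}|^2$ gives the classical decay inequality
\begin{equation*}
\phi(\rho)\leq C\big[(\rho/R)^{2n+2}+\varepsilon\big]\phi(R) + CR^{2n-2+3\gamma},
\end{equation*}
to which the standard iteration lemma (e.g. Lemma~2.1, Ch.~III of \cite{giaquinta1983multiple}) applies and yields $\phi(\rho)\leq C\rho^{2n-2+3\gamma}$, hence $Dg_{i\bar{j}}\in C^{\alpha'}$ for any $\alpha'<(3\gamma-2)/2$ by the Campanato isomorphism, with the required dependence of constants.

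The main technical obstacle is bookkeeping: the term $\int(g^{k\bar{l}}-g^{k\bar{l}}(x_0))\partial_k\bar\partial_l g \cdot w$ contains a second derivative of $g$ that must be removed by integration by parts, producing cross terms that have to be absorbed by Young's inequality and reabsorbed using the just-established Morrey decay of $Dg$. One must be careful that the exponent $3\gamma$ gained this way exceeds $2$, which forces the preliminary choice $\gamma>2/3$ in Lemma \ref{lem:gia2}; this is the only delicate quantitative point in the argument.
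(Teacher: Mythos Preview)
Your proposal is correct and follows essentially the same route as the paper's proof: freeze coefficients, use the Campanato decay for $Dv$ of the constant-coefficient comparison, test against $w=g-v$, feed in the $C^\gamma$ bound and the Morrey decay $\int_{B_R}|Dg|^2\leq CR^{2n-2+2\gamma}$ already obtained in the proof of Lemma \ref{lem:gia2}, and iterate. The only cosmetic differences are that you make the threshold $\gamma>2/3$ and the resulting $\alpha'<(3\gamma-2)/2$ explicit, and you include a superfluous $\varepsilon\,\phi(R)$ term in the iteration inequality (here $\int_{B_R}|Dw|^2$ is bounded directly by $CR^{2n-2+3\gamma}$ with no $\phi(R)$ contribution, so $\varepsilon=0$).
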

\begin{proof}
	For any $x_0\in B_{1/2}$ and $R\leq 1/4$, as in the proof of Lemma \ref{lem:gia2}, let $v_{i\bar{j}}$ be the solution of
	\begin{equation*}
		\left\{
			\begin{array}[]{ll}
				g^{k\bar{l}}(x_0) \partial_k \bar{\partial}_l v_{i\bar{j}} =0 & \quad \mbox{on} \quad B_R(x_0) \\
				v_{i\bar{j}}= g_{i\bar{j}} &\quad \mbox{on} \quad \partial B_R(x_0).
			\end{array}
			\right.
	\end{equation*}
	Again, applying Theorem 2.1 on page 78 of \cite{giaquinta1983multiple} to $Dv_{i\bar{j}}$, we get a constant depending only on $\sigma$ such that
	\begin{equation}\label{eqn:Dv}
		\int_{B_{\rho}(x_0)} \abs{Dv_{i\bar{j}}- (Dv_{i\bar{j}})_{x_0,\rho}}^2 \leq C(\frac{\rho}{R})^{2n+2} \int_{B_R(x_0)} \abs{Dv_{i\bar{j}}-(Dv_{i\bar{j}})_{x_0,R}}^2.
	\end{equation}
	Here $(u)_{x,r}$ means the average of $u$ in the ball $B_r(x)$.
	
	Next, we set $w_{i\bar{j}}=g_{i\bar{j}}-v_{i\bar{j}}$ on $B_R(x_0)$. Triangle inequalities and \eqref{eqn:Dv} imply that
	\begin{eqnarray}\label{eqn:decay2}
		&& \int_{B_{\rho}(x_0)} \abs{Dg_{i\bar{j}}- (Dg_{i\bar{j}})_{x_0,\rho}}^2 \\ \nonumber
		&\leq & \int_{B_{\rho}(x_0)} \abs{Dv_{i\bar{j}}- (Dv_{i\bar{j}})_{x_0,\rho}}^2+ \int_{B_{\rho}(x_0)} \abs{Dw_{i\bar{j}}- (Dw_{i\bar{j}})_{x_0,\rho}}^2\\ \nonumber
		&\leq& C(\frac{\rho}{R})^{2n+2} \int_{B_{R}(x_0)} \abs{Dv_{i\bar{j}}- (Dv_{i\bar{j}})_{x_0,R}}^2 + C \int_{B_{\rho}(x_0)} \abs{Dw_{i\bar{j}}}^2 \\ \nonumber
		&\leq& C(\frac{\rho}{R})^{2n+2} \int_{B_{R}(x_0)} \abs{Dg_{i\bar{j}}- (Dg_{i\bar{j}})_{x_0,R}}^2 + C \int_{B_{R}(x_0)} \abs{Dw_{i\bar{j}}}^2.
	\end{eqnarray}
	Using $w_{i\bar{j}}$ as the test function of \eqref{eqn:gia} as in the proof of Lemma \ref{lem:gia2} gives
	\begin{eqnarray*}
		\int_{B_R(x_0)} \abs{Dw_{i\bar{j}}}^2 &\leq& C \int_{B_R(x_0)} \partial_k \bar{\partial}_l g_{i\bar{j}} \left( w (g^{k\bar{l}}-g^{k\bar{l}}(x_0)) \right) + \abs{w}\abs{D g}^2 + \abs{w} \\
		&\leq& C \int_{B_R(x_0)} \abs{D g}\abs{\nabla w} \abs{g^{k\bar{l}}-g^{k\bar{l}}(x_0)} + \abs{w}\abs{D g}^2 + \abs{w}.
	\end{eqnarray*}
	The Young's inequality and the fact that $g_{i\bar{j}}$ lies in $C^\gamma(B_{3/4})$ imply that
	\begin{eqnarray*}
		\int_{B_R(x_0)} \abs{Dw}^2 &\leq& C R^{\gamma} \int_{B_R(x_0)} \abs{D g}^2    + CR^{2n+\gamma}.
	\end{eqnarray*}
	Notice that we have proved (see \eqref{eqn:bettercalpha})
	\begin{equation*}
		\int_{B_R(x_0)} \abs{Dg}^2 \leq C R^{2n-2+2\gamma}
	\end{equation*}
	in the proof of Lemma \ref{lem:gia2}. By Lemma \ref{lem:gia2}, we can choose and fix $\gamma$ so that
	\begin{equation}\label{eqn:decay1}
		\int_{B_R(x_0)}\abs{Dw}^2 \leq C R^{2n+\gamma'}
	\end{equation}
	for some $\gamma'>0$ (small).
	Combining \eqref{eqn:decay2} and \eqref{eqn:decay1}, we get
	\begin{equation*}
		\rho^{-2n}\int_{B_{\rho}(x_0)} \abs{Dg_{i\bar{j}}- (Dg_{i\bar{j}})_{x_0,\rho}}^2 \leq C(\frac{\rho}{R})^2 R^{-2n} \int_{B_R(x_0)}\abs{Dg_{i\bar{j}}- (Dg_{i\bar{j}})_{x_0,\rho}}^2 + C(R/\rho)^{2n} R^{\gamma'}.
	\end{equation*}
	As before, picking $\tau\in (0,1)$ with $C \tau^{2-\gamma'}=1$ and setting $\rho=\tau R$, we get
	\begin{equation*}
		\rho^{-2n}\int_{B_{\rho}(x_0)} \abs{Dg_{i\bar{j}}- (Dg_{i\bar{j}})_{x_0,\rho}}^2 \leq \tau^{\gamma'} R^{-2n} \int_{B_R(x_0)}\abs{Dg_{i\bar{j}}- (Dg_{i\bar{j}})_{x_0,\rho}}^2 + C(\tau) R^{\gamma'}.
	\end{equation*}
	Iteration again implies that 
	\begin{equation*}
		\int_{B_{\rho}(x_0)} \abs{Dg_{i\bar{j}}- (Dg_{i\bar{j}})_{x_0,\rho}}^2 \leq C \rho^{2n+\gamma'},
	\end{equation*}
	which concludes the proof of the lemma by Theorem 1.2 in Chapter III of \cite{giaquinta1983multiple}.
\end{proof}

\bibliography{foo}
\bibliographystyle{plain}

\end{document}